\documentclass[11pt]{article}
\usepackage[margin=0.75in]{geometry}

\usepackage{amssymb,amsmath,mathtools,amsthm,thmtools,amsfonts}

\usepackage{comment}
\usepackage{thm-restate}
\usepackage{ dsfont }

\usepackage{url} 
\usepackage{hyperref}
\hypersetup{
colorlinks,
linkcolor={red}, 
citecolor={black},
urlcolor={blue!60!black},
pdftitle={A Combinatorial Proof of Hilton’s Conjecture and Beyond},
pdfauthor={Thomas Lesgourgues, Luke Postle}}

\usepackage[utf8]{inputenc}
\usepackage{color}


\usepackage{enumitem}
\setlist[itemize]{topsep=.1in,itemsep=.1in}
\setlist[enumerate]{topsep=.1in,itemsep=.1in}
\setlist[enumerate,1]{label={(\roman*)}}
\setlist[enumerate,2]{label={(\alph*)}}
\setlist[enumerate,3]{label={(\arabic*)}}

\setlist[itemize]{nolistsep,noitemsep, topsep=0pt}



\newcommand{\eps}{\varepsilon}
\newcommand{\E}{\mathbb{E}}

\newcommand{\mc}[1]{\mathcal{#1}}%


\newcommand{\cF}{\ensuremath{\mathcal F}}

\newcommand{\cH}{\ensuremath{\mathcal H}}

\newcommand{\cQ}{\ensuremath{\mathcal Q}}

\newcommand{\cS}{\ensuremath{\mathcal S}}

\DeclareMathOperator{\on}{\mc{O}{\rm n}}
\DeclareMathOperator{\off}{\mc{O}{\rm ff}}
\DeclareMathOperator{\hleft}{\mc{L}}
\DeclareMathOperator{\hright}{\mc{R}}

\newcommand{\Expect}[1]{\ensuremath{\mathbb E\left[#1\right]}}
\newcommand{\Prob}[1]{\ensuremath{\mathbb P\left[#1\right]}}

\DeclarePairedDelimiter{\parens}{(}{)}
\DeclarePairedDelimiter{\set}{\{}{\}}
\DeclarePairedDelimiter{\brackets}{[}{]}



\newcommand{\TS}{\text{Tur{\'a}nSpace}}

\usepackage[noabbrev,capitalise]{cleveref}

\theoremstyle{plain}
\newtheorem{thm}{Theorem}[section]
\newtheorem{lem}[thm]{Lemma}
\newtheorem{proposition}[thm]{Proposition}

\newtheorem{cor}[thm]{Corollary}

\newtheorem{conj}[thm]{Conjecture}





\newenvironment{lateproof}[1]
 {%
  \begin{proof}[Proof of~\cref{#1}]%
 }
 {\end{proof}}

\usepackage{etoolbox}
 \AtEndEnvironment{proof}{\setcounter{claim}{0}}


\theoremstyle{plain} 
\newcommand{\thistheoremname}{}
\newtheorem{genericthm}{\thistheoremname}

\theoremstyle{definition}
\newtheorem{definition}[thm]{Definition}

\newtheorem{remark}[thm]{Remark}

\crefname{equation}{}{}
\crefname{lem}{Lemma}{Lemmas}
\crefname{claim}{Claim}{Claims}
\crefname{thm}{Theorem}{Theorems}
\crefname{enumi}{}{}

\usepackage{thm-restate}


\usepackage[dvipsnames, table]{xcolor}



\newcommand{\Erdos}{Erd\H{o}s }
\newcommand{\Kuhn}{K\"{u}hn}

\newcommand{\Turan}{Tur{\'a}n }


\title{On the Hypergraph Nash-Williams' Conjecture}

\author{
Cicely (Cece) Henderson
\thanks{Combinatorics and Optimization Department,
University of Waterloo, Waterloo, Ontario N2L 3G1, Canada {\tt c3hender@uwaterloo.ca}.}
\and
Luke Postle
\thanks{Combinatorics and Optimization Department,
University of Waterloo, Waterloo, Ontario N2L 3G1, Canada {\tt lpostle@uwaterloo.ca}. Partially supported by NSERC
under Discovery Grant No. 2019-04304.}}
\date{\today}


\begin{document}

\maketitle

\begin{abstract} 
In 2014, as part of his famous resolution of the Existence Conjecture for Combinatorial Designs, Keevash proved the existence of $(n,q,r)$-Steiner systems (equivalently $K_q^r$-decompositions of $K_n^r$) for all large enough $n$ satisfying the necessary divisibility conditions. In 2021, Glock, K\"uhn, and Osthus proposed a generalization of this result. Namely they conjectured a hypergraph version of Nash-Williams' Conjecture positing that if a $K_q^r$-divisible $r$-graph $G$ on $n$ vertices has minimum $(r-1)$-degree (denoted $\delta(G)$ hereafter) at least $\left(1-\Theta_r\left(\frac{1}{q^{r-1}}\right)\right) \cdot n$, then $G$ admits a $K_q^r$-decomposition.

The best known progress on this conjecture dates to the second proof of the Existence Conjecture by Glock, K\"uhn, Lo, and Osthus wherein they showed that $\delta(G)\ge \left(1-\frac{c}{q^{2r}}\right)\cdot n$ suffices for large enough $n$, where $c$ is a constant depending on $r$ but not $q$. More is known for the fractional relaxation. In 2017, Barber, K\"uhn, Lo, Montgomery and Osthus proved that $\delta(G)\ge \left(1-\frac{c}{q^{2r-1}}\right) \cdot n$ guarantees a fractional $K_q^r$-decomposition. Quite recently, Delcourt, Lesgourgues, and the second author improved this fractional bound to $\left(1-\frac{c}{q^{r-1 + o(1)}}\right)\cdot n$.

We prove that for every integer $r\ge 2$, there exists a real $c>0$ such that if a $K_q^r$-divisible $r$-graph $G$ satisfies $\delta(G)\ge \max\left\{ \delta_{K_q^r}^* + \eps,~~1 -\frac{c}{\binom{q}{r-1}} \right\} \cdot n$, then $G$ admits a $K_q^r$-decomposition for all large enough $n$, where $\delta_{K_q^r}^*$ denotes the fractional $K_q^r$-decomposition threshold. Combined with the fractional result above, this proves that $\left(1-\frac{c}{q^{r-1 + o(1)}}\right)\cdot n$ suffices for the Hypergraph Nash-Williams' Conjecture, approximately confirming the correct order of $q$. Our proof uses the newly developed method of refined absorption; we also develop a non-uniform Tur\'an theory to prove the existence of many embeddings of absorbers which may be of independent interest. 

\end{abstract}

\section{Introduction}
The study of combinatorial designs includes some of the oldest questions at the heart of combinatorics. In one of the first, most fundamental results in the area, Kirkman \cite{KirkmanSTS} proved in 1847 that there exists a set of triples of an $n$-set $X$ such that every pair in $X$ is in exactly one triple if and only if $n$ satisfies the necessary conditions of $3~|~\binom{n}{2}$ and $2~|~(n-1)$ (equivalently, $n \equiv 1, 3 \mod 6$);
such a set is called a \emph{Steiner Triple System}. From the graph-theoretic perspective, this is equivalent to a decomposition of the edges of the complete graph $K_n$ into edge-disjoint triangles. More generally, given integers $n > q > r \geq 1$, an $(n,q,r)$\textit{-Steiner System} is a set $S$ of $q$-subsets of an $n$-set $X$ such that each $r$-subset of $X$ is contained in exactly one element of $S$. From the hypergraph-theoretic perspective, an $(n,q,r)$-Steiner system is equivalent to a decomposition of the edges of $K_n^r$ (the complete $r$-uniform hypergraph on $n$ vertices) into edge-disjoint copies of $K_q^r$ (referred to as cliques). As before, for fixed values of $q$ and $r$, there are natural divisibility conditions on $n$ for the existence of an $(n, q, r)$-Steiner System: for each $0 \leq i \leq r-1,$ one requires that $\binom{q-i}{r-i}~|~\binom{n-i}{r-i}$.

Since the mid-1800s, the central question in design theory was the infamous Existence of Combinatorial Designs Conjecture. An important special case of the conjecture posited that a sufficiently large, divisible complete hypergraph $K_n^r$ always admits a $K_q^r$-decomposition as follows.

\begin{conj}[Existence Conjecture for Steiner Systems]
    For all integers $q > r \ge 1$, if $n$ is sufficiently large and $\binom{q-i}{r-i}~|~\binom{n-i}{r-i}$ for each $0 \leq i \leq r-1,$ then an $(n, q, r)$-Steiner system exists. 
\end{conj}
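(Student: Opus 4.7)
The natural approach is the absorption method. The plan is to reserve a small random subhypergraph $A \subseteq K_n^r$, called an absorber, chosen so that $A$ is $K_q^r$-divisible and has the robust property that for every sufficiently small and $K_q^r$-divisible leftover $L \subseteq K_n^r \setminus A$, the union $A \cup L$ admits a $K_q^r$-decomposition. Assuming such an $A$ exists, I would apply a R\"odl-nibble-type $K_q^r$-packing to $K_n^r \setminus A$ to cover all but an $o(n^r)$ leftover $L^*$; verifying that $L^*$ is $K_q^r$-divisible (which follows from the nibble covering whole cliques together with the hypothesized divisibility of $K_n^r$) then lets us absorb $L^*$ into $A$, producing the desired Steiner system.

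The heart of the argument is building the absorber, which I would do in two layers. The inner layer consists of \emph{transformers}: for every ``atomic'' leftover configuration $e$ from a prescribed family $\mathcal{F}$, a hypergraph $T_e$ on $O_{q,r}(1)$ vertices such that both $T_e$ and $T_e \mathbin{\triangle} e$ admit $K_q^r$-decompositions. Such a gadget enables an exchange operation that trades $e$ for a decomposable structure. The outer layer glues these together: randomly and disjointly embed many representatives of each $T_e$ for $e \in \mathcal{F}$ inside $K_n^r$, so that every potential leftover edge has a reservoir of candidate transformers, and then use Talagrand-type concentration to verify the resulting $A$ is robust.

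To ensure the nibble leaves a leftover supported in $\mathcal{F}$, I would run an \emph{iterative absorption} scheme on a nested sequence of vortices $V(K_n^r) \supseteq V_1 \supseteq \cdots \supseteq V_t$; at level $i$, the nibble together with an intermediate absorber confines the residual to $V_{i+1}$, shrinking it geometrically until the terminal absorber $A$ built on $V_t$ can finish the job. At each step, one must check that the working hypergraph inherits the divisibility condition modulo the cliques already removed, and that the minimum codegree stays high enough to feed into both the nibble and the intermediate absorbers.

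The main obstacle will be constructing the transformers: one must exhibit them for every $e \in \mathcal{F}$ while respecting the strict divisibility constraints $\binom{q-i}{r-i} \mid \binom{n-i}{r-i}$, and then accurately count $K_q^r$-embeddings of each $T_e$ to support the random gluing. This embedding-counting step is precisely what the ``non-uniform Tur\'an theory'' advertised in the abstract appears designed to handle, and it is particularly delicate for $r \ge 3$, where both the divisibility bookkeeping and the extremal counts become genuinely multi-parameter.
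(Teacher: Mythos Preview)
The paper does not contain a proof of this statement. The Existence Conjecture is stated purely as historical background: it was resolved by Keevash~\cite{keevashEC} (and later reproved by Glock--K\"uhn--Lo--Osthus~\cite{GKLO16} and Delcourt--Postle~\cite{DPI}), and the paper simply cites these works. The paper's own contribution is Theorem~\ref{thm:main} on the Hypergraph Nash-Williams' setting, and it treats the Existence Conjecture as a black box via Theorem~\ref{thm:RefinedEfficientOmniAbsorber}.

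Your sketch is a reasonable high-level outline of the iterative-absorption approach of~\cite{GKLO16}, but your final paragraph contains a misreading of the paper. The non-uniform Tur\'an theory developed here is \emph{not} designed for the Existence Conjecture; it is needed specifically because in the Nash-Williams' setting the host $G$ has minimum codegree only $(1-\Theta_r(1/q^{r-1}))n$, which is too low to embed absorbers of rooted degeneracy $\Theta_r(q^{2r-2})$ one vertex at a time. When $G=K_n^r$, every $(r-1)$-set has full degree and embedding any bounded-size gadget is trivial via rooted degeneracy, so no Tur\'an-type counting is required. The genuine difficulties in proving the Existence Conjecture lie in constructing the absorbers/transformers and in the divisibility bookkeeping through the vortex, which your sketch identifies but does not address beyond naming them.
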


More generally, the Existence Conjecture posited the existence of an \emph{$(n,q,r,\lambda)$-design} where each $r$-subset is in $\lambda$ elements of $S$ for some $\lambda\ge 1$; that said, the $\lambda=1$ case was viewed as the hardest case by far. Nearly two centuries after it entered the literature as folklore, it was resolved in the affirmative by Keevash~\cite{keevashEC}.
Throughout the 19th and 20th centuries, several individuals made significant progress on special cases of the conjecture. In particular, Wilson~\cite{WilsonCasesI, WilsonCasesII, WilsonCasesIII} revolutionized design theory in the 1970s when he completed the graph case by proving existence for all values of $q$ when $r = 2$ in a series of papers. 
In 1985 R{\"o}dl \cite{Nibble} introduced the celebrated nibble method, featured in this paper, in order to settle the approximate version of the problem, also known as the Erd{\H o}s-Hanani conjecture \cite{ApproxExistenceConj}. Until 2014 though, no $(n,q,r)$-Steiner systems were known to exist for any $r > 5$.

Thus it was only in the last decade that the Existence Conjecture was solved by Keevash~\cite{keevashEC}. Later, several proofs emerged, each using different techniques. Keevash's original 2014 argument uses \textit{randomized algebraic constructions}. In 2016, Glock, K{\"u}hn, Lo, and Osthus~\cite{GKLO16} gave a purely combinatorial proof of the Existence Conjecture via \textit{iterative absorption}. Both approaches have different benefits and each led to subsequent progress on various problems. In 2024, Delcourt and the second author~\cite{DPI} developed the methodology of \textit{refined
absorption}, giving a one-step, combinatorial proof of the Existence Conjecture. Later in 2024, Keevash~\cite{KeevashShortEC} 
provided a more concise proof that also uses the new refined absorption framework.

\subsection{History of Nash-Williams' Conjecture}\label{Sec:NW}

Given the resolution of the Existence Conjecture, a natural question is when a partial Steiner system (i.e., a set of edge-disjoint cliques) can be extended to a Steiner system. More generally, one may wonder when \emph{nearly} complete graphs admit a decomposition, provided they are divisible. Since the Existence Conjecture was only recently resolved in full while the $r = 2$ case has been settled since Wilson's~\cite{WilsonCasesI, WilsonCasesII, WilsonCasesIII} work in the 1970s, graph decompositions of non-complete graphs are the most well-studied.
A graph $G$ is $K_q$\textit{-divisible} if $\binom{q}{2}\;|\;e(G)$ and $(q-1)\;|\;\deg(v)$ for every vertex $v \in V(G)$. However, if $G$ is not complete, divisibility is not enough to guarantee decomposability. For example, $C_6$ is $K_3$-divisible but not $K_3$-decomposable.

When considering designs in a non-complete graph $G$, the higher the minimum degree is as a proportion of the number of vertices, the ``closer'' $G$ is to being complete and hence addressed by the $r=2$ case of the Existence Conjecture. Therefore, a natural question is whether a high minimum degree in a divisible graph is enough to guarantee the existence of a decomposition. Let $\delta_{K_q}$ denote the \textit{$K_q$-decomposition threshold}, defined as $\lim\sup_{n\rightarrow \infty} \delta_{K_q}(n)/n$ where $\delta_{K_q}(n)$ is the smallest integer $d$ such that every $K_q$-divisible graph $G$ on $n$ vertices with $\delta(G)\ge d$ has a $K_q$-decomposition. There is an analogous definition for the $K_q^r$-decomposition threshold for any choice of $q > r \ge 1$. Nash-Williams' Conjecture~\cite{NWConj} from 1970 is a central open question in extremal design theory that addresses the case of triangle decompositions.

\begin{conj}[Nash-Williams' Conjecture \cite{NWConj}]
    If $G$ is a sufficiently large $K_3$-divisible graph and $\delta(G) \geq \frac{3}{4}v(G)$, then $G$ admits a triangle decomposition.
\end{conj}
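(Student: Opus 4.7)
The plan is to adapt the refined absorption framework of Delcourt and Postle to the graph case with the tight minimum degree hypothesis $\delta(G)\ge \frac{3}{4} v(G)$. The overall strategy is a three-step reduction: (a) construct a small absorber structure $A\subseteq G$ capable of swallowing an arbitrary sparse $K_3$-divisible leftover; (b) apply R{\"o}dl's nibble to $G-E(A)$ to cover almost all edges with edge-disjoint triangles; (c) use $A$ to absorb the sparse leftover, completing the $K_3$-decomposition.

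For step (a), the first task is to build, for each ``triangle-type'' leftover configuration, a constant-sized gadget $A_T$ that is $K_3$-decomposable both with and without a designated triangle $T$; chaining such gadgets via the refined absorption template yields a single absorber that handles any sparse $K_3$-divisible subgraph. To embed many vertex-disjoint copies of these gadgets into $G$, I would invoke the non-uniform Tur{\'a}n-type counting results developed in this paper: at minimum degree $\frac{3}{4} n$, every pair of adjacent vertices has $\Omega(n)$ common neighbors, so supersaturation arguments should yield the required embedding count, and a nibble-with-alterations random embedding produces the desired family of absorbers.

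For step (b), R{\"o}dl's nibble requires a good fractional $K_3$-decomposition of (subgraphs of) $G-E(A)$. Concretely, one needs the fractional $K_3$-decomposition threshold $\delta_{K_3}^*$ to satisfy $\delta_{K_3}^* \le \frac{3}{4}$; the bound of Delcourt, Lesgourgues, and Postle of $1-c/3^{1+o(1)}$ is tantalizingly close but insufficient. I would attempt to push the fractional threshold down to $\frac{3}{4}$ by an LP-duality argument exploiting the rigid structure of the extremal examples at this threshold (essentially two cliques of size $n/2$ glued along a near-perfect matching), perhaps combined with the weighted edge-coloring ideas that powered the fractional improvements of Barber, K{\"u}hn, Lo, Montgomery, and Osthus.

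The main obstacle will be step (b): establishing the exact $\frac{3}{4}$ fractional threshold for triangles. Step (a) appears tractable using the non-uniform Tur{\'a}n machinery announced in the abstract, since $\frac{3}{4} n$ lies well above the extremal threshold for embedding any fixed constant-sized absorber. By contrast, the fractional threshold for triangles is a long-standing problem on its own, and bridging the gap between $1-c/3^{1+o(1)}$ and exactly $\frac{3}{4}$ seems to require a genuinely new structural or variational insight tailored to triangles, rather than a general $K_q^r$ argument.
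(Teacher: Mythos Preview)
The statement you are attempting to prove is Nash-Williams' Conjecture, which the paper explicitly presents as an \emph{open conjecture}, not as a theorem it proves. The paper states that the current state-of-the-art is $\delta_{K_3} \le \frac{7+\sqrt{21}}{14} \approx 0.82733$, obtained by combining the Barber--K\"uhn--Lo--Osthus reduction to the fractional threshold with the Delcourt--Postle bound $\delta_{K_3}^* \le \frac{7+\sqrt{21}}{14}$. There is no proof in the paper for you to compare against.

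You have correctly diagnosed exactly why your plan cannot be completed: step (b) requires $\delta_{K_3}^* \le \frac{3}{4}$, and this is precisely the missing ingredient that keeps Nash-Williams' Conjecture open. The paper's main theorem (Theorem~\ref{thm:main}) is of the form $\delta_{K_q^r} \le \max\{\delta_{K_q^r}^* + \varepsilon,\ 1 - c/\binom{q}{r-1}\}$, which for $q=3$, $r=2$ reduces to essentially the same conditional statement you outline: the absorption and embedding machinery works comfortably at $\frac{3}{4}n$, but the fractional threshold is the bottleneck. Your proposed LP-duality attack on $\delta_{K_3}^*$ is not a proof sketch but a research program; the paper offers no new idea in that direction either. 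So your proposal is an accurate summary of the known reduction, together with an honest admission that the decisive step remains unsolved.
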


In an addendum to Nash-Williams' paper, Graham provided a construction showing that this minimum degree threshold would be tight, and and hence $\delta_{K_3} \ge 3/4.$ 

Nash-Williams' Conjecture is at the heart of extremal combinatorics as it is firmly situated inside a larger framework that encompasses many of the principal theorems in the field. In particular, the threshold for the number of edges needed to guarantee even one triangle is a famous classical result widely attributed to Mantel~\cite{mantel} from 1907 who showed that every graph on $n$ vertices with at least $\lfloor \frac{n^2}{4} \rfloor +1$ edges contains a triangle (which is tight). In a seminal result from 1941, Tur{\'a}n~\cite{turan1941extremal} generalized this by determining the exact number of edges needed to guarantee the existence of a $K_q$ in a graph - roughly around $\big(1-\frac{1}{q-1}\big)n$. This became known as Tur\'an's theorem, the numbers as Tur\'an numbers, and launched an entire subfield of extremal combinatorics known as Tur\'an theory.

Furthermore, there is a rich history on the study of density thresholds for the existence of a collection of vertex-disjoint triangles spanning all vertices (called a \emph{triangle factor}). In this case, as with decomposition thresholds, it is natural to impose a minimum degree condition in order to prevent the existence of isolated vertices. In 1963, Corr{\'a}di and Hajnal~\cite{CH63} famously resolved the question of the minimum degree threshold for a triangle factor by showing that any graph $G$ on $n$ vertices with $3 ~|~n$ (a necessary divisibility condition) and $\delta(G) \ge \frac{2}{3}\cdot n$ contains a triangle factor. As for the minimum degree necessary to force the existence of a \emph{$K_q$-factor} (a collection of vertex-disjoint copies of $K_q$ spanning all vertices), the Hajnal-Szemer\'edi Theorem~\cite{HS70} from 1970 implies that any graph $G$ on $n$ vertices with $q ~|~n$ and and $\delta(G) \ge \frac{q-1}{q}\cdot n$ contains a $K_q$-factor.

Nash-Williams' Conjecture extends this deeply rooted lineage. Most directly, the existence of a $K_3$-subgraph from Mantel's Theorem is a necessary condition for both the existence of a triangle factor as in the Corr{\'a}di- Hajnal Theorem and for the existence of a triangle decomposition as in Nash-Williams' Conjecture. Yet there is also a more fundamental perspective that unites these results: for each of the above theorems, there exists an $i \in \{0, 1, 2\}$ such that the result provides the minimum density threshold to guarantee there exist a collection of $K_q$-subgraphs of a divisible graph $G$ such that every $i$-set of $G$ is in exactly one element in the set (where we view $G$ as a simplicial complex). Since a copy of $K_q$ contains the empty set, Tur\'an's theorem answers the $i = 0$ case, while the Hajnal-Szemer\'edi Theorem answers the $i = 1$ case. Nash-Williams' Conjecture thus extends this natural ``simplicial complex'' question to the case when $q = 3$ and $i = 2$. This notion also generalizes to collections of $K_q^r$-subgraphs in $r$-uniform hypergraphs for all $i \in [r]_0$ which we discuss in more detail later.

Nash-Williams' Conjecture is also a generalization of yet another fundamental graph theory result: in 1952, Dirac \cite{DiracThm} showed that if $G$ is a graph with at least three vertices and $\delta(G) \ge \frac{1}{2}v(G)$, then $G$ is Hamiltonian. From a different perspective, this implies that if $G$ satisfies the necessary divisibility conditions for the existence of a perfect matching (i.e., $v(G)$ is even) and $\delta(G) \ge \frac{1}{2}v(G)$, then $G$ has a perfect matching. Since a perfect matching is equivalent to a $(n, 2, 1)$-Steiner System, we can reframe Dirac's Theorem as a minimum degree threshold for which a necessary condition for the existence of a $(n, 2, 1)$-Steiner System becomes sufficient. Nash-Williams' Conjecture generalizes this notion to a $(n, 3, 2)$-Steiner System. 

The state-of-the-art for Nash-Williams' Conjecture is that $\delta_{K_3} \le \frac{7+\sqrt{21}}{14} \approx 0.82733$, which follows from the combination of two results. First, Barber, K{\"u}hn, Lo, and Osthus~\cite{BKLOr=3} tied the $K_3$-decomposition threshold to the fractional $K_3$-decomposition threshold as follows. A \textit{fractional $F$-decomposition} of $G$ is a collection of $F$-subgraphs of $G$ each with a weight in $[0, 1]$ such that the total weight on each edge $e \in E(G)$ over all copies of $F$ containing $e$ is exactly 1.
The \emph{fractional $K_3$-decomposition threshold} $\delta^*_{K_3}$ is defined as the infimum of all real numbers $c$ such that every graph $G$ with minimum degree at least $c\cdot v(G)$ admits a fractional $K_3$-decomposition.
(Again, there is an analogous definition for the fractional $K_q^r$-decomposition threshold $\delta^*_{K_q^r}$.) Fractional $F$-decomposability is a necessary condition for a graph to be $F$-decomposable, and Graham's construction in the addendum to~\cite{NWConj} in fact yields that $\delta_{K_3}^* \ge \frac{3}{4}.$ In~\cite{BKLOr=3}, Barber, K{\"u}hn, Lo, and Osthus used the method of iterative absorption to show that for each real $\varepsilon>0$, every sufficiently large $K_3$-divisible graph~$G$ on~$n$ vertices with $\delta(G)\ge (\max\{\delta^*_{K_3},\frac{3}{4}\}+\varepsilon)n$ admits a $K_3$-decomposition. Delcourt and the second author~\cite{DELCOURT2021382} later improved the upper bound on $\delta_{K_3}^*$ to $\frac{7+\sqrt{21}}{14}$ in 2020, thus obtaining the best known bound for $\delta_{K_3}$.

\subsection{Graph Nash-Williams' Conjecture}

A folklore generalization of Nash-Williams' Conjecture to $K_q$-decompositions (dating at least to Gustavsson's 1991 thesis~\cite{gustavsson1991decompositions}) posited that if $G$ is a sufficiently large $K_q$-divisible graph with $\delta(G) \ge \frac{q}{q+1}$, then $G$ admits a $K_q$-decomposition. A construction from Gustavsson~\cite{gustavsson1991decompositions} showed that this minimum degree threshold would have been tight. In 2019, Glock, \Kuhn, Lo, Montgomery, and Osthus~\cite{GKLMO19} tied the $K_q$-decomposition threshold to the fractional $K_q$-decomposition threshold by showing that for all $q \ge 3$, $\delta(G) \ge \left(\max \left\{\delta_{K_q}^*, \frac{q}{q+1} \right\} +\eps\right)n$ suffices to guarantee $G$ admits a $K_q$-decomposition. In the same paper, they also showed that the fractional $K_q$-decomposition threshold is at most $1-\frac{1}{10^4q^{3/2}}$, before Montgomery \cite{MontgomeryCliqueDecomps} improved this bound to $1- \frac{1}{100q}$ in 2019. For each $q \ge 4$, Delcourt, Lesgourgues, and the authors \cite{DHLP25Counter} disproved the folklore conjecture in 2025 by providing a construction of an infinite family of $K_q$-divisible graphs $G$ with minimum degree at least $\left(1-\frac{1}{c\cdot(q+1)}\right)\cdot v(G)$ for a constant $c >1$ that admit neither a $K_q$-decomposition nor even a fractional $K_q$-decomposition. In fact, Delcourt, Lesgourgues, and the authors even proved that the conjecture is off by a multiplicative factor for large enough $q$ by constructing infinitely many $K_q$-divisible graphs $G$ with minimum degree at least $\bigg(1-\frac{1}{\left(\frac{1+\sqrt{2}}{2}-\varepsilon\right)\cdot (q+1)}\bigg)v(G)$ with no $K_q$-decomposition.

\subsection{Hypergraph Nash-Williams' Conjecture}

In 2021, Glock, K{\"u}hn, and Osthus \cite{GKO20Survey} proposed a hypergraph generalization of Nash-Williams' Conjecture. First, we require more definitions and notation.
For brevity, an $r$-uniform hypergraph is an \textit{$r$-graph}. An $r$-graph $G$ is $K_q^r$\textit{-divisible} if for every $i$ with $0 \le i \le {r-1}$ and $T \subseteq \binom{V(G)}{i}$, we have that $\binom{q-i}{r-i} ~|~ e(G(T)).$ For an $r$-graph $G$, we let $\Delta(G):=\Delta_{r-1}(G)$, the maximum over all $(r-1)$-sets $S$ of the number of edges of $G$ containing $S$. Similarly, $\delta(G):=\delta_{r-1}(G)$ denotes the minimum $(r-1)$-degree of $G.$

By modifying a construction from \cite{OnIndepSets}, Glock, K{\"u}hn, Lo, and Osthus showed in \cite{GKLO16} that there exist infinitely many $K_q^r$-free $r$-graphs $G$ with $\delta(G) \geq \left(1 - c\cdot \frac{\log q}{q^{r-1}}\right)\cdot v(G)$ for some constant $c>0$ that depends on $r$ but not $q$, where this bound is derived from results in hypergraph Tur{\' a}n theory. Since the existence of a $K_q^r$-subgraph is necessary for the existence of a $K_q^r$-decomposition, hypergraph \Turan theory offers valuable insights into the problem of determining $\delta_{K_q^r}$. Yet, despite being a central question in extremal combinatorics, the problem of determining hypergraph \Turan numbers is notoriously difficult. For general $q$ and $r$, the best-known bounds on the \Turan number of $K_q^r$ are due to de Caen~\cite{dC83}, who proved an upper bound of $1 - \frac{1}{\binom{q-1}{r-1}}$, and Sidorenko~\cite{S81}, who proved a lower bound of $1 - \left(\frac{r-1}{q-1}\right)^{r-1}$. Note that these results together imply that the \Turan number for $K_q^r$ is $1 - \Theta_r\left(\frac{1}{q^{r-1}} \right)$, thus settling the order for fixed $r$ and large $q$. 

Since the \Turan number concerns only the number of edges, it is natural in hypergraphs to study alternate density conditions. The \textit{codegree \Turan density} is the minimum $\gamma >0$ such that any $r$-graph on $n$ vertices with $\delta(G) \ge (\gamma + o(1))\cdot n$ contains $F$ as a subgraph. While Glock, \Kuhn, Lo, and Osthus' construction provides a lower bound on the codegree \Turan density of $K_q^r$, in 2018 Lo and Zhao~\cite{LZ18} derived an upper bound of the same order, thus determining the codegree \Turan density of $K_q^r$ to be $1- \Theta_r\left(\frac{\log q}{q^{r-1}}\right)$ (which is striking in that the $\log q$ factor is needed for the codegree \Turan density when it is not for the $K_q^r$ \Turan number). As for the hypergraph version of Hajnal-Szemer\'edi, a result of Lu and Szekely~\cite{LS07} from 2007 implies that there exists for some constant $c > 0$ that depends on $r$ but not $q$ such that if $\delta(G) \ge \left(1 - \frac{c}{q^{r-1}}\right)\cdot n$ , then $G$ contains a $K_q^r$-factor. The constant on this bound was improved in 2021 by Pavez-Sign{\'e}, Sanhueza-Matamala, and Stein~\cite{PSS21} in their work on a hypergraph version of the Pósa-Seymour conjecture; we note that an alternate proof of their results for was given by Lang and Sanhueza-Matamala~\cite{LS24} in 2024. That said, our knowledge the best known lower bound for the hypergraph version of Hajnal-Szemer\'edi still comes from the codegree Tur\'an bound. In light of this history, the following conjecture of Glock, K{\"u}hn, and Osthus~\cite{GKO20Survey} is natural. 

\begin{conj}[Conjecture 4.4 in \cite{GKO20Survey}]\label{Conj:HG}
   Let $q > r \ge 3$ be integers. If $G$ is a sufficiently large $K_q^r$-divisible $r$-graph such that $\delta(G) \geq \left(1 - \Theta_{r}\left(\frac{1}{q^{r-1}}\right)\right)\cdot v(G)$ then $G$ admits a $K_q^r$-decomposition.
\end{conj}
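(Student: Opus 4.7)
My plan is to prove the approximate version stated in the abstract: that $\delta(G) \ge \max\{\delta_{K_q^r}^* + \eps,\ 1 - c/\binom{q}{r-1}\}\cdot n$ suffices for a $K_q^r$-decomposition. Combined with the Delcourt--Lesgourgues--Postle bound $\delta_{K_q^r}^* \le 1 - c/q^{r-1+o(1)}$ on the fractional threshold, this would approximately confirm \cref{Conj:HG} up to the $o(1)$ in the exponent. The two-term maximum is natural: the fractional threshold $\delta_{K_q^r}^*$ captures one obstruction, while the second bound $1 - c/\binom{q}{r-1}$ already arises at the codegree Tur\'an level, since at this density one just barely guarantees that $G$ even contains a $K_q^r$-subgraph.

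\textbf{Reduction via refined absorption.} I would follow the refined absorption framework of Delcourt and Postle. One constructs a small random ``absorber'' subgraph $A \subseteq G$ with the property that for any $K_q^r$-divisible ``leftover'' $L \subseteq G \setminus E(A)$ of sufficiently small edge count, the graph $A \cup L$ decomposes into cliques. Given such an $A$, one combines the fractional decomposition hypothesis $\delta(G) \ge (\delta_{K_q^r}^* + \eps) n$ with a R\"odl-nibble-style argument on $G \setminus E(A)$ to produce an approximate $K_q^r$-decomposition whose uncovered edges form a small $K_q^r$-divisible leftover, which $A$ then absorbs. The construction of $A$ reduces, as in Delcourt--Postle, to showing that $G$ contains many copies of ``absorbing gadgets'' rooted at each edge: structures admitting two distinct $K_q^r$-clique packings whose symmetric difference encodes a designated edge-swap.

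\textbf{Non-uniform Tur\'an theory for gadget counts.} To guarantee the required abundance of gadget embeddings under the weaker codegree hypothesis $\delta(G) \ge (1 - c/\binom{q}{r-1}) n$, I would develop a non-uniform Tur\'an theory. An absorbing gadget is not itself an $r$-graph but rather a union of several $K_q^r$-cliques glued along prescribed lower-uniformity subsets, so classical Tur\'an bounds do not apply directly. Adapting de Caen's counting argument --- which gives the sharp order $1 - 1/\binom{q-1}{r-1}$ for the Tur\'an density of $K_q^r$ itself --- via a suitable supersaturation step should yield $\Omega(n^{v(F)})$ embeddings of each gadget $F$ rooted at any prescribed edge, which is exactly what the refined absorption scheme requires.

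\textbf{Main obstacle.} The hardest step will be balancing two opposing requirements on the gadget design. The gadget must be rich enough that, via iterated swaps, it can correct an arbitrary small $K_q^r$-divisible leftover; yet it must be simple enough that its non-uniform Tur\'an threshold does not exceed the hypothesis $1 - c/\binom{q}{r-1}$. A too-intricate gadget raises its own Tur\'an density and breaks the codegree condition; a too-simple one fails to span all the swaps needed for absorption. Resolving this tension --- and cleanly interfacing the new non-uniform Tur\'an bound with the refined absorption scheme so that the two constants line up to yield the stated $1 - c/\binom{q}{r-1}$ threshold --- is the crux of the argument.
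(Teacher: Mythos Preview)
Your high-level strategy --- refined absorption plus a non-uniform Tur\'an theory for absorber embedding --- matches the paper's, and you correctly aim at the approximate version (\cref{thm:main}) rather than the full conjecture, which remains open.

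The genuine gap is in your ``main obstacle'' paragraph: you correctly identify the tension between gadget richness and embeddability, but you do not have the resolution, and the resolution is the paper's central structural insight. The known $K_q^r$-absorbers (from \cite{DKP25}) have rooted degeneracy $\Theta_r(q^{2r-2})$, so greedy vertex-by-vertex embedding fails badly at codegree $1 - c/\binom{q}{r-1}$, and there is reason to believe no absorber construction gets below order $q^{2r-3}$. The paper does \emph{not} redesign the gadgets to lower their degeneracy. Instead it exploits a structural feature already present: these absorbers are built from layered $q$-partite boosters and hinges, formalized as $(2,q)$-rooted partite degeneracy (\cref{def:PartiteDegeneracy}, \cref{thm:PartiteAbsorberExistence}). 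Projecting out the roots turns the rooted embedding problem into finding a blow-up of $K_q^{[r]}$ in a non-uniform host whose $i$-th level has density roughly $1 - c/\binom{q}{i-1}$, and this is precisely what the non-uniform supersaturation, blow-up, and Spencer-type lemmas (\cref{lem:supersaturation}, \cref{lem:BlowUp}, \cref{lem:NonUnifSpencer}) are built to deliver. Without the partite observation, the non-uniform Tur\'an theory has nothing to bite on and your embedding step remains stuck at the wrong order of $q$.

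Two smaller discrepancies. First, you propose adapting de Caen's bound to the non-uniform setting; the paper says this ``seemed non-trivial'' and instead generalizes Spencer's probabilistic-alteration argument (\cref{lem:NonUnifSpencer}), which costs only a constant factor and goes through cleanly. Second, the absorber $A$ is not itself random, nor does it handle an arbitrary small leftover: the random object is the reserve edge set $X$, and $A$ is an omni-absorber for $X$ obtained by taking the black-box $A_0 \subseteq K_n^r$ from \cref{thm:RefinedEfficientOmniAbsorber} and re-embedding it into $G$ via fake edges plus the private absorbers counted above.
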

Note that Glock, K{\"u}hn, and Osthus only conjectured the order of $\delta_{K_q^r}$, which is reasonable considering the notorious difficulty of determining constants in hypergraph \Turan problems. Also note that they specifically conjectured the above without the additional $\log q$ factor, suggesting that a better lower bound construction for~\cref{Conj:HG} should exist than what is achievable for the codegree Tur\'an density of $K_q^r$. We find this reasonable in light of our main results and our understanding of rooted Tur\'an theory; on the other hand we have reason to believe that some form of logarithmic factor may be achievable in the upper bound for the hypergraph Hajnal-Szemer\'edi problem (and perhaps even for some of the less-studied intermediate problems of the minimum codegree needed to guarantee the existence of a set of copies of $K_q^r$ covering each $i$-set exactly once for $2\le i \le r-1$). 

As for what is known about the upper bound in~\cref{Conj:HG}, there is a large gap between the conjectured bound for the $K_q^r$-decomposition threshold and the best-known upper bound given by Glock, K{\"u}hn, Lo, and Osthus in \cite{GKLO16} in the second proof of the Existence Conjecture as follows.

\begin{thm}[Glock, K{\"u}hn, Lo, and Osthus \cite{GKLO16}]\label{thm:GKLOLowerBound}
    Let $q > r \ge 3$ be integers. All sufficiently large $K_q^r$-divisible $r$-graphs with $\delta(G) \geq\left(1 - \frac{r!}{3\cdot14^rq^{2r}}\right)\cdot v(G)$ admit a $K_q^r$-decomposition.
\end{thm}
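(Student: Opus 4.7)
The plan is to use \emph{iterative absorption}. The framework reduces the decomposition of $G$ to (i) a sequence of near-perfect partial covers obtained from R\"odl's nibble applied to auxiliary design hypergraphs, and (ii) a single absorption step whose pre-reserved structure swallows the well-structured leftover. The minimum codegree hypothesis is used only to sustain the nibble at every level of the reduction and to embed enough absorbers into $G$.

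Concretely, I would first construct a vortex: a nested sequence $V(G) = U_0 \supseteq U_1 \supseteq \cdots \supseteq U_\ell$ in which each $U_{i+1}$ is a random $\mu$-proportion of $U_i$ for a small constant $\mu = \mu(q,r)$, terminating when $|U_\ell|$ is bounded by a function of $q,r$ only. Standard concentration preserves the codegree hypothesis $\delta(G[U_i]) \ge (1 - c/q^{2r})|U_i|$ at every level, up to a tiny slack swallowed by shrinking $c$ slightly. Before performing any covers, reserve a family of absorbers $\{A_i\}_{i\in[\ell]}$, each living between $U_{i-1}$ and $U_i$, with the property that $A_i \cup L$ is $K_q^r$-decomposable for every sufficiently small $K_q^r$-divisible $L$ on $U_i$. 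Remove these absorbers from $G$. Then iterate from $i = 0$ to $\ell-1$: on $G[U_i]$ minus prior contributions, apply R\"odl's nibble to the auxiliary hypergraph whose hyperedges are the copies of $K_q^r$ meeting $U_i\setminus U_{i+1}$, extracting a partial $K_q^r$-decomposition that covers nearly all edges not contained in $U_{i+1}$. A ``cover-down'' lemma guarantees the leftover is $K_q^r$-divisible and small enough for $A_{i+1}$ to absorb at the next step. At level $\ell$, the residual graph sits on constantly many vertices and is finished either by a direct argument or by invoking the Existence Conjecture after padding to a complete hypergraph.

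The principal obstacle, and what drives the bound $1-\Theta(1/q^{2r})$, is the absorber construction. Each $A_i$ is built as a large disjoint union of fixed-size ``gadgets'' that locally correct arbitrary divisibility offsets on $U_i$; embedding one gadget around a prescribed edge of $L$ requires finding several nested copies of $K_q^r$ sharing specified $(r-1)$-tuples. Each such rooted extension consumes a factor of order $1/q^{r-1}$ in the available density, reflecting the rooted Tur\'an-type threshold for $K_q^r$, and a gadget concatenates on the order of $q$ such rooted extensions, so the density requirement compounds to $1-\Theta(1/q^{2r})$. The technical core is therefore a quantitative rooted supersaturation estimate ensuring that enough gadgets can be packed disjointly into $G$ and into the shells of the vortex; tracking the explicit constants through these nested embeddings, together with the constant losses from the nibble and from concentration in the vortex construction, is what yields the precise value $c = r!/(3\cdot 14^r)$ in the statement.
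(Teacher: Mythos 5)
This theorem is quoted from Glock, K\"uhn, Lo, and Osthus \cite{GKLO16} and is not proven in the present paper, so there is no in-paper proof to compare your proposal against. What the paper does offer is a short commentary on the GKLO16 argument (in the ``On the Novelty of Our Proof'' subsection), and your sketch lines up with that commentary only partially.

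You correctly identify the overall architecture: a vortex $U_0\supseteq U_1\supseteq\cdots\supseteq U_\ell$, iterated cover-down via the nibble, pre-reserved absorbers at each level, and a constant-size base case. However, the paper explicitly notes that the GKLO16 proof ``relies both on many cover down steps but also an inductive argument on the uniformity,'' i.e.\ an induction on $r$ that your outline omits; this induction is structurally essential in GKLO16 (the cover-down step for $r$-graphs reduces to $(r-1)$-graph statements), and leaving it out changes the shape of the argument.

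Your heuristic for the exponent is also not quite the one the paper attributes to GKLO16. You write that a gadget concatenates ``on the order of $q$'' rooted extensions, each costing $1/q^{r-1}$ of density, and that this ``compounds to $1-\Theta(1/q^{2r})$.'' Multiplying $q$ factors of $1/q^{r-1}$ gives $q^{-q(r-1)}$, and adding them gives $q^{-(r-2)}$; neither yields $q^{-2r}$, so the step is not justified as stated. The paper's explanation is instead that the absorbers have \emph{rooted degeneracy} $\Theta_r(q^{2r-2})$ and must be embedded one vertex at a time, so the minimum degree needed for the greedy vertex-by-vertex embedding is $\bigl(1-\Theta_r(1/q^{2r-2})\bigr)n$, with the final $q^{2r}$ arising from additional losses in the iterative machinery. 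Your plan should be reframed around rooted degeneracy of the absorber (the actual bottleneck) rather than around an informal ``compounding'' of rooted Tur\'an thresholds.

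Finally, as written the proposal is an outline: the cover-down lemma, the divisibility-correction gadget construction, and the rooted supersaturation estimate are all invoked as black boxes. These are precisely the technical heart of \cite{GKLO16}, so the proposal does not yet constitute a proof of the quantitative statement with the explicit constant $r!/(3\cdot 14^r)$.
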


As for fractional decompositions, Barber, K{\"u}hn, Lo, Montgomery, and Osthus~\cite{BKLMOFracHGNW} showed in 2017 that $\delta^*_{K_q} \le 1 - \frac{c}{q^{2r-1}}$ for all $q > r \ge 1$ for some constant $c$ depending on $r$ but not $q$. In 2025, Delcourt, Lesgourgues, and the second author \cite{DLP25} dramatically improved this bound as follows. 

\begin{thm}[Delcourt, Lesgourgues, and Postle \cite{DLP25}]\label{thm:DLPFractionalBound}
For every integer $r \geq 3$ and real $\varepsilon\in (0,1]$, there exists a constant $c > 0$ such that $\delta^*_{K_q^r}\leq 1-\frac{c}{q^{r-1+\varepsilon}}$ for every integer $q > r$. 
\end{thm}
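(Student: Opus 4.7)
The plan is to prove $\delta^*_{K_q^r} \leq 1 - c/q^{r-1+\varepsilon}$ by producing a fractional decomposition via LP duality together with a rooted counting/embedding argument. A fractional $K_q^r$-decomposition of $G$ is a nonnegative weighting $w$ on the family $\cK$ of copies of $K_q^r$ in $G$ satisfying $\sum_{K \ni e} w(K) = 1$ for every edge $e \in E(G)$. By LP duality (or equivalently a direct balancing argument), it suffices to construct, for each edge $e$, a probability distribution $\mu_e$ supported on $\{K \in \cK : K \ni e\}$, and then iteratively balance these into a coherent global weighting.

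First I would establish a \emph{rooted counting lemma}: under $\delta(G) \geq \left(1 - c/q^{r-1+\varepsilon}\right) n$, for every $i$-set $S \subseteq V(G)$ with $r-1 \leq i \leq q-1$ contained in some clique, the number of extensions of $S$ to a copy of $K_q^r$ is at least $(1-o_c(1))\binom{n-i}{q-i}$. The proof is by an inductive peeling argument: add one vertex at a time to the partial clique, using the minimum $(r-1)$-degree condition to bound the proportion of bad extension vertices at each step. The crucial improvement over the earlier BKLMO threshold of $1 - c/q^{2r-1}$ comes from exploiting the $(r-1)$-degree locally at each extension step, rather than using a crude union bound over all $\binom{q}{r-1}$ internal $(r-1)$-tuples, which would cost a polynomial-in-$q$ factor.

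Second, define a near-uniform initial weighting $w_0(K) = \alpha$ on all copies of $K_q^r$, with $\alpha$ calibrated so that the expected load on each edge equals $1$. The rooted counting lemma immediately gives $L(e) := \sum_{K \ni e} w_0(K) \in [1-\eta, 1+\eta]$ for a small parameter $\eta$. Then iteratively correct the discrepancies via a clique-swap Markov chain that shifts weight between cliques sharing an edge; the near-fullness of the $(r-1)$-degree yields a uniform spectral gap for this chain, so the iteration converges to a nonnegative $w$ with $L(e) = 1$ for every $e$. Nonnegativity of the final $w$ is preserved because the total correction magnitude stays much smaller than $\alpha$.

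The main obstacle will be the rooted counting step, specifically achieving the $q^{r-1+\varepsilon}$ exponent instead of $q^{2r-1}$. This requires a \emph{non-uniform} Tur\'an-type embedding bound: one must uniformly control the number of extensions when rooting at sets whose sizes range over $r-1 \leq i \leq q-1$, and the errors must not accumulate additively with each extension step. Carrying out this induction without incurring a $\binom{q}{r-1}$ loss per step, and tracking how the permissible $(r-1)$-degree deficit shrinks as $i$ grows, is the technical heart of the argument and is precisely where near-optimality in the Tur\'an threshold is preserved.
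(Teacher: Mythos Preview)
This theorem is not proved in the present paper. It is quoted as a result of Delcourt, Lesgourgues, and Postle from~\cite{DLP25} and used as a black box: the paper combines it with its own main result (\cref{thm:main}) to derive~\cref{cor:main}. There is therefore no ``paper's own proof'' to compare your proposal against.

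As for the proposal itself, it is not a proof but a sketch with substantial gaps. The rooted counting claim --- that every $i$-set with $r-1\le i\le q-1$ extends to $(1-o_c(1))\binom{n-i}{q-i}$ copies of $K_q^r$ --- is far too strong under a codegree deficit of order $1/q^{r-1+\varepsilon}$; extending an $i$-set by one vertex imposes $\binom{i}{r-1}$ codegree constraints, and for $i$ close to $q$ this is $\Theta(q^{r-1})$, so a naive union bound yields only a $(1-\Theta(1))$ fraction of good extensions, not $1-o(1)$. You assert that the key improvement over BKLMO is ``exploiting the $(r-1)$-degree locally rather than a crude union bound,'' but you do not explain what replaces the union bound, and this is precisely the content of the theorem. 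The Markov-chain balancing step likewise presupposes a uniform spectral gap that you do not establish. In short, the proposal identifies where the difficulty lies but does not resolve it.
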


The main result of this paper is to tie the decomposition threshold $\delta_{K_q^r}$ to its fractional relaxation.
\begin{thm}\label{thm:main}
    For every integer $r \ge 3$, there exists a real $c >0$ such that the following holds. For every integer $q > r$ and real $\eps > 0$,
    every sufficiently large $K_q^r$-divisible $r$-graph $G$ on $n$ vertices with minimum degree 
    \[ \delta(G) \ge \max\left\{ \delta_{K_q^r}^* + \eps,~~1 -\frac{c}{\binom{q}{r-1}} \right\} \cdot n,\]
    admits a $K_q^r$-decomposition.
\end{thm}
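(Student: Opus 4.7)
The plan is to prove \cref{thm:main} via the refined absorption framework of Delcourt and Postle, exploiting the fact that the two terms in the minimum-degree hypothesis play complementary roles: the $\delta_{K_q^r}^* + \eps$ bound controls the nibble/fractional step, while the $1 - c/\binom{q}{r-1}$ bound controls the existence of absorbing structures via a Tur\'an-type count. Concretely, I would randomly partition $E(G)$ into an absorbing part $G_{\mathrm{abs}}$ (a tiny $o(1)$-fraction of the edges) and a nibble part $G_{\mathrm{nib}}$, with the partition arranged to preserve divisibility and, by standard concentration, to maintain $\delta(G_{\mathrm{nib}}) \ge (\delta_{K_q^r}^* + \eps/2)\cdot n$ and $\delta(G_{\mathrm{abs}}) \ge (1 - 2c/\binom{q}{r-1})\cdot n$ whp.

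Next I would build an \emph{omni-absorber} $A \subseteq G_{\mathrm{abs}}$: a $K_q^r$-divisible subgraph with the property that for every $K_q^r$-divisible \emph{leftover} $L$ that is ``small'' (say $|E(L)| = o(n^r)$) and edge-disjoint from $A$, the graph $A \cup L$ admits a $K_q^r$-decomposition. Following the Delcourt--Postle template, $A$ is assembled from many small absorbing gadgets, each of which can swap one localized pattern of edges into the global decomposition. The construction then requires that for any fixed target edge set $e_1, \dots, e_t$ one can embed a gadget rooted at these edges, disjoint from previously chosen gadgets. This is where the \emph{non-uniform Tur\'an theory} advertised in the abstract enters: one needs a lower bound on the number of rooted embeddings of each gadget's auxiliary structure in an $r$-graph of minimum $(r-1)$-degree $\left(1 - c/\binom{q}{r-1}\right)\cdot n$. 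Since $\binom{q}{r-1}$ is precisely the natural scale at which de~Caen's Tur\'an bound and its rooted refinements guarantee many $K_q^r$-copies extending a fixed $(r-1)$-set, choosing $c$ small enough yields polynomially many embeddings of each gadget's ``rooted part,'' and a greedy or nibble-type procedure then assembles the disjoint absorber.

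With $A$ fixed, I would decompose $G_{\mathrm{nib}} \cup (G_{\mathrm{abs}} \setminus A)$ into cliques plus a small leftover. Since $\delta(G_{\mathrm{nib}}) \ge (\delta_{K_q^r}^* + \eps/2)\cdot n$, there is a fractional $K_q^r$-decomposition of $G_{\mathrm{nib}}$ by definition of $\delta_{K_q^r}^*$; R\"odl nibble then converts this into a near-perfect integer decomposition leaving some leftover $L_0$ with $|E(L_0)| = o(n^r)$. A standard ``vortex-free'' extension (again using the min-degree slack) lets us fold $G_{\mathrm{abs}} \setminus A$ into the nibble so that the final leftover $L$ is $K_q^r$-divisible, edge-disjoint from $A$, and still small. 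Applying the omni-absorber property to $L$ yields a $K_q^r$-decomposition of $A \cup L$, which combined with the nibble decomposition gives the desired $K_q^r$-decomposition of $G$.

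The main obstacle, and the reason the paper stresses a new Tur\'an-type input, is Step~2: verifying that the minimum-degree threshold $1 - c/\binom{q}{r-1}$ suffices to produce many embeddings of the absorber's gadgets. These gadgets are \emph{non-uniform} (their rooted parts mix different numbers of prescribed edges and free vertices), so the standard codegree Tur\'an bounds, which would only give $1 - c\cdot(\log q)/q^{r-1}$ type thresholds, are too weak; one must develop a rooted/non-uniform Tur\'an theory that trades the logarithmic factor for control over specific embedding densities, matching the binomial denominator $\binom{q}{r-1}$. Once this counting is in hand, the rest of the argument---nibble, leftover control, absorption---follows the now-standard refined-absorption pattern.
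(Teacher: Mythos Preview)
Your high-level plan is the right one---refined absorption with an omni-absorber, nibble on the remainder, absorb the leftover, and use a non-uniform Tur\'an input to embed the absorber gadgets at the $1 - c/\binom{q}{r-1}$ threshold---and this matches the paper's approach in spirit. However, the way you wire the pieces together is garbled in a way that would block the argument.

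The most concrete error is the claim that $\delta(G_{\mathrm{abs}}) \ge (1 - 2c/\binom{q}{r-1})\cdot n$ while $G_{\mathrm{abs}}$ is ``a tiny $o(1)$-fraction of the edges'': if each edge lands in $G_{\mathrm{abs}}$ with probability $p=o(1)$, concentration gives $\delta(G_{\mathrm{abs}}) \approx p\cdot n = o(n)$, not $(1-o(1))n$. Relatedly, your notion of omni-absorber---able to absorb \emph{any} small divisible $L$ edge-disjoint from $A$---is much stronger than what is actually built; the paper's omni-absorber only absorbs divisible $L \subseteq X$ for a \emph{fixed} small reserve $X$ specified in advance. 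The correct architecture is: (1) reserve a thin random $X \subseteq G$ with $\Delta(X) = o(n)$, chosen so that every edge of $G\setminus X$ has many clique extensions into $X$; (2) build the omni-absorber $A$ for $X$ inside $G\setminus X$, using the full minimum degree of $G$ (not of some sparse slice) to count gadget embeddings---$A$ has small maximum degree but is not confined to a sparse random part; (3) regularity-boost and run nibble-with-reserves on $J = G \setminus (A \cup X)$, so that the uncovered edges are pushed into $X$; (4) absorb the divisible remainder $L = X \setminus (\text{covered})$ via $A$. Your ``folding'' step to force divisibility of the leftover is then unnecessary: $L$ is automatically divisible because $G$, $A$, and the packing are. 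A smaller point: the non-uniform Tur\'an input the paper develops generalizes Spencer's alteration bound rather than de~Caen's, which is easier to extend to the non-uniform setting and still gives the correct $\binom{q}{r-1}$ scale.
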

In particular, we obtain the constant factor $c = \frac{1}{2^{2r+2} \cdot 3^r \cdot r^{r+1}}$. We note that the $\eps$ in the above statement ensures there exists a fractional $K_q^r$-decomposition after removing a low maximum degree subgraph of $G$, and hence is only necessary for the regularity boosting step of our proof. Additionally, the ``maximum'' is necessary because we have neither a sufficient lower nor upper bound on $\delta_{K_q^r}^*$ to definitively compare between the two terms. 

When combined with~\cref{thm:DLPFractionalBound}, Theorem~\ref{thm:main} yields the following corollary, which dramatically decreases the gap between what is known and~\cref{Conj:HG} by showing that $\delta_{K_q^r}$ is almost of the same order as was conjectured by Glock, K{\"u}hn, Lo, and Osthus.

\begin{cor}\label{cor:main}
    For every integer $r \ge 1$, there exists a real $c >0$ such that the following holds. For every integer $q > r$ and real $\eps > 0$,
    every sufficiently large $K_q^r$-divisible $r$-graph $G$ on $n$ vertices with minimum degree 
    \[ \delta(G) \ge \left(1 -\frac{c}{q^{r-1+\eps}}\right) \cdot n,\]
    admits a $K_q^r$-decomposition.
\end{cor}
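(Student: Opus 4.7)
The plan is to deduce~\cref{cor:main} directly from~\cref{thm:main} combined with the fractional upper bound of~\cref{thm:DLPFractionalBound}, focusing on $r \ge 3$ (the cases $r \le 2$ follow from prior results, e.g., Montgomery's bound on $\delta^*_{K_q}$ for $r = 2$ via the graph version of the argument). Given $r \ge 3$ and $\eps > 0$, I would first reduce to the case $\eps \in (0, 2]$ by noting that the hypothesis $\delta(G) \ge (1 - c/q^{r-1+\eps})n$ is strictly monotone increasing in $\eps$, so the case $\eps > 2$ follows from (say) the case $\eps = 2$. Applying~\cref{thm:DLPFractionalBound} with parameter $\eps/2 \in (0, 1]$ then yields a constant $c_1 = c_1(r, \eps) > 0$ such that $\delta^*_{K_q^r} \le 1 - c_1/q^{r-1+\eps/2}$ for every $q > r$. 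Letting $c_2 = c_2(r)$ denote the constant supplied by~\cref{thm:main}, I would then set $c := \min\{c_1/2,\; c_2 (r-1)!\}$.

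Next, given a $K_q^r$-divisible $r$-graph $G$ on $n$ vertices with $\delta(G) \ge (1 - c/q^{r-1+\eps})n$, I would invoke~\cref{thm:main} with error parameter $\eps' := c_1/(2q^{r-1+\eps/2}) > 0$. For the fractional branch, since $c \le c_1/2$ and $q \ge 2$ implies $q^{\eps/2} \ge 1$, one computes $c/q^{r-1+\eps} \le c_1/(2 q^{r-1+\eps/2})$, which yields $\delta(G)/n \ge 1 - c_1/(2 q^{r-1+\eps/2}) \ge \delta^*_{K_q^r} + \eps'$. For the absolute branch, the inequality $\binom{q}{r-1} \le q^{r-1}/(r-1)! \le q^{r-1+\eps}/(r-1)!$ together with $c \le c_2 (r-1)!$ gives $c/q^{r-1+\eps} \le c_2/\binom{q}{r-1}$, so $\delta(G)/n \ge 1 - c_2/\binom{q}{r-1}$. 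Both hypotheses of~\cref{thm:main} are thus satisfied, and it produces the desired $K_q^r$-decomposition.

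No genuine obstacle arises at this level: the corollary is a bookkeeping consequence of the two theorems combined with a careful choice of constants. All of the real difficulty lies in~\cref{thm:main} itself, which is to be proved via refined absorption together with the new non-uniform \Turan theory developed in this paper, and in~\cref{thm:DLPFractionalBound}, which is already established.
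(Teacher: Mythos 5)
Your proof is correct and is essentially the deduction the paper intends; the paper gives no explicit argument for \cref{cor:main}, stating only that it follows by combining \cref{thm:main} with \cref{thm:DLPFractionalBound}, which is exactly what you do. The reduction to $\eps\le 2$, the application of \cref{thm:DLPFractionalBound} at parameter $\eps/2$, the choice $c=\min\{c_1/2,\;c_2(r-1)!\}$, and the verification of both branches of the $\max$ in \cref{thm:main} all check out (the inequality $\binom{q}{r-1}\le q^{r-1}/(r-1)!$ handles the absolute branch, and $q^{\eps/2}\ge 1$ handles the fractional branch). One small remark worth recording: your $c$ depends on $\eps$ through $c_1=c_1(r,\eps)$, whereas the corollary as literally written places ``there exists $c$'' before ``for every $\eps$.'' Since the constant supplied by \cref{thm:DLPFractionalBound} genuinely depends on $\eps$, an $\eps$-uniform $c$ is not obtainable by this route, and indeed is not obtainable at all without better information on $\delta^*_{K_q^r}$; this is an imprecision in the quantifier ordering of the paper's corollary statement rather than a defect in your argument, which reflects the intended reading $c=c(r,\eps)$ consistent with the abstract's $q^{r-1+o(1)}$ formulation.
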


\subsection{On the Novelty of Our Proof}\label{subsec:novelty}

One novelty of the arguments developed in this paper is that we establish a new non-uniform \Turan theory, thus overcoming the challenges related to embedding hypergraph absorbers into a high minimum degree host hypergraph. This culminates in a streamlined proof using the new technique of refined absorption.

The main obstacle to proving~\cref{thm:main} using existing strategies is due to the properties of hypergraph absorbers. In the proof of~\cref{thm:GKLOLowerBound}, the minimum degree of the host hypergraph $G$ is high relative to the rooted degeneracy of each hypergraph absorber, so the absorbers can be embedded one vertex at a time. Since the sparsest known constructions for hypergraph absorbers, as found in~\cite{DKP25}, have rooted degeneracy $\Theta_r(q^{2r-2})$ (and there are reasons to believe the theoretical limit for the rooted degeneracy of $K_q^r$-absorbers is at least on the order of $q^{2r-3}$), this presents a bottleneck for improving the minimum degree threshold in~\cref{thm:GKLOLowerBound}. Moreover, the proof of~\cref{thm:GKLOLowerBound} uses the method of iterative absorption, which relies both on many cover down steps but also an inductive argument on the uniformity. Thus, even if the rooted degeneracy of hypergraph absorbers could be improved, modifying the proof of~\cref{thm:GKLOLowerBound} would require redoing their entire argument while still resolving the obstacles we tackle here.

Thus we prove~\cref{thm:main} using the technique of refined absorption. Introduced by Delcourt and the second author~\cite{DPI} in their 2024 one-step combinatorial proof of the Existence Conjecture, this method allows us to take the existence of powerful absorbing structures called \emph{omni-absorbers} as a black box. Hence, we bypass the need to modify every step of the existence proof, and so obtain a shorter, more streamlined argument than if we used iterative absorption. In fact, refined absorption has been successfully modified for decomposition proofs in multiple settings, including the high-girth Nash-Williams' graph setting by Delcourt, Lesgourgues, and the authors in \cite{EMeetsNW}. 

Yet, the previous modifications to refined absorption are not enough to prove~\cref{thm:main}: in order to use the existence of omni-absorbers in $K_n^r$ as a black box as in~\cite{EMeetsNW}, we would need to be able to embed private absorbers to ensure the final construction uses only edges present in $G$. While rooted degeneracy suffices to embed graph absorbers in~\cite{EMeetsNW}, it remains a hurdle in our hypergraph setting. Our solution is motivated by the the natural partiteness of the absorber construction from~\cite{DKP25} and the Erd{\H o}s-Stone–Simonovits theorem (first explicitly proven and linked to Tur\'an numbers by Erd\H{o}s and Simonovits~\cite{ES66} in 1966, but following easily from the earlier work of Erd\H{o}s and Stone \cite{ES46} from 1946), which bounds the maximum number of edges in a graph that does not contain a blow up of a complete partite graph.
Combined with the classical notion of ``supersaturation'' introduced by \Erdos and Simonovits \cite{ES82}, we show that if our host hypergraph $G$ has high enough edge density, it would guarantee the existence of many copies of an absorber. However, even this is not enough, as we also require the additional property that our absorbers be rooted in $G$ at various subsets of $G$ (specifically cliques whose edges not in $G$ have been replaced by `fake-edge' gadgets in $G$). Since each edge in a hypergraph absorber may contain a different number of root vertices, we generalize the ideas above to a non-uniform setting to find many copies of an absorber modulo its root vertices. Thus, we overcome the obstacle of absorber embedding by establishing a non-uniform \Turan theory which includes non-uniform generalizations of classical results on supersaturation, the \Turan number of blow-up graphs, and the \Turan number of $K_q^r$, and hence, may be of independent interest. 

\subsection{Notation and Definitions}\label{subsec:notation}

A hypergraph $G$  consists of a set $V(G)$ whose elements are called \textit{vertices} and a set $E(G)$ of subsets of $V(G)$ called \textit{edges}; for brevity, we also sometimes write $G$ for its set of edges $E(G)$ and for a set $U \subseteq E(G)$, we sometimes write $V(U)$ for the set of vertices $\bigcup_{e\in U}e$. Similarly, we write
$v(G)$ for the number of vertices of $G$ and $e(G)$ for the number of edges of $G$.
A \textit{multi-hypergraph} consists of a set of vertices $V(G)$ and a multi-set $E(G)$ of subsets of $V$. 
For an integer $r \geq 1$, a (multi-)hypergraph $G$ is said to be \textit{$r$-bounded} if every edge of $G$ has size at most $r$ and \textit{$r$-uniform} if every edge has size exactly $r$. If $G$ is an $r$-uniform hypergraph, we say $G$ is an \textit{$r$-graph.}
For a set $S \subseteq V(H)$, we let $G(S) \coloneqq \{e \in E(G) : e \supseteq S\}$.
We denote the \textit{degree} of a set $S \subseteq V(G)$ as $d_G(S) \coloneqq |G(S)|$, and for $i \in \mathbb N$, we let $\Delta_i(G) \coloneqq \max_{S \in \binom{V(H)}{i}}d_G(S)$.
For an $r$-graph $G$, we let $\Delta(G) \coloneqq \Delta_{r-1}(G)$.
We also denote the \textit{degree} of a vertex $v \in V(G)$ as $d_G(v) \coloneqq d_G(\{v\})$. 
We let $G[S]$ denote the hypergraph of $G$ \textit{induced by} $S$ where $V(G[S]) \coloneqq S$ and $E(G[S]) \coloneqq \{e \in G : e \subseteq S\}$.

An $r$-uniform hypergraph $G$ is \textit{complete} if $E(G) = \binom{V(G)}{r}$, and a \textit{clique} in $G$ is a set $S \subseteq V(H)$ such that $G[S]$ is complete.  We call a clique $S$ of $G$ a \textit{$q$-clique} if $|S| = q$.
We let $K^r_q$ denote the complete $r$-uniform hypergraph with vertex set $[q]$.  

For hypergraphs $G$ and $H,$ we let $H \simeq G$ denote that $H$ is isomorphic to $G$. Likewise, $H \subseteq G$ denotes that $H$ is a subgraph of $G$ and $\binom{G}{H}$ denotes the set of $H$-subgraphs of $G$.

For a positive integer $r$, we use $[r]$ to denote the set $\{1, 2, \ldots, r\}$, and we let $[r]_0 = [r]\cup \{0\}$. For a set $S$ and a positive integer $r$, we use $\binom{S}{r}$ to denote the set of subsets of size $r$ in $S$.

\section{Proof Overview}

The new proof of the Existence Conjecture by Delcourt and the second author in~\cite{DPI} introduced the novel method of refined absorption. We now recall the relevant definitions. 

\begin{definition}[Absorber] Let $L$ be a $K^r_q$-divisible $r$-graph. An $r$-graph $A$ is a \emph{$K^r_q$-absorber} for $L$ if $V(L)$ is independent in $A$ and both $A$ and $A\cup L$ admit $K^r_q$-decompositions.    
\end{definition}

\begin{definition}[Omni-Absorber]\label{def:OmniAbsorbers}
An $r$-graph $A$ is a \emph{$K^r_q$-omni-absorber} for an $r$-graph $X$ with \emph{decomposition family} $\cF_A$ and \emph{decomposition function} $\cQ_A$  if $X$ and $A$ are edge-disjoint and for every $K^r_q$-divisible subgraph $L$ of $X$, $\cQ_A(L)\subseteq \cF_A$ is a $K^r_q$-decomposition of $A\cup L$.
\end{definition}

The key to the method of refined absorption is a black-box theorem about the existence of efficient omni-absorbers that also have a `refinedness' property as follows. A $K^r_q$-omni-absorber $A$ is \emph{$C$-refined} if every edge of $X\cup A$ is in at most $C$ elements of $\cF_A$. Here is the key black-box theorem from~\cite{DPI} that is at the heart of our proof of~\cref{thm:main}.

\begin{thm}[Refined Efficient Omni-Absorber~\cite{DPI}]\label{thm:RefinedEfficientOmniAbsorber}
For all integers $q > r\ge 1$, there exist an integer $C\ge 1$ such that the following holds: If $X \subseteq K_n^r$ with $\Delta(X) \le \frac{n}{C}$ and we let $\Delta:= \max\left\{\Delta(X),~n^{1-\frac{1}{r}}\cdot \log n\right\}$, then there exists a $C$-refined $K_q^r$-omni-absorber $A\subseteq K_n^r$ for $X$ such that $\Delta(A)\le C \cdot \Delta$.     
\end{thm}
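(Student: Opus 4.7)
My plan follows the \emph{booster-and-random-planting} paradigm underlying~\cite{DPI}. A \emph{booster} for an edge $e$ is a constant-sized $K_q^r$-divisible $r$-graph $B$ containing $e$ such that both $B$ and $B \setminus \{e\}$ admit $K_q^r$-decompositions; such boosters exist via explicit constructions (cf.~\cite{GKLO16}). Boosters act as toggles: if the edge $e$ lies in the divisible leave $L$, use the decomposition of $B$; otherwise use the decomposition of $B \setminus \{e\}$ together with a separate clique of $A$ that covers $e$. The absorber $A$ will be a union of many randomly planted booster copies, and the decomposition family $\cF_A$ consists of every $K_q^r$-clique appearing in some booster decomposition.

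The construction proceeds in three stages. First, I upgrade from edge-level boosters to \emph{higher-order boosters} that toggle the entire divisibility type on each constant-sized subset of $V(K_n^r)$, since $K_q^r$-divisibility is a joint condition on all $(r-1)$-subsets, not just edges. Second, for each edge $e \in X$ I randomly plant $t = t(q,r) = O(1)$ booster copies $B_e^1, \ldots, B_e^t$, each obtained by embedding the abstract booster with $e$ fixed and the remaining vertices chosen uniformly from $V(K_n^r)$; a \Lovasz Local Lemma argument (or equivalently an iterative random greedy process) ensures that the planted copies are sufficiently vertex-disjoint away from $X$. Third, for any divisible $L \subseteq X$, the function $\cQ_A(L)$ assigns each atomic divisibility unit of $L$ to one of its boosters and outputs the corresponding clique decomposition of $A \cup L$. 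By Chernoff/Talagrand concentration the maximum $(r-1)$-degree of $A$ is $O(\Delta(X) + n^{1-1/r} \log n)$, matching $C \cdot \Delta$; the logarithmic term is exactly what is needed to control the lower tail when $\Delta(X)$ is tiny. The refinedness bound follows because each edge of $A$ belongs to only $O(t) = O(1)$ cliques of $\cF_A$.

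The main obstacle, in my view, is guaranteeing \emph{uniform} solvability: for every $K_q^r$-divisible $L \subseteq X$ simultaneously, a consistent non-conflicting selection of boosters must exist, even though different edges of $L$ may compete for the same booster copies. The resolution requires either planting enough booster redundancy that a Hall-type matching condition always holds, or, more elegantly, designing the construction so that the booster chosen for $e$ depends only on the local data at $e$ rather than on $L$ as a whole. A secondary difficulty is that divisibility at $(r-1)$-sets is not decoupled from divisibility at edges, so the ``atomic'' boosters must be genuinely higher-order gadgets handling all divisibility types on a constant support at once. Both of these points are exactly where the novel refined-absorption machinery of~\cite{DPI} does its heavy lifting, and I expect a full proof would essentially reproduce that machinery.
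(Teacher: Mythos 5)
This statement is not proved in the present paper: it is Theorem~2.3 here, and is explicitly cited as a black-box result from~\cite{DPI} ("Here is the key black-box theorem from~\cite{DPI} that is at the heart of our proof"). There is therefore no proof in this paper against which to compare your attempt.

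That said, a few remarks on the substance. You correctly identify boosters as a key gadget (the paper later uses boosters and hinges from~\cite{DKP25} in Section~4), and you correctly flag the main conceptual obstacle: that a single absorbing structure must handle \emph{every} $K_q^r$-divisible $L\subseteq X$ simultaneously and consistently, with divisibility being a joint condition across all uniformities. However, the mechanism you propose --- randomly planting $O(1)$ booster copies per edge, LLL/Chernoff for disjointness and degree control, and a Hall-type matching to resolve conflicts --- is the classical ``random greedy absorber'' paradigm, and it is \emph{not} how~\cite{DPI} achieves refinedness. The refined omni-absorber in~\cite{DPI} is built from a deterministic, structured \emph{refiner}: a collection of fake-edge gadgets (exactly the objects this paper recalls in Definition~\ref{def:AntiEdge} and Proposition~\ref{prop:FakeEdgeProperties}, with the remark that ``a collection of (well-chosen) fake edges can be viewed as a \emph{refiner}, a key concept in the proof of~\cref{thm:RefinedEfficientOmniAbsorber}'') that decomposes the possible divisible leaves of $X$ into a bounded number of localized pieces, on top of which private absorbers built from boosters, hinges, and integral decompositions are layered. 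The decomposition function $\cQ_A$ is then determined locally by which fake-edges toggle on or off, which is precisely what guarantees that each edge of $A\cup X$ lies in only $C$ cliques of $\cF_A$. Your random-planting proposal would make the $C$-refinedness and the uniform-consistency property the hard part rather than a structural consequence; the $n^{1-1/r}\log n$ lower threshold on $\Delta$ arises from the refiner construction and its associated concentration, not from LLL over booster embeddings. So your sketch is directionally aware of the obstacles but proposes the wrong machinery for resolving them.
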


We now recall the outline of the new proof of the Existence Conjecture~\cite{DPI}, adapted to the Nash-Williams' setting in~\cite{EMeetsNW}. Note that though the results in~\cite{EMeetsNW} are restricted to the graph case, the outline of the argument generalizes to hypergraphs.

\vspace{0.05in}
Let $G \subseteq K_n^r$ be as in~\cref{thm:main}. 
\begin{enumerate}\itemsep.05in
    \item[(1)] `Reserve' a random subset $X$ of $E(G)$ uniformly with some small probability $p$.
    \item[(2)] Construct an omni-absorber $A$ of $X$ by taking an omni-absorber $A_0$ for $X$ as guaranteed by ~\cref{thm:RefinedEfficientOmniAbsorber}, and re-embed $A_0$ into $G$ using ``fake edges" and private absorbers.
    \item[(3)] ``Regularity boost'' $G\setminus (A\cup X)$ to find a very regular set of cliques.
    \item[(4)] Apply ``nibble with reserves'' theorem to find a $K^r_q$-packing of $G\setminus A$ covering $G\setminus (A\cup X)$ and then extend this to a $K^r_q$-decomposition of $G$ by definition of omni-absorbers.
\end{enumerate}

For (1), the authors of~\cite{DPI} proved that there is a choice of $X$ where every edge $e\in G\setminus X$ is in many copies of $K^r_q$ in $X\cup\{e\}$ and $\Delta(X)$ (and, hence, $\Delta(X\cup A)$ by~\cref{thm:RefinedEfficientOmniAbsorber}) is small, and this proof easily generalizes to the Nash-Williams's setting. Similarly, step (4) is a standard technique in absorption proofs whose use also directly generalizes from the complete setting of~\cite{DPI} to our Nash-Williams' setting. For (3), the authors in~\cite{DPI} use a special case of the Boosting Lemma of Glock, K\"{u}hn, Lo, and Osthus~\cite{GKLO16} which requires a minimum degree very close to $n$. This step is more complicated in the Nash-Williams' setting, but the solution presented in~\cite{EMeetsNW} generalizes from the graph Nash-Williams' setting to our hypergraph Nash-Williams' setting. 

The crux of our proof of~\cref{thm:main} then is in accomplishing (2). From~\cref{thm:RefinedEfficientOmniAbsorber}, there exists a refined $K_q^r$-omni-absorber $A_0$ for $X$ in $K_n^r$. Yet $A_0$ may use edges that are not present in $G$. The solution in~\cite{EMeetsNW} is to re-embed $A_0$ into $G \setminus X$, using the methodology of fake-edges and private absorbers developed in~\cite{DPI}. By the General Embedding Lemma also established in~\cite{EMeetsNW}, it suffices to show that there are many possibilities for where to embed each individual gadget into $G$. In the graph setting of~\cite{EMeetsNW}, this is accomplished by showing that both fake-edges and graph absorbers can be embedded one vertex at a time due to their small rooted degeneracy relative to the minimum degree of $G$. 
However, this strategy fails for hypergraphs because $G$ has missing degree on the order of $1/q^{r-1}$, while the sparsest known $K_q^r$-absorber construction, as presented in~\cite{DKP25}, yields a rooted degeneracy of $\Theta_r(q^{2r-2})$. 

We overcome this hurdle by exploiting the structure of hypergraph absorbers, rather than the rooted degeneracy. In particular, the absorbers in~\cite{DKP25} are built from layered gadgets, each of which is $q$-partite. The famous Erd{\H o}s-Stone–Simonovits theorem \cite{ES66, ES46} bounds the maximum number of edges in a graph that does not contain a $t$-blow up of a complete $q$-partite graph. Since the high minimum degree condition for $G$ guarantees it has high edge density, we are motivated to use \Turan theory to show that $G$ contains many copies of each absorber. Yet, this is insufficient, as we also require the absorbers be correctly rooted in $G$. Since each edge of our hypergraph absorber may have a different number of root vertices, we instead establish a novel non-uniform hypergraph \Turan theory. We use this theory to find many copies of the hypergraph obtained by ``projecting out'' the root vertices of our absorber in the hypergraph obtained from $G$ by ``projecting'' out the root vertices (we formalize this idea in~\cref{Subsec:PfOverviewAbsorbers}). Thus, instead of a \Turan point, we define the notion of a \emph{\Turan space}, which is roughly the set of points of edge-densities in each uniformity which guarantee the existence of a non-uniform hypergraph (see~\cref{Subsec:NonUnifTuran} for a formal definition). 

In this paper, we prove three key properties of \Turan spaces. First, we generalize \Erdos and Simonovits' \cite{ES82} ``supersaturation'' result, by showing that if a hypergraph has high enough edge density in every uniformity, it not only contains one copy of a non-uniform hypergraph $F$ but many. Next, we show that the \Turan density of an $r$-bounded hypergraph $F$ is the same as its blow-up. Finally, we show that there is a number in our \Turan space that is satisfied by our high minimum degree hypergraph $G$. 

In~\cref{subsec:PfOverviewNW}, we include the statements and intuition for how we resolve steps (1), (3), and (4) of the proof outline. In~\cref{Subsec:PfOverviewAbsorbers}, we include the statements that allow us to reduce step (2) of the proof outline to an embedding problem. Then, in~\cref{Subsec:NonUnifTuran} we develop the Non-Uniform \Turan Theory to solve this embedding problem. 

\subsection{A Refined Absorption Proof of Hypergraph Nash-Williams' - Steps 1, 3, and 4}\label{subsec:PfOverviewNW}

As described above, steps (1), (3), and (4) of our proof outline are standard applications of previously developed techniques. Here we include the relevant statements for these steps. 

For (1), the subsequent statement follows from Chernoff bounds. We include its proof in~\cref{sec:Finish}.

\begin{lem}[Hypergraph High Minimum Degree Reserves Lemma]\label{lem:highmindegreserves}
For every $q > r \geq 1$ and real $\eps \in (0, 1)$, there exists a real number $\gamma \in (0, 1)$ and positive integer $n_0$ such that for all integers $n \geq n_0$ the following holds: 
\vskip.1in

\noindent Let $G \subseteq K_n^r$ such that $\delta(G) \geq \left(1 - \frac{1}{\binom{q-1}{r-1}}+\varepsilon\right)n$. If $p\in [n^{-\gamma}, 1)$, then there exists a spanning subgraph $X \subseteq G$ with $\Delta(X) \leq 2pn$ such that every $e \in G\setminus X$ is in at least $\gamma \cdot p^{\binom{q}{r}-1}\binom{n}{q-r}$ $K_q^r$-cliques of $X \cup \{e\}$.
\end{lem}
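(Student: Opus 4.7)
The plan is the standard random sparsification argument: construct $X$ by including each edge of $G$ independently with probability $p$, and show that both the maximum-degree bound and the lower bound on $K_q^r$-cliques through each remaining edge hold with high probability; a union bound then finds the desired $X$. The maximum-degree bound is straightforward: for any $(r-1)$-set $S$, $\Expect{d_X(S)} \le pn$, and since $p \ge n^{-\gamma}$ ensures $pn \ge n^{1-\gamma} \to \infty$, Chernoff gives $\Prob{d_X(S) > 2pn} \le e^{-\Omega(pn)}$, with a union bound over the at most $n^{r-1}$ choices of $S$ sealing it (for any $\gamma < 1$).

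The substantive step is a supersaturation statement: every edge $e \in G$ lies in $\Omega_{q,r,\eps}(n^{q-r})$ copies of $K_q^r$ in $G$. I would prove this by greedy extension, starting from $e$ and adding new vertices one at a time to form a growing clique. To extend a $k$-clique (with $r \le k < q$), a new vertex must be a common neighbor of all $\binom{k}{r-1}$ of the $(r-1)$-subsets already present. Each such $(r-1)$-set has at most $\left(\frac{1}{\binom{q-1}{r-1}}-\eps\right)n$ non-neighbors, so by a union bound at most $\binom{k}{r-1}\left(\frac{1}{\binom{q-1}{r-1}}-\eps\right)n \le n - \eps \binom{q-1}{r-1} n$ candidates fail; hence at least $\eps\binom{q-1}{r-1}n - k = \Omega(n)$ choices remain. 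Multiplying through the $q-r$ extension steps and dividing by $(q-r)!$ yields $\Omega(n^{q-r})$ distinct $K_q^r$-cliques through $e$.

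Next, for each $e \in G$, let $Y_e$ count the $K_q^r$-cliques of $X \cup \{e\}$ containing $e$. Each of the $\Omega(n^{q-r})$ $K_q^r$-cliques of $G$ through $e$ survives into $X \cup \{e\}$ precisely when its $\binom{q}{r}-1$ other edges are all sampled, so $\Expect{Y_e} \ge c(q,r,\eps) \cdot p^{\binom{q}{r}-1} n^{q-r}$. Provided $\gamma$ is chosen below $(q-r)/(\binom{q}{r}-1)$, this expectation grows like a positive power of $n$. I would then apply Janson's inequality for the lower tail (or, more robustly, Kim--Vu polynomial concentration of degree $\binom{q}{r}-1$) to deduce $\Prob{Y_e < \tfrac{1}{2}\Expect{Y_e}} \le e^{-n^{\Omega(1)}}$; a union bound over the at most $n^r$ edges $e \in G$ and the maximum-degree event produces $X$ with $Y_e \ge \gamma \cdot p^{\binom{q}{r}-1}\binom{n}{q-r}$ for every $e \notin X$, after choosing the final $\gamma$ small enough to absorb the constants $c/2$ and $(q-r)!$.

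The main obstacle is the concentration of $Y_e$, since the clique-indicators are correlated whenever two cliques share at least two non-$e$ vertices and hence an edge. Bounding the correlation term $\bar\Delta$ needed for Janson's inequality amounts to counting intersecting pairs of $K_q^r$-cliques weighted by the number of common edges, which is routine inside $K_n^r$ but must be carried out carefully and is the only calculation that is not a one-line Chernoff application. Alternatively, Kim--Vu sidesteps the correlation analysis by only requiring bounds on maximum partial expectations of the multilinear polynomial $Y_e$, which follow from the same codegree argument used in the supersaturation step.
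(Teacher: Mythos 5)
Your proposal is correct in outline and would work, but it takes a genuinely different route through the concentration step. You propose to fix an edge $e$, let $Y_e$ count the copies of $K_q^r$ through $e$ whose remaining $\binom{q}{r}-1$ edges all land in $X$, observe $\E[Y_e]=\Omega\bigl(p^{\binom{q}{r}-1}n^{q-r}\bigr)$ via supersaturation, and then concentrate $Y_e$ with Janson or Kim--Vu, which requires controlling the correlation term $\bar\Delta$ coming from pairs of cliques that share an edge. The paper sidesteps that correlation analysis entirely. Instead of concentrating the clique count directly, it concentrates the ``common neighborhood'' random variables: for every vertex set $T$ with $1\le|T|\le q-1$, it tracks $|N_X(T)|$, the number of vertices $v$ that form an edge of $X$ with every $(r-1)$-subset of $T$. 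Crucially, for fixed $T$ the indicators $[v\in N_X(T)]$ over distinct $v$ depend on \emph{disjoint} sets of edges of $G$, so $|N_X(T)|$ is a sum of independent Bernoullis and a plain Chernoff bound gives $\Pr[|N_X(T)|<\tfrac12 p^{\binom{|T|}{r-1}}|N_G(T)|]<\tfrac{1}{2qn^{q-1}}$; a union bound over all $O(n^{q-1})$ choices of $T$ closes the argument. Once every common neighborhood is large, the lower bound on the number of cliques through $e$ in $X\cup\{e\}$ follows \emph{deterministically} by greedy one-vertex-at-a-time extension (and this greedy extension also replaces your separate supersaturation lemma). So both proofs are correct; the paper's decomposition buys you elementary concentration at the cost of tracking polynomially many auxiliary events, while yours concentrates a single natural quantity per edge at the cost of a second-moment/$\bar\Delta$ computation. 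If you pursue your version, the one thing to verify carefully is that $\bar\Delta=o(\E[Y_e]^2/\log n)$ uniformly over the range $p\in[n^{-\gamma},1)$, which forces a constraint on $\gamma$ beyond the one you stated.
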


For (3), we follow in the footsteps of~\cite{EMeetsNW} and prove the following statement.  

\begin{thm}[Hypergraph Nash-Williams' Boosting Theorem - Polynomially Small Irregularity Version]\label{thm:NWRegBoost}
For every integers $q> r \ge 2$ and real $\varepsilon \in (0,1)$, there exists $c\in(0,1)$ such that the following holds for all $n$ large enough. If $J\subseteq K^r_n$ with minimum degree at least $(\max\{\delta_{K^r_q}^*, 1 - \frac{1}{\binom{q-1}{r-1}} \}+\varepsilon)n$, then there exists a family $\mc{H}$ of copies of $K^r_q$ in $J$ such that every $e\in J$ is in $\parens*{c\pm n^{-(q-r)/3}}\cdot \binom{n-r}{q-r}$ copies of $K^r_q$ in $\mc{H}$. 
\end{thm}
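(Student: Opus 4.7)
The plan is to convert the fractional $K_q^r$-decomposition guaranteed by the hypothesis into a nearly regular integer family via probabilistic rounding, following the graph template of~\cite{EMeetsNW} lifted to the hypergraph setting.

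First, I would extract the fractional decomposition: since $\delta(J) \ge (\delta_{K_q^r}^* + \varepsilon)n$, by definition of $\delta_{K_q^r}^*$ there exists a fractional $K_q^r$-decomposition $\omega : \binom{J}{K_q^r} \to [0,1]$ with $\sum_{Q \ni e} \omega(Q) = 1$ for every $e \in J$. At the same time, the separate bound $\delta(J) \ge (1 - 1/\binom{q-1}{r-1} + \varepsilon)n$ lies just above de Caen's Tur\'an-type threshold for $K_q^r$, so standard supersaturation yields that every edge $e \in J$ is contained in $\Omega_{\varepsilon,q,r}(n^{q-r})$ copies of $K_q^r$ in $J$.

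Next, I would smooth $\omega$ into a fractional $K_q^r$-decomposition $\omega'$ of $J$ whose maximum weight is $O_{q,r}(1/\binom{n-r}{q-r})$. Heuristically, the abundance of $K_q^r$-copies through each edge provides enough slack to shift weight off of heavily loaded cliques onto lightly loaded ones sharing their edges, without disturbing the decomposition constraint. In practice this can be implemented by a local flow argument on the bipartite edge--clique incidence structure or, equivalently, by convex-combining $\omega$ with a near-uniform fractional decomposition $\omega_{\mathrm{unif}}$ of $J$ constructed locally from the supersaturated supply of copies.

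Finally, I would set $N := c\binom{n-r}{q-r}$ for a small constant $c = c(q,r) > 0$ and form $\mathcal{H}$ by including each $K_q^r$-copy $Q$ in $J$ independently with probability $p_Q := N\omega'(Q) \in [0,1]$. For every $e \in J$ the expected number of copies of $\mathcal{H}$ through $e$ equals $N$; since the indicators are independent with uniformly bounded probabilities, Chernoff yields a deviation of $O(\sqrt{N\log n}) = O(n^{(q-r)/2}\sqrt{\log n})$, which, for $q > r$ and $n$ large, is strictly less than $n^{-(q-r)/3}\cdot N = n^{2(q-r)/3}$. A union bound over the $O(n^r)$ edges of $J$ then delivers a deterministic choice of $\mathcal{H}$ satisfying the conclusion.

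The hard part will be the smoothing step: while the existence of $\omega'$ with small maximum weight is intuitively clear from the Tur\'an-type supersaturation, carrying out the rebalancing rigorously so that the fractional decomposition constraint is preserved throughout is the technical crux. The remaining ingredients---existence of the fractional decomposition, the standard supersaturation count, and the Chernoff plus union-bound step---are routine applications of known tools.
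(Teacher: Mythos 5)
Your overall plan---first produce a low-maximum-weight fractional $K_q^r$-decomposition, then round it probabilistically and apply Chernoff plus a union bound---matches the paper's architecture exactly, and your Chernoff calculation is correct: with $N = c\binom{n-r}{q-r}$, the deviation $O(\sqrt{N\log n}) = O(n^{(q-r)/2}\sqrt{\log n})$ is indeed dominated by $n^{-(q-r)/3}N = \Theta(n^{2(q-r)/3})$ since $(q-r)/2 < 2(q-r)/3$.

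The genuine gap is the smoothing step, which you yourself flag as the technical crux but do not resolve, and the two ideas you sketch do not obviously work. Convex-combining $\omega$ with a near-uniform fractional decomposition $\omega_{\mathrm{unif}}$ is circular: the existence of a fractional decomposition with small maximum weight \emph{is} the statement you are trying to prove, so you cannot assume $\omega_{\mathrm{unif}}$ exists. A ``local flow'' on the edge--clique incidence bipartite graph is also not a local operation: shifting weight from a clique $Q_1$ to a clique $Q_2$ sharing an edge $e$ leaves $\partial\omega(e)$ fixed but perturbs the weight on every edge of the symmetric difference $Q_1\triangle Q_2$, so preserving the decomposition constraint globally requires finding elaborate alternating structures, and no such argument is supplied. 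Moreover, the $\Omega(n^{q-r})$ per-edge count of $K_q^r$-copies (which you derive from supersaturation) is too weak an input: having many copies through each edge does not in itself show that the LP polytope of fractional decompositions contains a low-max-weight point.

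What the paper actually does (in \cref{lem:LowWeightFracDecomposition}) sidesteps the global smoothing problem entirely by localizing. Using Lang's Inheritance Lemma (\cref{lem:LangInheritance}) -- which is strictly stronger than supersaturation, as it says that almost every $s$-set $S$ containing a fixed edge $e$ \emph{inherits} the minimum-degree condition, not merely that there are many cliques -- one finds, for each good $s$-set $S$, a fractional $K_q^r$-decomposition of $J[S]$ with prescribed edge values (via the Fixed Fractional Decomposition Lemma, \cref{lem:Fixed}), and then averages over all good $s$-sets. Each clique $Q$ lies in only $\binom{n-v(F)}{s-v(F)}$ such $s$-sets, which is what produces the $O(1/\binom{n-r}{q-r})$ bound on the weight after normalization, and the prescribed edge values $\varphi(e)$ are chosen so that the averaged total degree $\partial\phi(e)$ is exactly $1$ for every edge. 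You would need both of these ingredients -- the Inheritance Lemma to control which $s$-sets are usable, and the Fixed Fractional Decomposition Lemma to hit exact edge weights so the pieces sum cleanly -- to make your smoothing step rigorous; they are not routine and are precisely where the real work lies.
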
 

As in~\cite{EMeetsNW}, \cref{thm:NWRegBoost} is a corollary of the following lemma (via sampling and Chernoff bounds). We say a fractional $F$-decomposition of an $r$-graph $G$ on $n$ vertices is \emph{$C$-low-weight} if every copy of $F$ in $G$ has weight at most $\frac{C}{\binom{n-r}{v(F)-r}}$. 

\begin{thm}\label{lem:LowWeightFracDecomposition}
    For every $r$-graph $F$ and real $\varepsilon\in (0,1)$, there exists an integer $C\geq 1$ such that the following holds for every integer $n$ large enough. If $G$ is a $n$-vertex $r$-graph with $\delta(G)\geq (\delta^*_F+\varepsilon)n$, then there exists a $C$-low-weight fractional $F$-decomposition of $G$.
\end{thm}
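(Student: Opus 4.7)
The plan is to combine a R\"odl-nibble / Pippenger--Spencer-type construction of a low-weight fractional almost-decomposition with a small correction using an arbitrary fractional decomposition guaranteed by the $\delta^*_F + \varepsilon$ hypothesis. Throughout, write $k := v(F)$ and $N := \binom{n-r}{k-r}$.

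By the definition of $\delta^*_F$ and the hypothesis $\delta(G) \ge (\delta^*_F + \varepsilon)n$, for $n$ sufficiently large, $G$ admits some fractional $F$-decomposition $\omega^*$, which automatically satisfies $\omega^*(F') \le 1$ for every copy $F' \subseteq G$ (since $\sum_{F' \ni e}\omega^*(F') = 1$ and $\omega^* \ge 0$). Moreover, since $\delta(G)$ is comfortably above the Tur\'an density of $F$, a supersaturation argument (standard for high-minimum-degree host graphs) yields that every edge $e \in E(G)$ lies in at least $\alpha N$ copies of $F$ in $G$, for some constant $\alpha = \alpha(F, \varepsilon) > 0$; this is the key density input for the nibble step.

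Next I would apply an iterated R\"odl-nibble (or Pippenger--Spencer-type) argument to the auxiliary hypergraph on vertex set $E(G)$ whose hyperedges correspond to copies of $F$ in $G$; this auxiliary hypergraph is almost-regular of degree at least $\alpha N$ with small codegrees (verifiable by standard density counts). Iterating the nibble produces a fractional almost-decomposition $\omega_0$ with $\omega_0(F') \le C_0/N$ for every copy $F'$ and $\sum_{F' \ni e} \omega_0(F') = 1 - \tau$ for every edge $e \in E(G)$, where $\tau > 0$ can be driven below $1/N$ by iterating sufficiently many times. Enforcing exactly uniform residual (rather than the generic ``at least $1-\tau$'' bound that a single nibble pass would yield) requires either a careful stopping rule or an additional clean-up pass, and is the source of most of the technical difficulty.

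Finally, set $\omega := \omega_0 + \tau\,\omega^*$. Then $\sum_{F' \ni e} \omega(F') = (1-\tau) + \tau\cdot 1 = 1$ for every $e$, so $\omega$ is a valid fractional $F$-decomposition of $G$, and $\omega(F') \le \omega_0(F') + \tau\,\omega^*(F') \le C_0/N + \tau \le (C_0+1)/N$, giving the desired $C$-low-weight bound with $C := C_0 + 1$. The hard part is the nibble step: simultaneously ensuring the low-weight bound $\omega_0(F') \le C_0/N$ and an exactly-uniform residual $\tau$ requires a weighted nibble tailored to $F$-copies, with careful verification of the degree and codegree conditions needed to invoke Pippenger--Spencer, and enough nibble iterations to push $\tau$ below $1/N$. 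Everything else (the reduction to supersaturation and the final additive correction using $\omega^*$) is essentially routine once the low-weight almost-decomposition is in hand.
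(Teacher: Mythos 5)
Your plan has two serious gaps, both concentrated around the nibble step, and the proposed route is genuinely harder than what the problem requires. First, you assert that the auxiliary ``design hypergraph'' (vertices $=$ edges of $G$, hyperedges $=$ copies of $F$ in $G$) is almost-regular. The density argument you sketch only gives a lower bound of the form $\alpha N$ on the number of copies of $F$ through each edge, and the trivial upper bound is $N$; so the degrees in the auxiliary hypergraph may vary by a constant factor across vertices. The Pippenger--Spencer / R\"odl-nibble framework requires degrees $(1 \pm o(1))D$ for a common $D$, and a constant-factor spread is not enough. In fact, forcing this auxiliary hypergraph to be near-regular is exactly what the downstream boosting theorem, \cref{thm:NWRegBoost}, is designed to do --- and \cref{thm:NWRegBoost} is \emph{proved from} the lemma you are now trying to prove, so your plan is essentially circular.

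Second, your correction step $\omega := \omega_0 + \tau\omega^*$ requires the residual $1 - \sum_{F' \ni e}\omega_0(F')$ to equal the \emph{same} constant $\tau$ for every edge $e$, since $\omega^*$ adds exactly $\tau$ to every edge. The nibble gives only approximately equal (in fact only one-sided ``at least'') residuals. You flag this yourself as ``the source of most of the technical difficulty,'' but there is no standard way to extract exactly uniform residuals from a nibble; if the leave function $g(e) := 1 - \partial\omega_0(e)$ varies with $e$, then adding $\tau\omega^*$ over- or under-covers. You would need a separate ``corrector'' producing a low-weight fractional packing $\psi$ with $\partial\psi(e)=g(e)$ for all $e$, which is the actual engine missing from your argument. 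The paper avoids these obstacles entirely: it never invokes a nibble at this stage. Instead it combines \cref{lem:Fixed} (a convexity argument that builds a fractional packing with \emph{arbitrary prescribed} edge-weights in $[1-\tfrac{1}{e(G)},1]$ by taking a convex combination of fractional decompositions of $G$ and the $G-e$'s) with Lang's Inheritance Lemma (\cref{lem:LangInheritance}), which shows that almost all induced $s$-sets containing a fixed edge inherit the $\delta^*_F + \varepsilon/2$ minimum degree; averaging prescribed-weight fractional $F$-decompositions of these bounded-size $G[S]$'s and rescaling directly yields the $C$-low-weight fractional decomposition, with no regularity control or residual matching needed.
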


Following the strategy from~\cite{EMeetsNW}, in order to prove~\cref{lem:LowWeightFracDecomposition}, we first show that we can find an $F$-packing in a hypergraph $G$ where each edge obtains a predetermined ``target'' weight in the packing. Then, using an Inheritance Lemma from Lang~\cite{lang2023tiling}, we show that for an appropriately chosen integer $s$, every edge $e \in G$ is in many $s$-sets $S$ that roughly inherit the minimum degree of $G$, and hence $G[S]$ will admit a fractional $K_q^r$-decomposition. By ensuring the weight of $e$ is low in each of these decompositions and scaling appropriately, we obtain the desired low-weight fractional decomposition. The proofs of~\cref{lem:highmindegreserves} and~\cref{lem:LowWeightFracDecomposition} appear in~\cref{Sec:boosting}.

Finally, for (4) we use the following special case of Nibble with Reserves tailored to designs in~\cite{P25Survey}.

\begin{lem}[Corollary 2.4 in \cite{P25Survey}]\label{cor:NibbleReservesSpecific}
For every $q > r\ge 1,~\rho \in (0,1]$, there exists $\alpha\in (0,1)$ such that the following holds for all $n$ large enough. Let $G\subseteq K_n^r$ and $\mathcal{H}\subseteq \binom{G}{K_q^r}$ such that every edge of $G$ is in $\left(\rho \pm n^{-1/3}\right) \binom{n}{q-r}$ cliques of $\mathcal{H}$. If there exists $X\subseteq K_n^r\setminus G$ such that every $e\in G$ is in at least $n^{q-r-\alpha}$ $K_q^r$-cliques of $X\cup \{e\}$, then there exists a $K_q^r$-packing of $G\cup X$ that cover all edges of $G$. 
\end{lem}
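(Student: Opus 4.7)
The plan is to execute the standard two-stage ``nibble + reserves'' strategy: first run a pseudorandom matching theorem on the auxiliary hypergraph encoded by $\mathcal{H}$ to produce a $K_q^r$-packing covering all but a polynomially small set of edges of $G$, and then use the reserves $X$ to cover this leftover via a second matching argument.

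First I would consider the auxiliary $\binom{q}{r}$-uniform hypergraph $\mathcal{A}$ whose vertex set is $E(G)$ and whose hyperedges are the edge sets of the cliques in $\mathcal{H}$. By hypothesis every vertex of $\mathcal{A}$ has degree $(\rho \pm n^{-1/3})\binom{n}{q-r}$, and a standard calculation shows any two vertices share at most $O(n^{q-r-1})$ common hyperedges, so the degree-to-codegree ratio is polynomial in $n$. A modern pseudorandom matching theorem (of Pippenger--Spencer / Frankl--R\"odl type) then yields a matching $M$ in $\mathcal{A}$ whose uncovered vertex set $L \subseteq E(G)$ has size $|L| \leq n^{-\beta} \cdot e(G)$ for some constant $\beta = \beta(q,r,\rho) > 0$. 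Viewing $M$ as a $K_q^r$-packing inside $\mathcal{H}$, only the edges in $L$ remain to be covered.

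Next I would build a second $\binom{q}{r}$-uniform auxiliary hypergraph $\mathcal{B}$ on vertex set $L \cup X$ whose hyperedges are the edge sets of $K_q^r$-cliques contained in $L \cup X$ that use at least one edge of $L$. The reserve hypothesis gives that every $e \in L$ lies in at least $n^{q-r-\alpha}$ such cliques, while codegrees between distinct vertices of $\mathcal{B}$ remain $O(n^{q-r-2})$. Choosing $\alpha$ sufficiently small relative to $\beta$ and the quantitative constants of the first nibble, a second nibble (or, since $|L|$ is polynomially small compared to the reserve density, a greedy/K\"onig-style argument) yields a matching in $\mathcal{B}$ covering all of $L$. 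Concatenating this matching with $M$ produces the required $K_q^r$-packing of $G \cup X$ covering every edge of $G$.

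The main obstacle is ensuring the leftover $L$ inherits enough pseudorandomness from the first nibble to drive the second matching step: one needs not only that $|L|$ be polynomially small, but also that $L$ not concentrate on small vertex subsets, which would allow reserve cliques at different edges of $L$ to collide on shared edges. A cleaner route, consistent with the way the lemma is cited, would be to invoke a black-box Nibble-with-Reserves theorem directly: since the statement is flagged as a corollary from~\cite{P25Survey}, one expects the heavy lifting (carrying out the nibble while simultaneously protecting the reserve set $X$ from being depleted) to be absorbed into the underlying theorem of that reference, so that this corollary reduces to checking that the near-$\rho$-regularity of $\mathcal{H}$ together with the polynomial reserve density $n^{q-r-\alpha}$ on $X$ fit its hypotheses for a suitably small choice of $\alpha$.
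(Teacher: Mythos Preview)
The paper does not prove this lemma at all: it is stated as Corollary~2.4 of the survey~\cite{P25Survey} and invoked as a black box. There is therefore no ``paper's own proof'' to compare against, and your closing paragraph already lands on exactly what the paper does---quote the result and move on.

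As for your direct-proof sketch, the high-level two-stage outline (nibble on $\mathcal{H}$, then absorb the leftover into $X$) is the correct picture, and you have also correctly located the real difficulty. Two small points worth flagging. First, in your second auxiliary hypergraph $\mathcal{B}$, the hypothesis only supplies cliques in $X\cup\{e\}$, i.e.\ cliques using \emph{exactly one} edge of $L$; so the hyperedges of $\mathcal{B}$ should be restricted to this form rather than ``at least one edge of $L$''. Second, and more substantively, the obstruction you name---that $|L|$ being polynomially small is not by itself enough---is exactly the point: what one actually needs from the first stage is a bound on $\Delta_{r-1}(L)$, not merely on $|L|$, so that the degree of any $X$-vertex in $\mathcal{B}$ is controlled and a Haxell-type criterion (cf.\ the Bipartite Finishing Lemma in the appendix) applies. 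Carrying that through is precisely the content of the Nibble-with-Reserves theorem being cited, so your instinct to defer to the black box is the right call.
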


\subsection{Proof Overview of the Hypergraph Nash-Williams' Omni-Absorber Theorem}\label{Subsec:PfOverviewAbsorbers}

To accomplish step (2) of our proof outline, we prove the following theorem. 

\begin{thm}[Hypergraph Nash-Williams' Refined Efficient Omni-Absorber]\label{thm:NWRefinedEfficientOmniAbsorber}
For every integer $r \ge 1$, there exists a real $c>0$ such that the following holds. For every integer $q >r$, there exists an integer $C\ge 1$ such that the following holds for large enough $n$:
\vskip.1in
\noindent If $X\subseteq  G\subseteq K^r_n$ with $\delta(G)\ge  \left(\ 1 - \frac{c}{\binom{q}{r-1}}\right)\cdot n $ and $\Delta(X) \le \frac{n}{C}$, then there exists a $C$-refined $K^r_q$-omni-absorber $A$ for $X$ in $G$ with $\Delta(A) \le C\cdot \max\left\{\Delta(X),~n^{1- 1/r}\cdot \log n\right\}$.    
\end{thm}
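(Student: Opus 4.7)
The plan is to use the Refined Efficient Omni-Absorber theorem (\cref{thm:RefinedEfficientOmniAbsorber}) as a black box to obtain a $C_0$-refined $K_q^r$-omni-absorber $A_0\subseteq K_n^r$ for $X$ with $\Delta(A_0)\le C_0\cdot \Delta$, where $\Delta:=\max\{\Delta(X),\,n^{1-1/r}\log n\}$. Since $A_0$ may use edges of $K_n^r\setminus G$, we then ``re-embed'' $A_0$ into $G$ using the fake-edge/private-absorber framework developed in~\cite{DPI} and adapted to the Nash-Williams' setting in~\cite{EMeetsNW}. Specifically, for each missing edge $e\in A_0\setminus G$ we attach a fake-edge gadget contained in $G$ whose role is to play the part of $e$ in every clique of $\cF_{A_0}$ that uses $e$; and for each $K_q^r$ of $\cF_{A_0}$ that ends up containing one or more fake edges, we attach a private absorber inside $G$ that absorbs the leftover of the fake-edges' decomposition back into an honest $K_q^r$-decomposition. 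The new decomposition family $\cF_A$ then consists of the $K_q^r$'s of $\cF_{A_0}$ that used no missing edges, together with the $K_q^r$'s prescribed by the fake-edge gadgets and private absorbers, and refinedness is preserved up to a multiplicative constant depending only on $q$ and $r$.

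The engine to carry out this re-embedding is the General Embedding Lemma of~\cite{EMeetsNW}, which embeds disjoint copies of a family of rooted gadgets provided that, for each gadget and each prescribed root set, the number of root-extending copies in $G$ is sufficiently large. In the graph Nash-Williams' setting of~\cite{EMeetsNW}, these counts followed from the small rooted degeneracy of the gadgets relative to the codegree slack of $G$. This approach collapses in our hypergraph setting: the sparsest known $K_q^r$-absorbers from~\cite{DKP25} have rooted degeneracy $\Theta_r(q^{2r-2})$, while the missing degree of $G$ is only $O(n/\binom{q}{r-1})$, so the naive vertex-by-vertex greedy embedding fails by a polynomial-in-$q$ factor. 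The main obstacle is therefore to count rooted embeddings of each gadget by a structural method rather than a degeneracy bound, and this is precisely what the non-uniform \Turan theory of \cref{Subsec:NonUnifTuran} is designed to supply.

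The key observation is that the absorber gadgets of~\cite{DKP25} are built from $q$-partite layered structures, so after ``projecting out'' the root vertices one obtains a non-uniform $q$-partite hypergraph $F$, and the task of counting rooted embeddings of the gadget into $G$ becomes that of counting copies of $F$ in the non-uniform hypergraph $G'$ obtained from $G$ by analogously projecting out the root vertices. Using the non-uniform generalizations of the Erd\H{o}s--Stone--Simonovits theorem, of Erd\H{o}s--Simonovits supersaturation, and of the de~Caen--Sidorenko bounds for $K_q^r$ (the three \Turan-space properties promised in the overview), the minimum-degree hypothesis $\delta(G)\ge (1-c/\binom{q}{r-1})n$ translates, for each uniformity, into edge densities of $G'$ that sit safely inside the \Turan space $\TS(F)$, and hence into a supersaturated number of rooted copies. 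Choosing $c$ smaller than the \Turan-space constants arising from all finitely many gadget shapes in the absorber construction uniformly (this is where the stated constant $c=1/(2^{2r+2}\cdot 3^r\cdot r^{r+1})$ will come from) ensures the hypothesis holds simultaneously for every gadget type.

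Feeding these counts into the General Embedding Lemma yields a simultaneous embedding of all fake-edge gadgets and private absorbers into $G$ that is edge-disjoint from $X$ and from itself. Setting $A$ to be the union of $A_0\cap G$ with all embedded gadgets, and $\cQ_A$ to be the composition of $\cQ_{A_0}$ with the local decomposition rules prescribed by the gadgets, produces a $K_q^r$-omni-absorber for $X$ inside $G$. Each edge of $A\cup X$ lies in at most $O_{q,r}(C_0)$ cliques of $\cF_A$, and each embedded gadget contributes $O_{q,r}(1)$ to the degree of any fixed $(r-1)$-set, so $\Delta(A)\le O_{q,r}(\Delta(A_0))\le O_{q,r}(C_0\Delta)$. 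Absorbing all $q,r$-dependent factors into a single large constant $C$ yields the desired refinement and degree bounds. The hard part throughout is the third paragraph: making sure the non-uniform \Turan analysis yields the uniform dependence $c/\binom{q}{r-1}$ on the missing degree across all gadget shapes used by the absorbers of~\cite{DKP25}; everything else is a careful but essentially bookkeeping-level adaptation of the proof of~\cref{thm:RefinedEfficientOmniAbsorber} and of the embedding machinery of~\cite{EMeetsNW}.
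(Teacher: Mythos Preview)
Your proposal is correct and follows essentially the same approach as the paper. Two minor deviations worth noting: the paper replaces \emph{every} edge of $A_0$ by a fake-edge (not just those in $A_0\setminus G$), so the final $A$ contains no edge of $A_0$ whatsoever and the decomposition function is defined purely via the private absorbers; and the non-uniform clique bound the paper uses generalizes Spencer's alteration argument rather than de~Caen's, which the authors found easier to carry over to the non-uniform setting.
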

As discussed above, to prove~\cref{thm:NWRefinedEfficientOmniAbsorber}, we will use~\cref{thm:RefinedEfficientOmniAbsorber} as a black box, then re-embed fake-edges and private absorbers to ensure the edges used are present in $G$. Formalizing this idea requires the following definitions. 

\begin{definition}[Rooted Graph]
    Let $G$ be a hypergraph and $R$ be a subset of vertices from an arbitrary ground set. We say $G$ is \emph{$R$-rooted} (equivalently, \emph{rooted at $R$}) if $R \subseteq V(G)$ and $R$ is independent in $G$. 
\end{definition}

\begin{definition}[Embedding a Rooted Graph]
    Let $G$ be a hypergraph with $R \subseteq V(G)$ and let $H$ be an $R$-rooted hypergraph. An \emph{embedding} of $H$ \emph{into} $G$ is an edge-preserving map $\phi: V(H) \hookrightarrow V(G)$ such that $\phi(v) = v$ for all $v \in R$. 
\end{definition}

We also require that the gadgets we wish to embed satisfy the following natural condition.

\begin{definition}[Edge-Intersecting]
Let $J$ be a hypergraph. A $V(J)$-rooted hypergraph $H$ is \emph{$J$-edge-intersecting} if for all $e\in H$, there exists $f\in J$ such that $e\cap V(J) \subseteq f$.
\end{definition}

To re-embed the omni-absorber from~\cref{thm:RefinedEfficientOmniAbsorber}, we utilize the General Embedding Lemma from~\cite{EMeetsNW}. At a high level, it says that if (a) the number and size of the edge-intersecting, rooted gadgets we want to embed are relatively small, (b) the maximum degree of the subgraph induced by the root vertices is small, and (c) there are enough possible embeddings for each individual rooted gadget into $G$, then there exists an edge-disjoint embedding of our gadgets with low maximum degree. Since our fake-edge and absorber constructions are relatively small and their roots are in the refined omni-absorber from~\cref{thm:RefinedEfficientOmniAbsorber}, our main challenge is proving that the third property of many embeddings is satisfied. 

For an $r$-graph $H$ and $R\subseteq V(H)$, the \emph{degeneracy of $H$ rooted at $R$} is the smallest non-negative integer $d$ such that there exists an ordering $v_1,\ldots, v_{v(H)-|R|}$ of $V(H)\setminus S$ such that for all $i\in [v(H)-|R|]$, we have $|\{e\in E(H): v_i \in e \subseteq R\cup \{v_j: 1\le j \le i\}\}| \le d$.
Since fake-edges have rooted degeneracy at most $\binom{q-1}{r-1}$ while the $G$ in~\cref{thm:NWRefinedEfficientOmniAbsorber} has minimum degree strictly greater than $\left(1 - 1/\binom{q-1}{r-1}\right)\cdot n$, it is straightforward to show fake-edges have many possible embeddings in $G$ by embedding them one vertex at a time. However, the hypergraph absorbers constructed in~\cite{DKP25} have insufficient rooted degeneracy to establish Theorem~\ref{thm:NWRefinedEfficientOmniAbsorber} (indeed, using rooted degeneracy arguments alone, they would only embed in $G$ if $G$ has minimum degree at least $\left(1-\frac{c}{q^{2r-2}}\right)\cdot n$). Thus we will instead directly prove the following theorem via non-uniform Tur\'an theory. 

\begin{thm}\label{thm:CountingAbsorbers}
For every positive integer $r$, there exists a real $c >0$ such that the following holds. For every integer $q > r$, positive integer $t$, there exists a positive $n_0$ and a real $\gamma \in (0, 1]$ such that the following holds. Let $G$ be an $r$-graph on $n\ge n_0$ vertices with $\delta(G) \ge \left(1 - \frac{c}{\binom{q}{r-1}}\right)n$. If $L$ is a $K_q^r$-divisible subgraph of $G$ with $v(L) \le t$, then there exists an $L$-edge-intersecting $K_q^r$-absorber $A$ for $L$ such that there are at least $\gamma \cdot n^{|V(A)\setminus V(L)|}$ embeddings of $A$ into $G$.
\end{thm}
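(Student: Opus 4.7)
The plan is to use the explicit absorber construction of Delcourt--Kelly--Postle from~\cite{DKP25} and then count embeddings via the non-uniform Tur\'an theory developed in~\cref{Subsec:NonUnifTuran}. Fix an $L$-edge-intersecting $K_q^r$-absorber $A$ for $L$ from that construction; since $v(L) \le t$, one has $|V(A)| = O_{q,r,t}(1)$. Set $W := V(A)\setminus V(L)$. An embedding of $A$ into $G$ fixing $V(L)$ is a map $\phi:W\hookrightarrow V(G)\setminus V(L)$ such that for every $e\in A$, the set $(e\cap V(L))\cup \phi(e\cap W)$ is an edge of $G$, so the goal is to exhibit $\Omega(n^{|W|})$ such maps.

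First I would recast this as a labeled embedding problem. For each edge $e\in A$ write $R_e := e\cap V(L)$ and $e_W := e\setminus V(L)$, and define the \emph{root-projection} $A'$ of $A$ to be the labeled non-uniform hypergraph on $W$ with edge multiset $\{(R_e, e_W) : e\in A\}$. Because $A$ is $L$-edge-intersecting, every label $R_e$ is a subset of some edge of $L$, so only $O_{q,r,t}(1)$ distinct labels appear and each has size at most $r$. On the host side, for each $R\subseteq V(L)$ with $|R|\le r$ define $G_R := \{f\setminus R : f\in G,\ R\subseteq f\}$, an $(r-|R|)$-uniform hypergraph on $V(G)\setminus V(L)$. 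Then embeddings of $A$ into $G$ fixing $V(L)$ correspond exactly to label-preserving embeddings of $A'$ into the system $\{G_R\}_R$, so it suffices to count the latter.

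Next I would translate the minimum-degree bound into density at each label. For $R\subseteq V(L)$ with $|R|=s$, a standard double count from $\delta(G)\ge(1-c/\binom{q}{r-1})n$ gives that $G_R$ has edge density at least $1 - O_r(c)\cdot\binom{q-s}{r-s}^{-1}$ on $V(G)\setminus V(L)$, up to an additive $O(v(L)/n)$ error. Now the~\cite{DKP25} absorber is built from $q$-partite layered gadgets, so after projecting, $A'$ is a bounded blow-up of a small fixed labeled non-uniform pattern $F'$ whose edges sit inside partial $K_q^r$-cliques rooted at subsets of $V(L)$. By the non-uniform Erd\H{o}s--Stone--Simonovits result of~\cref{Subsec:NonUnifTuran}, the Tur\'an space of $A'$ agrees with that of $F'$, and by the non-uniform analogue of de~Caen's bound, every uniformity-$(r-s)$ slice of this Tur\'an space lies at or below $1 - \Theta_r(1)/\binom{q-s}{r-s}$. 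Choosing the absolute constant $c$ of the theorem small enough (depending on $r$ but not on $q$), the density point of $\{G_R\}_R$ lies strictly above the Tur\'an space of $A'$ in every slice, so the non-uniform supersaturation lemma from~\cref{Subsec:NonUnifTuran} yields $\gamma\cdot n^{|W|}$ label-preserving copies of $A'$, which are exactly the desired embeddings of $A$ into $G$.

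The main obstacle is the final density comparison: one must verify, \emph{uniformly in $q$}, that the Tur\'an threshold of each uniformity slice of the root-projected~\cite{DKP25} absorber is smaller than $1$ by at least $\Theta_r(1)/\binom{q-s}{r-s}$, matching the $\Theta_r(1)/\binom{q}{r-1}$ slack supplied by the minimum-degree hypothesis. This requires an explicit analysis of the partite structure of the gadgets in~\cite{DKP25} and of how they sit relative to $V(L)$ once projected, together with a quantitative version of the non-uniform Tur\'an and supersaturation results that is sharp enough to preserve a constant multiplicative gap across all $q$. Once that gap is secured, the count of embeddings follows directly from the supersaturation statement and yields the $\gamma$ in the theorem.
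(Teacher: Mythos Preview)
Your high-level strategy---take the DKP25 absorber and count rooted embeddings via the non-uniform Tur\'an theory of \cref{Subsec:NonUnifTuran}---matches the paper's, and indeed the paper's proof is the one-line combination of \cref{thm:PartiteAbsorberExistence} and \cref{lem:EmbedDegenPartite}. But your execution projects out \emph{all} of $V(L)$ at once, and that is where a genuine gap appears. The paper does not do this: it shows the absorber is $(2,q)$-\emph{rooted partite degenerate}, i.e.\ decomposes into layers each $q$-partite and rooted at at most $2q$ vertices from earlier layers, and then embeds one layer at a time (\cref{lem:EmbedDegenPartite}, inducting on the number of layers and invoking \cref{lem:EmbedOneCounting} at each step). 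The $O(q)$ bound on roots per layer is the crux: in the proof of \cref{lem:EmbedOneCounting}, the uniformity-$i$ density of the auxiliary host $G'$ is at least $1-c'/\binom{q}{i-1}$ precisely because one union-bounds over only $\binom{dq}{r-i}=O_{d,r}(q^{r-i})$ root subsets, and this is what lets $c$ depend on $r$ alone.

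Your global projection of $V(L)$ cannot achieve this with the tools at hand. If you work inside the paper's non-uniform framework (an $i$-set is present when it extends to an edge with \emph{every} $(r-i)$-subset of the roots), the relevant count becomes $\binom{|V(L)|}{r-i}$, and since in the intended application $|V(L)|$ is of order $q^{r+1}$, the resulting $c$ would be forced to depend on $q$, contradicting the statement. Your more refined labeled formulation, with one host $G_R$ per specific label $R$, avoids that density loss, but then you need a genuinely \emph{labeled} (colored) non-uniform Tur\'an and supersaturation theory, which \cref{Subsec:NonUnifTuran} does not supply; in particular your claim that the projected $A'$ is ``a bounded blow-up of a small fixed labeled pattern $F'$'' with uniformity-$(r-s)$ threshold $1-\Theta_r(1)/\binom{q-s}{r-s}$ is not justified and does not follow from the $q$-partiteness of individual gadgets (the $q$-partitions of different boosters and hinges need not be globally compatible). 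Two smaller points: your density bound for $G_R$ should read $1-O_r(c)/\binom{q}{r-1}$, not $1-O_r(c)/\binom{q-s}{r-s}$; and the paper generalizes Spencer's bound (\cref{lem:NonUnifSpencer}), not de Caen's, explicitly noting the latter seemed hard to adapt. The missing idea is the layered embedding.
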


To prove~\cref{thm:CountingAbsorbers}, we first show the existence of $K_q^r$-absorbers that satisfy a partite property, before exploiting this property to prove the existence of many embeddings of each absorber into $G$. A hypergraph $H$ is \emph{$q$-partite} if there exists a partition of $V(H) = V_1 \cup \ldots \cup V_q$ such that for every $e \in H$, we have $V(e) \cap V_i \le 1$ for each $i\in [q]$. Since the embedding of root vertices is fixed, we do not require the roots to be partite, motivating the following definition. 

\begin{definition}[Rooted $q$-Partite]
Let $G$ be an $R$-rooted hypergraph. We say $G$ is \emph{$R$-rooted $q$-partite} if $G[V(G)\setminus R]$ is $q$-partite.
\end{definition}

The following lemma shows that rooted partite hypergraphs have many possible embeddings into a high minimum degree host hypergraph. 

\begin{lem}\label{lem:EmbedOneCounting}
For every pair of positive integers $r$ and $d$, there exists a real $c >0$ such that the following holds. For every integer $q > r$, positive integers $m$ and $t$, there exists a positive $n_0$ and a real $\gamma \in (0, 1]$ such that the following holds. Let $G$ be an $r$-graph on $n\ge n_0$ vertices with $\delta(G) \ge \left(1 - \frac{c}{\binom{q}{r-1}}\right)n$ and let $R \subseteq V(G)$ with $|R| \le d\cdot q$. If $H$ is an $R$-rooted, $q$-partite hypergraph with $v(H) \le t$, then there are at least $\gamma \cdot n^{|V(H) \setminus R|}$ embeddings of $H$ into $G$.
\end{lem}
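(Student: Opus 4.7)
The strategy is to translate the embedding problem into a counting problem in a non-uniform partite hypergraph, then apply the non-uniform Turán theory developed later in the paper.

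First, I would project out the roots. For each subset $S \subseteq R$ with $|S| \le r - 1$, define the $(r - |S|)$-uniform link $G_S$ on $V(G) \setminus R$ by $G_S := \{T : S \cup T \in E(G)\}$. A straightforward averaging argument using $\delta(G) \ge (1 - c/\binom{q}{r-1})n$ shows that each $G_S$ has edge-density at least $1 - O_r(c/\binom{q}{r-1})$ in its uniformity.

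Next, an embedding $\phi$ of $H$ into $G$ is precisely an injection $V(H) \setminus R \hookrightarrow V(G) \setminus R$ such that for every $e \in E(H)$, writing $S_e := e \cap R$ and $T_e := e \setminus R$, we have $\phi(T_e) \in G_{S_e}$. Let $H^*$ be the labeled non-uniform hypergraph on $V(H) \setminus R$ with edge set $\{(T_e, S_e) : e \in E(H)\}$; since $H$ is $R$-rooted $q$-partite, the non-root part of $H^*$ is $q$-partite. Embeddings of $H$ into $G$ correspond bijectively to label-preserving copies of $H^*$ in the collection $(G_S)_{S \subseteq R}$.

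Third, I would apply the three key results of non-uniform Turán theory. The blowup theorem reduces the problem to the reduced labeled hypergraph $H^*_0$, obtained by collapsing each part of $V(H) \setminus R$ to a single representative; $H^*_0$ has at most $q$ non-root vertices and so its Turán space does not grow with $t$. The third Turán result (the existence of a point in the Turán space realized by high minimum degree hypergraphs) then implies, after choosing $c = c(r, d)$ sufficiently small, that the density tuple of $(G_S)_{S \subseteq R}$ lies inside the Turán space of $H^*_0$, and hence of $H^*$. Finally, the non-uniform supersaturation theorem yields at least $\gamma \cdot n^{|V(H) \setminus R|}$ label-preserving copies of $H^*$ for some $\gamma = \gamma(q, r, t, d) > 0$, each of which corresponds to an embedding of $H$ into $G$.

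The main obstacle is the application of the third Turán result: we need the Turán space of the reduced partite labeled hypergraph $H^*_0$ to contain a density tuple of the form $(1 - \Omega_{r,d}(1/\binom{q}{r-1}))$ in each uniformity. This matching of the Turán density scale with the minimum degree scale of $G$ is exactly what allows $c$ to depend only on $r$ and $d$, and it is the primary content of the non-uniform Turán theory developed later in the paper. The blowup theorem is needed to eliminate dependence on $t$, while a non-uniform analog of de Caen's bound for $K_q^r$ provides the correct $1/\binom{q}{r-1}$ scaling.
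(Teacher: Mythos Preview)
Your proposal is on the right track---project out the roots, then apply the three non-uniform Tur\'an tools (supersaturation, blowup, and the existence of a suitable Tur\'an point)---and this is indeed the paper's strategy. However, you diverge from the paper at one key step and this creates a gap.

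You work with a \emph{labeled} collection $(G_S)_{S\subseteq R}$ and seek label-preserving copies of a labeled hypergraph $H^*$. The paper's Tur\'an theory (\cref{lem:supersaturation,lem:BlowUp,lem:NonUnifSpencer}) is stated only for a single unlabeled $r$-bounded hypergraph, so your approach would require labeled analogues that the paper never develops. The paper sidesteps this entirely by an ``intersection over all root subsets'' trick: it defines a single unlabeled $r$-bounded hypergraph $G'$ in which an $i$-set $e\subseteq V(G)\setminus R$ is an edge if and only if $e\cup f\in E(G)$ for \emph{every} $f\in\binom{R}{r-i}$. Then an embedding of the unlabeled projection $H'$ (with $E(H')=\{e\setminus R:e\in E(H)\}$) into $G'$ automatically yields a rooted embedding of $H$ into $G$. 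Since $H'$ is $q$-partite with $v(H')\le t$, it suffices to find many copies of $K_q^{[r]}(t)$ in $G'$, which the unlabeled theory handles directly. The cost of the intersection is a density loss of a factor $\binom{|R|}{r-i}\le\binom{dq}{r-i}$ at uniformity $i$; a short binomial computation shows this still leaves density at least $1-c'/\binom{q}{i-1}$ once one takes $c=c'/(2(dr)^r)$, matching the Tur\'an point from \cref{lem:NonUnifSpencer}.

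One minor inaccuracy: the paper uses a non-uniform generalization of Spencer's probabilistic alteration argument, not de Caen's bound. The paper explicitly remarks that generalizing de Caen to the non-uniform setting seemed non-trivial, whereas Spencer's method extends easily and still gives the correct $1/\binom{q}{i-1}$ scaling at each uniformity.
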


To prove~\cref{thm:NWRefinedEfficientOmniAbsorber}, we utilize the absorber construction from Delcourt, Kelly, and the second author in~\cite{DKP25}, which are built by layering rooted $q$-partite gadgets. The partiteness of these gadgets is implicit in~\cite{DKP25}, but we nevertheless explicitly formalize this in~\cref{sec:Absorber} since we also need to show the existence of edge-intersecting absorbers to utilize the General Embedding Lemma. 

In order to prove~\cref{lem:EmbedOneCounting} and its generalization to layered partite hypergraphs, such as the absorbers from~\cite{DKP25}, we establish a non-uniform \Turan Theory as described in the next subsection. The relationship between a rooted Tur\'an theory as in~\cref{lem:EmbedOneCounting} and a non-uniform Tur\'an theory is as follows. Namely, the existence of one embedding of $H$ into $G$ rooted at $R$ would follow from finding a copy $H'$ of a truncated non-uniform version of $H$ (i.e.~an $i$ set is in $H'$ if and only if it extends to an edge with some $(r-i)$-set of the roots) in the non-uniform hypergraph $G'$ consisting of the $i$-sets $S$ for each $i \in [r]$ where $S$ extends to an edge with every set of $r-i$ roots. We note this is more than desired since finding such a copy in fact finds the truncated version of $H$ where each $i$-set forms an edge with \emph{all} $(r-i)$ subsets of the roots. It is important to note that the edge densities of the different uniformities will vary in $G'$ but are related to the minimum degree of $G$ (namely the edge-density of the $i$th uniformity of $G'$ would be at least $1-\frac{c}{q^{i-1}}$).

\subsection{Non-Uniform Tur\'an Theory}\label{Subsec:NonUnifTuran}

Here we establish a non-uniform \Turan Theory that encompasses these varying edge densities. We note that various authors over the years have sought to establish a Tur\'an theory for non-uniform hypergraphs (or equivalently simplicial complexes), which broadly fall into two categories as we now discuss, but these theories are not sufficient for our purposes. The first stream dates to the work of Frankl~\cite{F83} in 1983 and was first systematically studied by Conlon, Piga, and Sch{\"u}lke~\cite{CPS23} in 2023. However, in this stream the definition of Tur\'an number for an $r$-bounded hypergraph $F$ is the maximum number of edges of an $F$-free $r$-bounded hypergraph, which means that said numbers are dominated by edges of the largest uniformity. A second stream of results concern the Lubell function arising in the theory of posets (we refer the reader to the work of Johnston and Lu~\cite{JL14} in 2014 for a systematic study). In this stream, the Lubell function of an $r$-bounded hypergraph $F$ is defined as the maximum number of the sum of the edge densities of the different uniformities of an $F$-free $r$-bounded hypergraph. This scaling by density alleviates the issue of the largest uniformity dominating as in the first stream. However, we do not desire results about one single number but rather the whole space of densities which guarantee a copy of $F$. Thus we were led to develop our theory of a \Turan space for non-uniform hypergraphs as follows.

Recall that the \emph{\Turan number $ex(n, F)$} of an $r$-graph $F$ is the maximum number of edges in an $F$-free $r$-graph on $n$ vertices. Similarly, the \emph{\Turan density of $F$} is the limit of $ex(n, F)/\binom{n}{r}$ as $n$ goes to infinity.

For a hypergraph $G$, let $G^{(i)}$ denote the subhypergraph containing exactly the edges of uniformity $i$ in $G$. Recall that a hypergraph $G$ is \emph{$r$-bounded} if every edge contains at most $r$ vertices.

\begin{definition}[Tur\'an Space]       
Let $F$ be an $r$-bounded hypergraph. We say $(\alpha_1, \ldots, \alpha_r)\in [0,1]^r$ is a \emph{Tur\'an point of $F$} if for every $\varepsilon > 0$, there exists $n_0$ such that for all integers $n\ge n_0$ the following holds: if $G$ is a hypergraph on $n$ vertices such that $e(G^{(i)})\geq (\alpha_i+\varepsilon) \binom{n}{i}$ for all $i \in [r]$, then $G$ contains a copy of $F.$ The \emph{Tur{\'a}n space of $F$}, denoted $\TS(F)$, is the closure of the set of all Tur{\'a}n points of $F$.
\end{definition}

Discovered by \Erdos and Simonovits \cite{ES82}, the notion of supersaturation is a standard result in \Turan theory. Informally, it states that if a graph $G$ is above the \Turan density of a graph $F$, then $G$ contains not only one copy of $F$ but many. We prove that supersaturation holds in the non-uniform setting as follows (e.g.~generalizing the classical result such as Lemma 2.1 in \cite{K11}). 

\begin{lem}[Non-Uniform Supersaturation]\label{lem:supersaturation}
    For every $r$-bounded hypergraph $F$ and real $\varepsilon > 0$, there exists a real $\gamma > 0$ and integer $n_0$ such that for every $n \geq n_0$ the following holds: if $(\alpha_1, \ldots, \alpha_r) \in \TS(F)$ and $G$ is a hypergraph on $n$ vertices such that $e(G^{(i)}) \geq (\alpha_i + \varepsilon)\binom{n}{i}$ for every $i \in [r]$, then there exist at least $\gamma \cdot \binom{n}{v(F)}$ copies of $F$ in $G.$
\end{lem}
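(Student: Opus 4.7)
The plan is to prove \cref{lem:supersaturation} by the classical sampling approach: fix a uniformly random $m$-vertex subset $S$ of $V(G)$ for a suitable constant $m = m(F, \varepsilon, r)$, argue via concentration that with probability at least $1/2$ the induced subhypergraph $G[S]$ still satisfies $e(G[S]^{(i)}) \ge (\alpha_i + 3\varepsilon/4)\binom{m}{i}$ in every uniformity $i$, apply the Turán-point property to conclude that $G[S]$ contains a copy of $F$ on this event, and finally double-count to convert the many copies in random induced subhypergraphs into many copies in $G$ itself.

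The first step is to check that $\TS(F)$ is upward-closed in $[0,1]^r$: if $(\alpha_i)$ is a Turán point of $F$ and $\beta_i \ge \alpha_i$ coordinate-wise, then any hypergraph $H$ with $e(H^{(i)}) \ge (\beta_i + \varepsilon')\binom{m}{i}$ also satisfies $e(H^{(i)}) \ge (\alpha_i + \varepsilon')\binom{m}{i}$ and hence contains $F$, so $(\beta_i)$ is a Turán point, and taking closures preserves this. In particular $(\alpha_i + \varepsilon/2) \in \TS(F)$ (any coordinate that would exceed $1$ makes the hypothesis on $G$ unsatisfiable in that uniformity and hence vacuous). By the closure definition of $\TS(F)$, we may pick a genuine Turán point $(\beta_i)$ with $\beta_i \le \alpha_i + 5\varepsilon/8$ for every $i$, and then apply its Turán-point property with slack $\varepsilon/8$ to obtain an integer $m_0 = m_0(F, \varepsilon, r)$ such that every hypergraph $H$ on $m \ge m_0$ vertices satisfying $e(H^{(i)}) \ge (\alpha_i + 3\varepsilon/4)\binom{m}{i}$ for all $i \in [r]$ contains a copy of $F$.

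Fix $m \ge m_0$ large enough for the concentration step below, and sample $S$ uniformly among the $m$-subsets of $V(G)$. Linearity of expectation gives $\mathbb{E}[e(G[S]^{(i)})] = e(G^{(i)}) \cdot \binom{m}{i}/\binom{n}{i} \ge (\alpha_i + \varepsilon)\binom{m}{i}$ for each $i$. Viewing $S$ as a uniformly random $m$-sequence of distinct vertices, swapping a single entry changes $e(G[S]^{(i)})$ by at most $2\binom{m-1}{i-1}$, so McDiarmid's inequality for sampling without replacement yields
\[ \Pr\!\left[e(G[S]^{(i)}) < (\alpha_i + 3\varepsilon/4)\binom{m}{i}\right] \le 2\exp\!\left(-\Omega_{\varepsilon,r}(m)\right). \]
Choosing $m$ large in terms of $F$, $\varepsilon$, and $r$, a union bound over $i \in [r]$ shows that with probability at least $1/2$ the sampled $S$ satisfies every density bound simultaneously, in which case $G[S]$ contains a copy of $F$ by the choice of $m_0$.

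Hence at least $\binom{n}{m}/2$ of the $m$-subsets $S \subseteq V(G)$ induce a subhypergraph containing a copy of $F$. Any subgraph copy of $F$ in $G$ lies in $G[S]$ iff $V(F) \subseteq S$, so it contributes to exactly $\binom{n - v(F)}{m - v(F)}$ such subsets, whence
\[ \#\{\text{copies of } F \text{ in } G\} \ge \frac{\binom{n}{m}/2}{\binom{n - v(F)}{m - v(F)}} = \frac{\binom{n}{v(F)}}{2\binom{m}{v(F)}}, \]
so taking $\gamma := 1/(2\binom{m}{v(F)})$ finishes the proof. The only genuine subtlety is that $\TS(F)$ is defined as a closure rather than a set of bona fide Turán points, which is handled by the upward-closure observation above to pass to a true Turán point $(\beta_i)$ slightly dominating $(\alpha_i + \varepsilon/2)$; the remaining ingredients are the standard McDiarmid-type concentration bound and the double-counting identity.
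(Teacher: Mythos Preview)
Your proof is correct and follows essentially the same approach as the paper: sample a random $m$-subset, show via concentration that most subsets inherit the density bounds in every uniformity, apply the Tur\'an-point property to find a copy of $F$ in each good subset, and double-count. The only technical differences are that the paper packages the concentration step into an auxiliary lemma (\cref{lem:e(G[S])large}, proved in the appendix via Chebyshev) rather than invoking McDiarmid directly, and that you are more explicit about passing from a point in the closure $\TS(F)$ to a genuine Tur\'an point---the paper glosses over this, simply asserting the existence of the requisite $k_0$.
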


Recall that the \emph{$t$-blow-up} of hypergraph $F$, denoted by $F(t),$ is the graph obtained from $F$ by replacing every vertex $v \in V(F)$ with an independent set $U_v$ of size $t$ and every edge $e = \{v_1, \ldots, v_r\} \in E(F)$ with a complete $r$-partite $r$-uniform hypergraph spanned by $U_{v_1}, \ldots, U_{v_r}.$
We will prove the following theorem about blowing up in the non-uniform setting (e.g.~generalizing Theorem 2.2 in \cite{K11}).

\begin{lem}[Non-Uniform Blowing Up]\label{lem:BlowUp}
    Let $F$ be an $r$-bounded hypergraph and let $t$ be a positive integer. If $(\alpha_1, \ldots, \alpha_r) \in \TS(F),$ then $(\alpha_1, \ldots, \alpha_r) \in \TS(F(t)).$
\end{lem}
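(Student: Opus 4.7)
The plan is to combine the non-uniform supersaturation lemma (\cref{lem:supersaturation}) with the classical Erd\H{o}s theorem on complete multipartite $k$-uniform hypergraphs. Fix $(\alpha_1,\dots,\alpha_r)\in\TS(F)$ and a positive integer $t$; let $k:=v(F)$ and enumerate $V(F)=\{v_1,\dots,v_k\}$. To show $(\alpha_1,\dots,\alpha_r)\in\TS(F(t))$, let $\varepsilon>0$ and let $G$ be a hypergraph on $n$ vertices with $e(G^{(i)})\geq(\alpha_i+\varepsilon)\binom{n}{i}$ for every $i\in[r]$. By \cref{lem:supersaturation} applied with parameter $\varepsilon$, there is a constant $\gamma>0$ (depending on $F$ and $\varepsilon$) such that, for $n$ large enough, $G$ contains at least $\gamma\binom{n}{k}$ copies of $F$, each viewed as an injective edge-preserving map $\phi:V(F)\hookrightarrow V(G)$.

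Next, construct an auxiliary $k$-uniform $k$-partite hypergraph $\mathcal{M}$ on vertex set $V_1\sqcup\cdots\sqcup V_k$, where each $V_j$ is a labeled copy of $V(G)$. For each copy $\phi$ add the edge $\{\phi(v_1)^{(1)},\dots,\phi(v_k)^{(k)}\}$, where superscripts track part membership. Distinct copies of $F$ give distinct edges of $\mathcal{M}$ (labels on parts prevent collapse), so $|E(\mathcal{M})|\geq\gamma'n^k$ for some $\gamma'>0$. For $n$ sufficiently large (in terms of $k$ and $t$), the classical multipartite Erd\H{o}s theorem yields a complete $k$-partite sub-hypergraph $K^{(k)}(t,\dots,t)$ inside $\mathcal{M}$, with parts $U_j\subseteq V_j$ of size $t$.

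Finally, interpret $U_1,\dots,U_k$ back as subsets of $V(G)$. Every tuple $(u_1,\dots,u_k)\in U_1\times\cdots\times U_k$ is an edge of $\mathcal{M}$ and hence corresponds to an injective copy of $F$ in $G$; in particular $u_1,\dots,u_k$ are pairwise distinct in $V(G)$, which forces $U_1,\dots,U_k$ to be pairwise disjoint as subsets of $V(G)$. Now, for any edge $e=\{v_{i_1},\dots,v_{i_s}\}\in E(F)$ and any choice $(u_{i_1},\dots,u_{i_s})\in U_{i_1}\times\cdots\times U_{i_s}$, extend arbitrarily to a full tuple in $U_1\times\cdots\times U_k$: the corresponding copy of $F$ in $G$ witnesses that $\{u_{i_1},\dots,u_{i_s}\}\in E(G)$. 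This exhibits a copy of the blow-up $F(t)$ inside $G[U_1\cup\cdots\cup U_k]$, as required.

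The main obstacle is conceptual rather than computational: recognizing that the multipartite Erd\H{o}s theorem applies uniformly in the auxiliary hypergraph $\mathcal{M}$ even though $F$ itself is non-uniform, and that injectivity of the copies of $F$ automatically forces disjointness when the parts of the $K^{(k)}(t,\dots,t)$ are projected back to $V(G)$. Once these are observed, no further non-uniform input is needed beyond the supersaturation lemma to handle the blow-up. One minor calculation is to verify that the loss factors from passing to $\mathcal{M}$ (which are absolute constants depending only on $k$) do not interfere with applying the Erd\H{o}s theorem, but this is routine.
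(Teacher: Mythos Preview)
Your proof is correct and follows a somewhat different route from the paper's. Both arguments begin identically, invoking non-uniform supersaturation (\cref{lem:supersaturation}) to obtain $\gamma\binom{n}{k}$ copies of $F$. The paper then constructs the $F$-design hypergraph on $V(G)$, applies Erd\H{o}s's theorem to find a complete $k$-partite $k$-uniform subgraph with $t'$ vertices per part where $t' = R_{k!}(K_{k*t}^k)$, colours its edges according to which permutation of $[k]$ realises each copy of $F$, and finally applies Rado's partite Ramsey theorem (\cref{Thm:PartiteHGRamsey}) to extract a monochromatic $K_{k*t}^k$ yielding $F(t)$.

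Your approach instead builds a labelled $k$-partite auxiliary hypergraph $\mathcal{M}$ from the outset, so that the part structure already records which vertex of $F$ maps where. Applying the multipartite form of Erd\H{o}s's theorem directly to $\mathcal{M}$ produces $K^{(k)}(t,\dots,t)$ with $U_j \subseteq V_j$, and the injectivity of each copy of $F$ forces the projected $U_j$ to be pairwise disjoint in $V(G)$. This entirely sidesteps the Ramsey step: the colouring-and-Ramsey argument in the paper is precisely what compensates for having forgotten the labelling when passing to the (unlabelled) design hypergraph. Your route is marginally more elementary in that it avoids \cref{Thm:PartiteHGRamsey}, at the cost of needing the partite version of Erd\H{o}s's theorem rather than the plain one; both versions are standard and of comparable strength, so this is a genuine but minor simplification.
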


Finally, let $K_n^{[r]}$ denote the \textit{$r$-bounded non-uniform complete graph on $n$ vertices}, that is, the hypergraph with $V(K_n^{[r]}) = [n]$ and $E(K_n^{[r]}) = \bigcup_{i \in [r]} E(K_n^i)$. 
We aim to show that our minimum degree condition suffices to guarantee that our hypergraph contains a copy of $K_n^{[r]}$. To that end, we aim to prove there exists a point $(\alpha_1, \alpha_2, \ldots, \alpha_r) \in \TS(K_n^{[r]})$ where $\alpha_i$ is on the order of $1 - \frac{1}{\binom{q}{i-1}}$ for each $i \in [r]$. As mentioned in the introduction, the best known general upper bound for the Tur\'an number of $K_q^r$ is a result of de Caen~\cite{dC83} from 1983 with a bound $1 - \frac{1}{\binom{q-1}{r-1}}$. De Caen's proof generalized to hypergraphs the earlier work of Moon and Moser~\cite{MM65} in 1965 on graphs. It seemed non-trivial to us to attempt to generalize de Caen's work to the non-uniform setting. Instead as we only desire a result of the correct order, we look to the earlier (now often overlooked) work of Spencer~\cite{S72} from 1972 who proved an upper bound on the Tur\'an number of $1 - \frac{r!}{r^r}\left(\frac{r-1}{q-1}\right)^{1-r}$. Note that for large $q$ this is only a factor of $e$ worse than de Caen's bound. Spencer's proof, which only used the method of alterations, is much easier to generalize to our non-uniform setting as follows.

\begin{lem}[Non-Uniform Spencer's]\label{lem:NonUnifSpencer}
For every integer $r \ge 1$, there exists a real $c >0$ such that the following holds. Let $q > r$ be an integer. If $\alpha_i = 1 - \frac{c}{\binom{q}{i-1}}$ for every $i \in [r]$, then $(\alpha_1, \alpha_2, \ldots, \alpha_r)~\in~\TS(K_q^{[r]})$. 
\end{lem}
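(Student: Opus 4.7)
The plan is to adapt Spencer's~\cite{S72} deletion-method proof of his upper bound on $\mathrm{ex}(n, K_q^r)$ to the non-uniform Tur{\'a}n setting. Fix $\varepsilon > 0$ and let $G$ be an $r$-bounded hypergraph on sufficiently many vertices $n$ with $e(G^{(i)}) \ge (\alpha_i + \varepsilon)\binom{n}{i}$ for every $i \in [r]$. Set $p := 2q/n$ (valid for $n \ge 2q$), and sample $S \subseteq V(G)$ by including each vertex independently with probability $p$. For every non-edge $T$ of $G$ (that is, $T \notin E(G^{(|T|)})$ with $|T| \in [r]$) contained in $S$, mark one arbitrary vertex of $T$, and let $S'$ be the set of unmarked vertices. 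By construction $S'$ contains no non-edge of $G$ of uniformity in $[r]$, so any $q$-subset of $S'$ induces a copy of $K_q^{[r]}$; hence it suffices to find a realization with $|S'| \ge q$.

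The expected size of $S'$ is easy to control. By linearity, $E[|S|] = pn = 2q$, and since each deleted vertex is charged to some non-edge of $G$ contained in $S$,
\[
E[|S|] - E[|S'|] \le \sum_{i=1}^{r} p^i (1 - \alpha_i - \varepsilon)\binom{n}{i} < c \sum_{i=1}^{r} \frac{(2q)^i}{i!\binom{q}{i-1}},
\]
using $1 - \alpha_i = c/\binom{q}{i-1}$ and $\binom{n}{i} \le n^i/i!$. Establishing that this arithmetic sum is at most $q \cdot C_r$ for some constant $C_r$ depending only on $r$ then gives $E[|S'|] \ge 2q - q = q$ upon choosing $c := 1/C_r$, and the probabilistic method produces a valid realization.

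The main obstacle is the arithmetic bound, uniform in $q > r$. The trick is to rewrite each summand as $\tfrac{2^i q}{i \cdot \prod_{j=0}^{i-2}(1 - j/q)}$ and split into two regimes. For $q \ge 2r$, every factor $1 - j/q$ is at least $1/2$ (since $j \le r - 2$), so each term is at most $2^{2i-1}q/i$ and the sum is $O(4^r q)$. For the finitely many remaining values $r < q < 2r$ (with $r$ fixed), every term is bounded by a constant depending only on $r$, while $q \ge r+1$ ensures the sum is at most $q \cdot C_r'$ for a suitable constant $C_r'$. Taking $c = c(r)$ to be the minimum of $1/C_r$ and $1/C_r'$ completes the argument, yielding an admissible $c$ of order roughly $4^{-r}$. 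All other steps---the probabilistic setup, the deletion bookkeeping, and deducing the existence of a realization with $|S'| \ge q$ from $E[|S'|] \ge q$---are routine; the only genuinely non-trivial ingredient is ensuring that $C_r$ does not blow up for small $q$, which is exactly what the case split addresses.
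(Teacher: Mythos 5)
Your proposal is correct and follows essentially the same route as the paper: both apply Spencer's probabilistic alteration with $p = 2q/n$, deleting one vertex for each non-edge of each uniformity landing in the random set (the paper phrases this in the complement, deleting one vertex per edge) and lower-bounding the expected size of the surviving set. The only difference is the arithmetic bound on $\sum_{i\in[r]} p^i(1-\alpha_i)\binom{n}{i}$: where you split into the regimes $q \ge 2r$ and $r < q < 2r$, the paper avoids the case split by using the uniform estimates $\binom{q}{i-1} \ge (q/(i-1))^{i-1}$ and $\binom{n}{i} \le (en/i)^i$, which give the admissible constant $c = \frac{1}{r\cdot 6^r}$ directly for all $q > r$.
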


The proofs of our Non-Uniform Supersaturation, Non-Uniform Blowing-up, and Non-Uniform Spencer's appear in~\cref{Subsec:Spencer}. We then use these to prove~\cref{lem:EmbedOneCounting} in~\cref{Subsec:RootedTuran}.

\begin{remark}
We note for the interested reader that an easier version of~\cref{lem:NonUnifSpencer} with a worse bound of $1-\frac{c}{\binom{q}{i}}$ appeared in the work of Delcourt and the second author in their proof of the Existence Conjecture~\cite{DPI} which thus would yield a bound of $1-\frac{c}{q^r}$ in our main theorems when combined with the rest of our work.     
\end{remark}

In order to prove~\cref{thm:CountingAbsorbers}, we generalize~\cref{lem:EmbedOneCounting} to work for our absorbers constructed from layered rooted partite graphs. To that end, we require the following definition. 

\begin{definition}[$(d, q)$-Rooted Partite Degeneracy]\label{def:PartiteDegeneracy}
    Let $d$ and $q$ be positive integers. An $R$-rooted hypergraph $G$ is \emph{$(d, q)$-rooted partite degenerate} if there exists an $m$-partition $V(G)\setminus R =V_1 \cup \ldots \cup V_m$ and for each $i \in [m]$, there exists a $q$-partition $V_i = V_{i, 1} \cup \ldots \cup V_{i,q}$ such that the following holds. For each $i\in [m]$, let $E_i := \{e \in G : V(e) \cap V_i \neq \emptyset \text{ and } \forall j > i, V(e) \cap V_j = \emptyset\}$.
    \begin{itemize}
        \item  For each $i \in [m]$, every $e \in E_i$, and each $j \in [q]$, we have $|V(e) \cap V_{i, j}| \le 1$; and
        \item For each $i \in [m]$, we have $|\bigcup_{e \in E_i}V(e) \setminus V_i| \le d \cdot q$.
    \end{itemize}
\end{definition}

The following lemma proves that there are many embedding options for each hypergraph absorber into $G$, which we use to prove~\cref{thm:CountingAbsorbers}.

\begin{lem}\label{lem:EmbedDegenPartite}
For every pair of positive integers $r$ and $d$, there exists a real $c >0$ such that the following holds. For every integer $q > r$, positive integers $m$ and $t$, there exists a positive $n_0$ and a real $\gamma \in (0, 1]$ such that the following holds. Let $G$ be an $r$ graph on $n\ge n_0$ vertices with $\delta(G) \ge \left(1 - \frac{c}{\binom{q}{r-1}} \right)n$ and let $R \subseteq V(G)$ with $|R| \le t$. If $H$ is an $R$-rooted, $(d, q)$-rooted partite degenerate hypergraph with respect to the partition $V(H)\setminus R = V_1 \cup \ldots \cup V_m$ and $|V_i| \le t$ for all $i \in [m],$ then there are at least $\gamma \cdot n^{|V(H) \setminus R|}$ embeddings of $H$ into $G$.
\end{lem}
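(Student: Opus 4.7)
The plan is to prove \cref{lem:EmbedDegenPartite} by induction on the number of layers $m$, using \cref{lem:EmbedOneCounting} as a black-box tool at each step. Crucially, the constant $c$ will be exactly the one from \cref{lem:EmbedOneCounting} for the given parameters $r$ and $d$. Since that $c$ depends only on $r$ and $d$ (and not on $q$, $m$, or $t$), the same minimum degree hypothesis on $G$ is sufficient to apply \cref{lem:EmbedOneCounting} at every layer, which is the key to making the induction go through.

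The base case $m = 1$ is essentially \cref{lem:EmbedOneCounting}. Given the partition $V(H) \setminus R = V_1$ with its $q$-parts $V_{1,1}, \ldots, V_{1,q}$, define $R_1 := \bigcup_{e \in E_1} V(e) \setminus V_1$. By \cref{def:PartiteDegeneracy}, $|R_1| \le d \cdot q$ and $R_1 \subseteq R$. The subhypergraph $H_1$ on vertex set $R_1 \cup V_1$ with edge set $E_1$ is $R_1$-rooted $q$-partite with $|V(H_1)| \le t + dq$, so \cref{lem:EmbedOneCounting} yields at least $\gamma_1 \cdot n^{|V_1|}$ embeddings of $H_1$ into $G$ fixing $R_1$. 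Among these, discard the $O(|R \setminus R_1| \cdot n^{|V_1|-1}) = O(t \cdot n^{|V_1|-1})$ embeddings that map some vertex of $V_1$ into $R \setminus R_1$; the remainder gives at least $\tfrac{\gamma_1}{2} \cdot n^{|V(H) \setminus R|}$ embeddings of $H$ into $G$ for $n$ large enough.

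For the inductive step, let $H' := H[R \cup V_1 \cup \cdots \cup V_{m-1}]$ with edge set $E_1 \cup \cdots \cup E_{m-1}$. Since edges in $E_i$ for $i < m$ do not touch $V_m$, the parameters $E_i$ and the associated bounds for $H'$ are identical to those for $H$, so $H'$ is $(d,q)$-rooted partite degenerate with $m-1$ layers. By the inductive hypothesis, there are at least $\gamma' \cdot n^{|V(H') \setminus R|}$ embeddings of $H'$ into $G$, for some $\gamma' > 0$ depending on $r, d, q, m-1, t$. Fix any such embedding $\phi'$. Set $R_m := \bigcup_{e \in E_m} V(e) \setminus V_m \subseteq R \cup V_1 \cup \cdots \cup V_{m-1}$, so $|R_m| \le d \cdot q$ by \cref{def:PartiteDegeneracy}. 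Let $H_m$ be the $\phi'(R_m)$-rooted $q$-partite hypergraph on $\phi'(R_m) \cup V_m$ obtained from $E_m$ by relabeling roots via $\phi'$. Applying \cref{lem:EmbedOneCounting} to $H_m$ with root set $\phi'(R_m)$ yields at least $\gamma'' \cdot n^{|V_m|}$ embeddings; discarding those intersecting the already-used set $\phi'(V(H'))$ of size at most $t + (m-1)t$ loses only $O(mt \cdot n^{|V_m|-1}) = o(n^{|V_m|})$ embeddings. Each surviving embedding glues with $\phi'$ to a valid embedding of $H$, producing at least $\tfrac{\gamma''}{2} \cdot n^{|V_m|}$ extensions per $\phi'$, and hence at least $\tfrac{\gamma' \gamma''}{2} \cdot n^{|V(H) \setminus R|}$ embeddings of $H$ in total. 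Setting $\gamma := \tfrac{\gamma' \gamma''}{2}$ closes the induction.

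The main obstacle is administrative rather than technical: one must verify that the constant $c$ does not degrade under iteration. This is exactly why it is important that $c$ in \cref{lem:EmbedOneCounting} depends only on $r$ and $d$ (not on $q$ or on the root-set size parameter), so that applying it to the layer-by-layer root sets $\phi'(R_m)$ of bounded size $\le dq$ uses the same $c$ every time. All the losses from vertex collisions and from the accumulated number of layers are absorbed into the parameter $\gamma$, which is allowed to depend on $r, d, q, m, t$ per the statement; no condition on $m$ beyond finiteness is needed.
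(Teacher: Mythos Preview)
Your proof is correct and follows essentially the same approach as the paper: induction on $m$ with \cref{lem:EmbedOneCounting} applied to each layer, using that the constant $c$ from \cref{lem:EmbedOneCounting} depends only on $r$ and $d$. The only technical difference is that the paper deletes the already-embedded vertices from $G$ and re-verifies the minimum degree condition on the smaller graph, whereas you apply \cref{lem:EmbedOneCounting} to the full $G$ and then discard the $O(mt\cdot n^{|V_m|-1})$ embeddings that collide with previously used vertices; your variant is arguably cleaner and also makes the base case (where $|R|\le t$ may exceed $dq$) more explicit than the paper does.
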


We also prove~\cref{lem:EmbedDegenPartite} in~\cref{Subsec:RootedTuran}. In light of~\cref{lem:EmbedDegenPartite}, in order to prove~\cref{thm:CountingAbsorbers}, it suffices to prove the existence of rooted partite degenerate absorbers, as follows. 

\begin{thm}\label{thm:PartiteAbsorberExistence}
     For every integers $q > r\ge 1$, if $L$ is a $K_q^r$-divisible $r$-graph, then there exists a $(2, q)$-rooted partite degenerate $L$-edge-intersecting $K_q^r$-absorber for $L$.
\end{thm}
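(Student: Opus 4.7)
The plan is to explicitly construct the absorber using the layered gadget machinery developed by Delcourt, Kelly, and Postle in~\cite{DKP25}, and then verify that the natural layer partition witnesses the $(2,q)$-rooted partite degeneracy condition as well as the edge-intersecting property. Recall that the DKP25 absorber is assembled from a sequence of elementary ``trade'' gadgets: each gadget introduces a fresh copy of $K_q^r$ (minus one edge, or configured so that two distinct $K_q^r$-decompositions of its edge set differ on a controlled set of edges) attached to at most two previously built cliques. Iterating these gadgets transforms a $K_q^r$-divisible graph $L$ into an edge-disjoint graph $A$ such that $A$ and $A\cup L$ each admit $K_q^r$-decompositions, with every gadget adding a $q$-partite set of new vertices.

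First, I would fix the output of the DKP25 construction and label the gadgets in the order they are introduced, writing $V_1,\dots,V_m$ for the sets of new vertices added by successive gadgets (so $R=V(L)$ is the initial rooted set). By construction, each $V_i$ is the ``new'' side of a $q$-partite $K_q^r$-like gadget, so the natural $q$-partition of $V_i$ into $V_{i,1},\dots,V_{i,q}$ makes every edge $e\in E_i$ intersect each $V_{i,j}$ in at most one vertex, giving the first bullet of~\cref{def:PartiteDegeneracy}. The crucial feature of DKP25's construction that I would single out is that every gadget is anchored to at most two previously constructed cliques (either two $q$-cliques already present in $A$, or one clique in $A$ together with one clique's-worth of root vertices from $V(L)$); therefore $\bigcup_{e\in E_i} V(e)\setminus V_i$ is contained in the union of at most two $q$-element sets, giving the required bound of $2q$ and establishing the second bullet with $d=2$.

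Next, I would verify the edge-intersecting property. Since each gadget is built so that the root side of every gadget edge either lies entirely within $V_j$ for some $j<i$ (in which case $V(L)$ is not touched at all) or sits inside a single $q$-clique of $A\cup L$ that was itself attached using one $L$-edge, every edge $e\in A$ has $e\cap V(L)$ contained in an edge of $L$. If in the base-layer attachment some gadget is anchored to an $r$-set that is \emph{not} already an edge of $L$, I would modify the construction precisely at that interface: any such anchoring set either lies in an $L$-edge by $K_q^r$-divisibility of the local structure, or can be replaced (as in DKP25) by re-routing through an additional gadget that now anchors to a genuine edge of $L$. This rerouting preserves both the layered $q$-partite structure and the bound of two cliques per gadget.

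The main obstacle is bookkeeping: ensuring simultaneously that (i) the layered order respects the partite structure within each $V_i$, (ii) every gadget is anchored to at most two cliques so that $d=2$ rather than some larger constant, and (iii) no edge straddles $V(L)$ in a way that violates the edge-intersecting requirement. The first two items are a direct audit of the DKP25 construction; the third requires care at the boundary where the absorber first meets $V(L)$, but because $L$ is $K_q^r$-divisible the local links of $L$ extend to $q$-cliques, so the anchoring interfaces can always be chosen to sit inside single $L$-edges. Once all three items are checked, the constructed $A$ is the desired $(2,q)$-rooted partite degenerate, $L$-edge-intersecting $K_q^r$-absorber for $L$.
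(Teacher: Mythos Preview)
Your outline captures the partite-degeneracy verification correctly: the boosters are rooted at a single $q$-clique and the hinges at two $q$-cliques, so the ``at most $2q$ roots'' bound for each layer goes through exactly as you describe. That part matches the paper's argument.

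The gap is in the edge-intersecting property. In the DKP25 construction, the absorber is built on top of an \emph{integral $K_q^r$-decomposition} $\Phi$ of $L$ (a signed collection of $q$-cliques in $K_{V(L)}^r$), and each booster $B_S$ is rooted at some clique $S\in\Phi^+\cup\Phi^-$. For the absorber to be $L$-edge-intersecting you need every such $S$ to satisfy $V(S)\cap V(L)\subseteq f$ for some $f\in L$; this is a condition on $\Phi$, not on the gadgets you layer afterwards. An arbitrary integral decomposition (as produced by Wilson / Graver--Jurkat) does \emph{not} have this property: a clique $S$ can meet $V(L)$ in, say, $r+1$ vertices that do not sit inside any single edge of $L$, and then every edge of $B_S$ containing those vertices violates edge-intersection. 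Your proposed fix---``by $K_q^r$-divisibility of the local structure'' or ``re-routing through an additional gadget''---does not address this: divisibility says nothing about how cliques of $\Phi$ sit inside $V(L)$, and a single re-routing gadget cannot repair a clique whose intersection with $V(L)$ is too large.

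What the paper actually does is prove a separate lemma (Theorem~\ref{thm:EdgeIntersectingIntegral}) guaranteeing the existence of an $L$-edge-intersecting integral $K_q^r$-decomposition, by an inductive ``cover-down'' argument on $|V(Q)\cap V(L)|$: one first decomposes the cone of each $(r-i)$-subset of $V(L)$ using fresh vertices outside $V(L)$, progressively pushing all clique support off of $V(L)$ except for subsets of edges of $L$. Once $\Phi$ is edge-intersecting, the rest of your audit goes through. You should replace the hand-waved ``modify at the interface'' paragraph with this construction of an edge-intersecting $\Phi$.
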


In~\cref{sec:Absorber} we utilize the absorber construction from~\cite{DKP25} to prove~\cref{thm:PartiteAbsorberExistence}, before proving~\cref{thm:NWRefinedEfficientOmniAbsorber}.

\subsection{Outline of Remainder of Paper}
In~\cref{sec:Turan}, we establish our non-uniform \Turan Theory before proving embedding results for rooted, partite degenerate hypergraphs (\cref{lem:EmbedOneCounting} and~\cref{lem:EmbedDegenPartite}). 
In~\cref{sec:Absorber}, we include the definitions and proofs related to the existence of rooted partite degenerate edge-intersecting $K_q^r$-absorbers, before deriving the existence of absorbers with many embeddings into a high minimum degree hypergraph $G$ (\cref{thm:CountingAbsorbers}).
In~\cref{Sec:Omni}, we accomplish step (2) of the proof outline by proving the Hypergraph Nash-Williams' Refined Efficient Omni-Absorber Theorem (\cref{thm:NWRefinedEfficientOmniAbsorber}).
In~\cref{Sec:boosting}, we accomplish step (3) of the proof outline by first proving the existence of a low-weight fractional decomposition of $G$ (\cref{lem:LowWeightFracDecomposition}), then deriving the Hypergraph Nash-Williams' Boosting Theorem (\cref{thm:NWRegBoost}).
Finally, in~\cref{sec:Finish}, we first accomplish step (1) of the proof outline by proving the Hypergraph High Minimum Degree Reserves Lemma (\cref{lem:highmindegreserves}). Then we derive~\cref{thm:main} by verifying that the above results work together as described in the proof outline. In~\cref{Sec:Further}, we discuss further directions of research. The appendix contains proofs implied by previous results but which we include for completeness, such as an alternate proof of the General Embedding Lemma from~\cite{EMeetsNW}.

\section{Non-Uniform Tur\'an Theory}\label{sec:Turan}
In this section, we establish the necessary non-uniform \Turan theory results for our proof of the existence of efficient omni-absorbers in our high-minimum degree hypergraph setting. First, in~\cref{Subsec:Spencer}, we generalize several classical results in hypergraph Tur\'an theory (see the survey of Keevash~\cite{K11}) to the setting of non-uniform \Turan Spaces, which we then employ in~\cref{Subsec:RootedTuran} to prove~\cref{lem:EmbedOneCounting} and~\cref{lem:EmbedDegenPartite}, our rooted partite degenerate embedding lemmas. 

\subsection{Non-Uniform Supersaturation, Blow-up, and Spencer's Lemmas}\label{Subsec:Spencer}

As described in~\cref{Subsec:NonUnifTuran}, in order to prove an embedding lemma for absorbers in our hypergraph Nash-Williams' setting, we first establish a Non-Uniform Supersaturation Lemma (\cref{lem:supersaturation}), a Non-Uniform Blowing Up Lemma (\cref{lem:BlowUp}), and a Non-Uniform Spencer's Lemma (\cref{lem:NonUnifSpencer}).

We first focus our attention on supersaturation. In~\cite{K11}, Keevash proved showed that, for an appropriately chosen $k$, an $r$-graph $G$ with edge-density above the \Turan number of $F$ contains many $k$-sets that induce a subgraph with a similarly high edge-density, each of which contains a copy of $F$. To prove our non-uniform supersaturation lemma, we use a similar tactic on each uniformity, then argue that there are many $k$-sets that induce a subgraph with high edge-density in every uniformity. To achieve this, we require a stronger bound for the number of $k$-subsets that induce a high edge-density subgraph in a given uniformity. Inspired by the footnote appearing in Keevash's proof of Lemma 2.1 in \cite{K11}, we use the following result to prove~\cref{lem:supersaturation}.

\begin{lem}\label{lem:e(G[S])large}
    For every positive integer $r$ and reals $ \alpha > \beta > 0$, there exists a positive integer $k_0$ such that if $k \geq k_0$ and $n \geq 2k^2$, then the following holds: If $G$ is a $r$-graph on $n$ vertices such that $e(G) \geq \alpha \binom{n}{r}$, then there are at most $\frac{1}{\sqrt{k}}\binom{n}{k}$ $k$-subsets $S$ of $V(G)$ such that $e(G[S]) < \beta \binom{k}{r}$. 
\end{lem}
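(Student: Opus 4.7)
The plan is to bound the number of bad $k$-subsets via a second moment argument on a uniformly random $k$-subset $S \subseteq V(G)$. Let $X := e(G[S]) = \sum_{e \in E(G)} X_e$, where $X_e = \mathbf{1}[e \subseteq S]$. Standard double counting over pairs $(e, S)$ with $e \subseteq S$ gives $\mathbb{E}[X] = e(G) \cdot \binom{k}{r}/\binom{n}{r} \ge \alpha \binom{k}{r}$, so it suffices to upper bound the lower-tail probability $\mathbb{P}\!\left[X < \beta \binom{k}{r}\right]$ by $1/\sqrt{k}$ for $k$ sufficiently large and then multiply by $\binom{n}{k}$.

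The key step is to bound $\mathrm{Var}(X) = \sum_{e, f \in E(G)} \mathrm{Cov}(X_e, X_f)$ by splitting pairs according to $j := |e \cup f| \in \{r, r+1, \ldots, 2r\}$. For $r \le j \le 2r - 1$ (when $e$ and $f$ share at least one vertex), I would bound the covariance trivially by $\mathbb{E}[X_e X_f] = \binom{n-j}{k-j}/\binom{n}{k} \le (2k/n)^j$, using the hypothesis $n \ge 2k^2$ to make each factor $(k-i)/(n-i)$ at most $2k/n$; combined with the fact that the number of ordered pairs $(e, f)$ with $|e \cup f| = j$ is at most $C_r \cdot n^j$ for a constant $C_r$ depending only on $r$, this contributes $O_r(k^{2r-1})$ to the variance. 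For $j = 2r$ (disjoint edges), the ratio
\[
\frac{\mathbb{P}[e \cup f \subseteq S]}{\mathbb{P}[e \subseteq S]\,\mathbb{P}[f \subseteq S]} \;=\; \prod_{i=0}^{r-1} \frac{(n-i)(k-r-i)}{(n-r-i)(k-i)}
\]
has each factor strictly less than one for $k < n$, so sampling without replacement yields a nonpositive covariance and this contribution can simply be discarded. Hence $\mathrm{Var}(X) \le C_r' \cdot k^{2r-1}$ for a constant $C_r'$ depending only on $r$.

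Applying Chebyshev's inequality then gives
\[
\mathbb{P}\!\left[X < \beta \binom{k}{r}\right] \le \mathbb{P}\!\left[|X - \mathbb{E}[X]| \ge (\alpha - \beta) \binom{k}{r}\right] \le \frac{\mathrm{Var}(X)}{(\alpha - \beta)^2 \binom{k}{r}^2} = O_{r, \alpha, \beta}\!\left(\frac{1}{k}\right),
\]
so for $k \ge k_0$ with $k_0$ chosen large enough in terms of $r, \alpha, \beta$, this probability is at most $1/\sqrt{k}$, which yields the desired bound after multiplying by $\binom{n}{k}$. The only delicate point is handling the disjoint-pair term: a crude bound $\binom{n-j}{k-j}/\binom{n}{k}$ on $\mathbb{E}[X_e X_f]$ alone would contribute $\Theta(k^{2r})$ and destroy the estimate, so one must either exploit the negative correlation as above or expand and cancel with the leading-order product $\mathbb{E}[X_e]\mathbb{E}[X_f]$; the factorization displayed above makes the sign transparent and avoids any messy expansion.
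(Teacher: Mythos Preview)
Your proof is correct and follows the same overall strategy as the paper's---a second-moment bound on the edge count of a random $k$-set followed by Chebyshev's inequality---but the sampling model differs. The paper samples an \emph{ordered} $k$-tuple $(v_1,\ldots,v_k)$ with each $v_i$ independently uniform in $V(G)$, and indexes indicators by $r$-subsets $T\subseteq[k]$ of \emph{positions}; disjoint position sets then give independent indicators automatically, so the variance bound reduces to counting intersecting pairs $(S,T)\in\binom{[k]}{r}^2$. They then use $n\ge 2k^2$ to show the probability of all $v_i$ being distinct is at least $1/e$, so conditioning on distinctness only costs a constant. You instead sample a uniform $k$-subset directly, index by edges of $G$, and handle the disjoint-edge contribution via the explicit product formula showing negative correlation. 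Both routes yield $\mathrm{Var}(X)=O_r(k^{2r-1})$ and hence the $O_{r,\alpha,\beta}(1/k)$ tail bound. Your treatment avoids the conditioning step (and in fact barely uses the full strength of $n\ge 2k^2$), while the paper's avoids having to check the sign of the covariance; neither requires anything beyond standard techniques.
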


\begin{remark}
We note that Lang's Inheritance Lemma \cite{lang2023tiling}, which we use a special case of later in~\cref{Sec:boosting} as Lemma~\ref{lem:LangInheritance}, provides a similar result phrased in terms of a minimum $i$th-degree condition for subgraphs containing a fixed set of `roots'. However, the statement of Lang's lemma does not technically include the case of the $0th$-degree, which is the edge-density condition (though this case is admittedly much simpler and would follow from his proof). We note that Ferber and Kwan~\cite{FB22} had earlier provided a proof of the simpler non-rooted version of the inheritance lemma (though they did not specifically state the range of $d$ their proof works for). In~\cite{K11}, Keevash mentions that the above lemma would follow by standard concentration inequalities (namely second moment method, Talagrand's inequality, or the Azuma-Hoeffding inequality all suffice) but we include a proof in the appendix for completeness.    
\end{remark}

Now we are ready to prove~\cref{lem:supersaturation}.

\begin{proof}[Proof of Lemma \ref{lem:supersaturation}]
Let $F$ be an $r$-bounded hypergraph, and let $(\alpha_1, \ldots, \alpha_r)$ be a real vector such that $(\alpha_1, \ldots, \alpha_r) \in \TS(F)$. Let $\eps > 0$ be a real number. For each $i \in [r]$, let $k_i$ be an integer such that~\cref{lem:e(G[S])large} holds for uniformity $i$, $\alpha = \alpha_i$, and $\beta = \left(\alpha_i + \frac{\eps}{2}\right)$. By the definition of \Turan space, there exists a positive integer $k_0$ such that every hypergraph $H$ on at least $k_0$ vertices with $e(H^{(i)}) \geq (\alpha_i+\frac{\eps}{2})\binom{k}{i}$ for every $i \in [r]$ contains $F$ as a subgraph. Let $k \ge \max\{k_0, k_1, \ldots, k_r, 4r^2\}$ be an integer, $\gamma := \frac{1}{2\binom{k}{v(F)}}$, and $n \ge 2k^2$. Let $G$ be a hypergraph on $n$ vertices such that $e(G^{(i)}) \geq (\alpha_i + \varepsilon)\binom{n}{i}$ for every $i \in [r]$. 

By Lemma~\ref{lem:e(G[S])large}, for each $i\in [r]$, there are at most $\frac{1}{\sqrt{k}} \binom{n}{k}$ $k$-subsets $S$ of $V(G)$ with $e(G^{(i)}[S]) < (\alpha_i + \frac{\varepsilon}{2})\binom{k}{i}$. Hence, there are at least $\left(1 - \frac{r}{\sqrt{k}} \right) \binom{n}{k}$ $k$-subsets $S$ of $V(G)$ such that $e(G^{(i)}[S]) \geq (\alpha_i + \frac{\varepsilon}{2})\binom{k}{i}$ for every $i\in [r]$. Since $k \ge 4r^2$, there are at least $\frac{1}{2} \binom{n}{k}$ such sets. Also, each such set $S$ contains $F$ as a subgraph because $k \ge k_0$. Therefore, by accounting for the number of $k$-sets that contain the same copy of $F$, we see that the number of copies of $F$ in $G$ is at least $\frac{\frac{1}{2} \binom{n}{k} }{\binom{n - v(F)}{k-v(F)}} = \frac{1}{2 \binom{k}{v(F)}} \cdot \binom{n}{v(F)} = \gamma \cdot \binom{n}{v(F)}$, as desired. 
\end{proof}

We now turn our attention to Lemma \ref{lem:BlowUp}, the Non-Uniform Blowing Up Lemma. Note that by substituting Lemma \ref{lem:supersaturation} for the Supersaturation Lemma employed by Keevash in the proof of Theorem~2.2 in~\cite{K11}, Keevash's proof directly generalizes to the non-uniform setting. We include the argument here for completeness. 

We require two classical results from \Turan Theory and Ramsey Theory in the proof of Lemma \ref{lem:BlowUp}. First, \Erdos~\cite{E64} famously calculated the \Turan density of partite hypergraphs, resulting in the following statement. 

\begin{thm}[\Erdos\cite{E64}]\label{thm:ErdosTuranDensity}
For every partite hypergraph $H$, the \Turan density of $H$ is zero.
\end{thm}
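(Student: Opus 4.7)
The plan is to reduce the claim to the complete $r$-partite case and then induct on the uniformity $r$. Let $H$ be an $r$-uniform $r$-partite hypergraph with parts of sizes at most $t$. Since $H$ embeds as a subhypergraph of the complete $r$-partite $r$-uniform hypergraph $K^{(r)}_{t,\ldots,t}$, by monotonicity of the extremal number it suffices to show $ex(n, K^{(r)}_{t,\ldots,t}) = o(n^r)$ for every fixed $t \geq 1$ and $r \geq 2$. The base case $r=2$ follows from the K\H{o}v\'ari--S\'os--Tur\'an theorem, which in fact yields the stronger bound $ex(n, K_{t,t}) = O(n^{2-1/t})$ via the standard convexity argument counting ordered pairs $(v, T)$ with $T$ a $t$-subset of $N(v)$.

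For the inductive step $r \ge 3$, I would assume the result for $(r-1)$-uniform $(r-1)$-partite hypergraphs. Let $G$ be an $r$-graph on $n$ vertices with $e(G) \ge \varepsilon \binom{n}{r}$. For each $v \in V(G)$, the link $L(v) := \{S \in \binom{V(G)\setminus\{v\}}{r-1} : S \cup \{v\} \in E(G)\}$ is an $(r-1)$-graph, and the average value of $|L(v)|$ over $v$ is $r \cdot e(G)/n = \Omega_{\varepsilon}(n^{r-1})$. By Markov's inequality, at least $\Omega_{\varepsilon}(n)$ vertices have link of edge-density at least $\varepsilon/2$. By the inductive hypothesis together with the uniform specialization of the supersaturation lemma~\cref{lem:supersaturation}, each such dense link contains $\Omega_{\varepsilon}(n^{(r-1)t})$ copies of $K^{(r-1)}_{t,\ldots,t}$. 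Double-counting pairs $(v, K)$ in which $K \subseteq L(v)$ is such a copy yields $\Omega_{\varepsilon}(n^{(r-1)t+1})$ pairs distributed over only $O(n^{(r-1)t})$ possible copies $K$; hence by pigeonhole some $K$ is contained in the links of at least $t$ distinct vertices $v_1, \ldots, v_t$, and $V(K) \cup \{v_1, \ldots, v_t\}$ spans the desired $K^{(r)}_{t,\ldots,t}$ in $G$.

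The main obstacle is the supersaturation step inside the induction: the inductive hypothesis, stated as a density statement, produces only a single copy of $K^{(r-1)}_{t,\ldots,t}$ per dense link, whereas the pigeonhole step requires a positive density of copies so that one copy can be common to $t$ vertices. This is precisely supplied by~\cref{lem:supersaturation}; since that lemma is established earlier in this section without appeal to Erd\H{o}s' theorem, no circularity arises and the induction closes.
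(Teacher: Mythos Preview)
The paper does not prove this statement; it is quoted as a classical theorem of Erd\H{o}s~\cite{E64} and used as a black box in the proof of~\cref{lem:BlowUp}. Your argument is correct and is essentially the classical one: reduce to the complete $r$-partite hypergraph, handle $r=2$ by K\H{o}v\'ari--S\'os--Tur\'an, and for the inductive step pass to links, use supersaturation at uniformity $r-1$ to get many copies of $K^{(r-1)}_{t,\ldots,t}$ in each dense link, and pigeonhole onto a common copy. Your non-circularity check is accurate: in this paper the proof of~\cref{lem:supersaturation} relies only on~\cref{lem:e(G[S])large} and the definition of the Tur\'an space, so invoking it inside your induction (with the Tur\'an point $0$ supplied by the inductive hypothesis) is legitimate. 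One minor remark: Erd\H{o}s's original argument proceeds by a direct convexity/double-counting computation that bypasses supersaturation and yields the stronger bound $ex(n,K^{(r)}_{t,\ldots,t})=O\big(n^{r-1/t^{r-1}}\big)$, but your density-level version is all that is needed here.
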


Next, Rado~\cite{R54} generalized the notorious Ramsey's Theorem to the partite hypergraph setting. In particular, for integers $q \ge r$, let $K_{q*n}^r$ denote the complete $q$-partite $r$-graph with $n$ vertices in each part (that is, the hypergraph whose vertex set consists of $q$ parts, each of size $n$, and whose edge set consists of all $r$-sets with at most one vertex in each part). For an integer $c$ and a $q$-partite $r$-graph $H$, let $R_c(H)$ denote the smallest integer $n$ such that any coloring of the edges of $K_{q * n}^r$ with $c$ colors contains a monochromatic copy of $H$, if such an integer exists. Otherwise, let $R_c(H) = \infty$. Rado proved that these $q$-partite $r$-uniform Ramsey numbers always exist. 

\begin{thm}[Rado \cite{R54}]\label{Thm:PartiteHGRamsey}
 For every integers $c$ and $q \ge r$ and every $q$-partite, $r$-graph $H$, the value $R_c(H)$ is finite. 
\end{thm}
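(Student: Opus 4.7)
The plan is to reduce to the case $H = K_{q*t}^r$ with $t := \max_i |V_i(H)|$. Since any $q$-partite $r$-graph whose parts have size at most $t$ embeds into $K_{q*t}^r$, a monochromatic copy of $K_{q*t}^r$ automatically contains a monochromatic copy of $H$, so it suffices to show $R_c(K_{q*t}^r) < \infty$ for all $c \ge 1$ and $q \ge r \ge 2$. I proceed by induction on the uniformity $r$. The base case $r = 2$ is the classical multipartite graph Ramsey theorem: for any $c, q, t \ge 1$, there exists $N$ such that every $c$-coloring of the edges of the complete $q$-partite graph $K_{N, \ldots, N}$ contains a monochromatic copy of $K_{t, \ldots, t}$. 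This follows from the finiteness of the bipartite Ramsey number $R_c(K_{t,t})$ via an iterative reduction on the number of parts, combined with a standard pigeonhole over pairwise colors.

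For the inductive step $r \ge 3$, fix a $c$-coloring $\chi$ of $E(K_{q*N}^r)$ with parts $V_1, \ldots, V_q$ and $N$ chosen sufficiently large. For each vertex $v \in V_1$, the \emph{link} $L_v$ is the $(q-1)$-partite $(r-1)$-graph on $V_2 \cup \ldots \cup V_q$ whose edges are the partite $(r-1)$-sets $S$ (satisfying $|S \cap V_j| \le 1$ for each $j \ge 2$), carrying the induced coloring $\chi_v(S) := \chi(\{v\} \cup S)$. By the inductive hypothesis, for any target size $M$, if $|V_j| \ge R_c(K_{(q-1)*M}^{r-1})$ for $j \ge 2$, then $L_v$ contains a monochromatic copy of $K_{(q-1)*M}^{r-1}$ in some color $j_v \in [c]$.

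I now iteratively select vertices $v_1, \ldots, v_k \in V_1$ while shrinking the partite substructure on $V_2 \cup \ldots \cup V_q$. Initialize $W_j^{(0)} := V_j$ for each $j \ge 2$. At step $\ell$, pick $v_\ell \in V_1 \setminus \{v_1, \ldots, v_{\ell-1}\}$ and apply the inductive hypothesis to $\chi_{v_\ell}$ restricted to $W_2^{(\ell-1)} \cup \ldots \cup W_q^{(\ell-1)}$, obtaining subsets $W_j^{(\ell)} \subseteq W_j^{(\ell-1)}$ of size $M_\ell$ and a color $j_{v_\ell}$ such that every partite $r$-set $\{v_\ell\} \cup S$ with $S$ partite in $W_2^{(\ell)} \cup \ldots \cup W_q^{(\ell)}$ has color $j_{v_\ell}$. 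This requires $M_{\ell-1} \ge R_c(K_{(q-1)*M_\ell}^{r-1})$, which is finite by induction, with terminal choice $M_k := t$ and $k := c(t-1) + 1$ total iterations; the initial value $M_0$ dictates how large $N$ must be. Pigeonhole on the $k$ colors $j_{v_1}, \ldots, j_{v_k}$ then yields at least $t$ indices $\ell$ sharing a common color $j^*$, and the corresponding $t$ vertices of $V_1$ together with the $t$ vertices in each $W_j^{(k)}$ for $j \ge 2$ induce a monochromatic copy of $K_{q*t}^r$ in color $j^*$, as desired.

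The main obstacle is the careful recursive bookkeeping needed to ensure each successive link application sees a sufficiently large $(q-1)$-partite substructure after the previous restrictions, and that the nested recursion $M_{\ell-1} \ge R_c(K_{(q-1)*M_\ell}^{r-1})$ terminates with a finite value. The resulting bound on $R_c(K_{q*t}^r)$ grows as a tower-type function of $r$, but only its finiteness is required for the application in~\cref{lem:BlowUp}, where $q$ is determined by $F$ and $t$ is a specific small constant arising from the supersaturation argument.
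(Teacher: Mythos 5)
The paper cites Rado~\cite{R54} rather than proving this statement, so there is no internal proof to compare against; evaluating your argument on its own, it has a genuine gap whenever $q > r$. Your inductive step only forces the colors of partite $r$-sets that contain the chosen vertex $v_\ell \in V_1$, but $K_{q*t}^r$ also contains $r$-sets whose part-profile misses $V_1$ entirely (precisely when $q > r$), and nothing in your construction controls their colors, so the claim that the surviving $t$ vertices of $V_1$ together with the $W_j^{(k)}$ induce a monochromatic $K_{q*t}^r$ is not justified. In fact, as literally read the statement fails for $q > r$: take $q = 3$, $r = 2$, $c = 3$, and color each edge of $K_{3*N}^2$ by the unordered pair of parts it joins; then no triangle (i.e., no $K_{3*1}^2$) is monochromatic for any $N$, so $R_3(K_3)$ would be infinite. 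The same example already contradicts the "classical multipartite graph Ramsey theorem" you invoke as your $r=2$ base case once $q > 2$.

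Your argument is, however, correct and self-contained in the diagonal case $q = r$: there every partite $r$-set meets $V_1$, the base case $q = r = 2$ is ordinary bipartite Ramsey, and the link induction with the nested recursion $M_{\ell-1} \ge R_c\bigl(K_{(r-1)*M_\ell}^{r-1}\bigr)$ followed by pigeonhole does produce a monochromatic $K_{r*t}^r$. Since the paper only applies~\cref{Thm:PartiteHGRamsey} with $q = r = v(F)$ (in the proof of~\cref{lem:BlowUp}), your proof does supply exactly what that application needs. The cleanest repair is to state and prove only the $q = r$ case; for $q > r$ the most one can conclude is a copy of $K_{q*t}^r$ that is monochromatic \emph{within each part-profile class} (a constant color on each of the $\binom{q}{r}$ profiles), which follows by iterating your diagonal argument once per profile while shrinking the parts, since profile-wise monochromaticity is preserved under taking subsets of the parts.
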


Finally, we recall the definition of the \emph{$F$-design hypergraph of $G$}, denoted ${\rm Design}(G,F)$, as the hypergraph $\mathcal{D}$ with $V(\mathcal{D}):=E(G)$ and $E(\mathcal{D}) := \{S\subseteq E(G): S \text{ is isomorphic to } F\}$. We are now ready to prove~\cref{lem:BlowUp}.

\begin{proof}[Proof of Lemma \ref{lem:BlowUp}]
    Let $F$ be an $r$-bounded hypergraph, and let $(\alpha_1, \ldots, \alpha_r)$ be a real vector such that $(\alpha_1, \ldots, \alpha_r) \in \TS(F)$. Fix a positive integer $t$, and let $t':= R_{v(F)!}(K_{v(F) * t}^{v(F)})$. Note that $t'$ is finite by~\cref{Thm:PartiteHGRamsey}. Fix $\varepsilon >0$ and let $n$ be as large as necessary throughout the proof. Let $G$ be a hypergraph on $n$ vertices such that $e(G^{(i)}) \geq (\alpha_i + \varepsilon)\binom{n}{i}$ for every $i \in [r]$. Let $H:={\rm Design}(G,F)$. By Lemma \ref{lem:supersaturation}, there exists $\gamma > 0$ such that $G$ contains at least $\gamma \cdot \binom{n}{v(F)}$ copies of $F.$ Thus, the edge-density of $H$ is positive, and so, by Theorem~\ref{thm:ErdosTuranDensity}, there exists a copy $K$ of $K_{v(F) * t'}^{v(F)}$ in $H$ with vertex partition $V(K) = V_1 \cup \ldots \cup V_{v(F)}.$ In order to employ~\cref{Thm:PartiteHGRamsey}, we color the edges of $K$ with $v(F)!$ colors according to how the vertices of a copy of $F$ intersect the parts of $K$. To that end, for every $e \in E(H)$, let $F_e$ be the copy of $F$ is $G$ corresponding to edge $e$. Let $\phi:[v(F)!] \rightarrow E(K)$ be a function such that for every $e, f \in E(K)$, we have that $\phi(e) = \phi(f)$ if and only if $V(F_e) \cap V_i = V(F_f)\cap V_i$ for every $i\in [t']$. By the choice of $t'$, there exists a monochromatic copy of $K_{v(F)*t}^{v(F)}$ in $H$. Equivalently, there exists a copy of $F(t)$ in $G$, as desired.
\end{proof}

We are now ready to prove~\cref{lem:NonUnifSpencer}. Our proof is a generalization of Spencer's original argument in~\cite{S72} using probabilistic alteration.

\begin{proof}[Proof of Lemma \ref{lem:NonUnifSpencer}]
Let $n$ be an integer, taken to be as large as necessary. Let $c:= \frac{1}{r\cdot 6^r}$. In accordance with Spencer's original argument, we will work in the complementary setting and show that if $G$ is an $r$-bounded graph on $n$ vertices such that $e(G^{(i)}) \leq \frac{c}{\binom{q}{i-1}} \binom{n}{i}$ for each $i \in [r]$, then there exists an independent set of size at least $q$ in $G$. Let $p := 2q/n$. For the purposes of alteration, let $f$ be a choice function that assigns to each nonempty subset $S\subseteq V(H)$ an element $f(S) \in S$. Let $A$ be a random subset of $V(G)$ such that each vertex is chosen independently with probability $p$. Likewise, define $A^*:= A - \bigcup _{e \in E(G[A])}f(e)$. Observe that $A^*$ is an independent set. By linearity of expectation, we have that 
\begin{align*}
    \E[~|A^*|~] &\ge \E[~|A|~] - \sum_{i \in [r]}\E[e(G^{(i)}[A])] \\
    & = pn -\sum_{i \in [r]} p^i e(G^{(i)}) \\
    & \ge pn - \sum_{i \in [r]} \frac{c \cdot p^i}{\binom{q}{i-1}}\binom{n}{i} \\
    & \ge pn - \sum_{i \in [r]} \frac{c  \cdot e^i\cdot (i-1)^{(i-1)}\cdot (pn)^i}{i^i \cdot q^{i-1}} \\
    & \ge 2q - \sum_{i \in [r]} c \cdot e^i\cdot  2^i \cdot q \\
    & \ge 2q - r \cdot c \cdot (2e)^r \cdot q \\
    &\ge q,
\end{align*}
where the last inequality holds since $c \le\frac{1}{r \cdot (2e)^r}$. Therefore, there exists a set $A$ such that $|A^*| \ge \E[~|A^*|~] \ge q$, as desired.
\end{proof}

\subsection{Rooted Non-Uniform \Turan Theory}\label{Subsec:RootedTuran}

In this section, we prove~\cref{lem:EmbedDegenPartite}. We first establish~\cref{lem:EmbedOneCounting} which proves the existence of many embeddings of one rooted partite $r$-graph $H$ in a high minimum degree host $G$. In the proof, we use the non-uniform \Turan theory we established in~\cref{Subsec:NonUnifTuran} applied to the $r$-bounded hypergraph consisting of the $i$-sets $S$ for each $i \in [r]$ where $S$ extends to an edge with every set of $r-i$ roots. (This could be construed as ``projecting'' out the roots in the compliment of $G$, using a definition of projection similar to the one in~\cite{DPII}.) 

\begin{lateproof}{lem:EmbedOneCounting}
Let $c'$ be as in~\cref{lem:NonUnifSpencer} for $r$ and let $c:=\frac{c'}{2(dr)^r}.$ Let $H'$ be the $r$-bounded hypergraph with $V(H') = V(H) \setminus R$ and $E(H') = \{e \setminus R : e \in E(H)\}$. Note that since $H$ is $R$-rooted $q$-partite, we have that $H'$ is $q$-partite. Similarly, let $G'$ be the $r$-bounded hypergraph with $V(G') := V(G)$ and, for each $i \in [r]$, 
$$E(G'^{(i)}) := \left\{e \in \binom{V(G)\setminus R}{i} : e \cup f \in E(G) \text{ for all } f \in \binom{R}{r-i}\right\}.$$

Observe that for every distinct embedding of $H'$ into $G'$, there exists a distinct embedding of $H$ rooted at $R$ into $G$. Thus, it suffices to show there exists at least $\gamma \cdot n^{|V(H) \setminus R|}$ embeddings of $H'$ into $G'$ for some $\gamma \in (0, 1]$. 

To that end, for each $i \in [r]$, let $\alpha_i := 1 - \frac{c'}{\binom{q}{i-1}}$. Recall that $(\alpha_1, \alpha_2, \ldots, \alpha_r) \in \TS(K_q^{[r]})$ by~\cref{lem:NonUnifSpencer}. Since $c' >0$, we have $(\alpha_1, \alpha_2, \ldots, \alpha_r) \in \TS(K_q^{[r]}(t))$ by~\cref{lem:BlowUp}. 

Next, let $\eps':= \frac{c}{\binom{q}{r-1}}$ and fix $i \in [r]$. We aim to show that $e(G'^{(i)}) \ge (\alpha_i + \eps')\binom{n}{i}$. To that end, let $M_i:= E(K_n^i) - E(G'^{(i)})$. Recall that $\delta(G) \ge \left(1 - \frac{c}{\binom{q}{r-1}} \right)n$, so for every choice of $r-i$ vertices in $R$, and every choice of $i-1$ in $V(G) \setminus R$, there are at most $\frac{c \cdot n}{\binom{q}{r-1}}$ edges in $M_i$ containing these $i-1$ vertices. Hence,
\begin{align}
    |M_i| \le \frac{1}{i}\binom{|R|}{r-i} \binom{n - |R|}{i - 1}{\frac{c \cdot n}{\binom{q}{r-1}}} \le \frac{1}{i}\binom{dq}{r-i} \binom{n - dq}{i - 1}{\frac{c \cdot n}{\binom{q}{r-1}}}, \label{line:M_i}
\end{align}
where the second inequality follows since $|R| \le dq$ and $n$ is large enough. Also note that by the standard binomial bounds $\frac{n^k}{k^k} \le \binom{n}{k} \le n^k$, we have
\[\eps' \binom{q}{i-1} \le \frac{c \cdot q^{i-1} \cdot (r-1)^{r-1}}{q^{r-1}} \le \frac{c \cdot r^{r-1}}{q^{r-i}} \le c \cdot (dr)^{r-1},\]
which implies
\[c \cdot (dr)^{r-1} \le 2 \cdot c \cdot (dr)^{r-1} - \eps' \binom{q}{i-1}.\]
Combined with standard binomial bounds, this implies
\begin{align}
    \frac{c \cdot \binom{dq}{r-i}}{\binom{q}{r-1}} \le \frac{c \cdot d^{r-i}\cdot q^{r-i} \cdot (r-1)^{r-1}}{q^{r-1}} \le \frac{c \cdot (dr)^{r-1}}{\binom{q}{i-1}} \le \frac{2 \cdot c \cdot (dr)^{r-1}}{\binom{q}{i-1}} - \eps'. \label{line:eps}
\end{align}
Thus, 
\begin{align*}
    e(G'^{(i)}) = e(K_n^i) - |M_i| &\ge \binom{n}{i} - \frac{1}{i} \binom{dq}{r-i} \binom{n - dq}{i - 1}{\frac{c n}{\binom{q}{r-1}}}\\
    & \ge \binom{n}{i} - \frac{n}{i}\binom{n - dq}{i - 1}\left(\frac{2 \cdot c \cdot (dr)^{r-1}}{\binom{q}{i-1}} - \eps'\right) \\
    & \ge \binom{n}{i} - \frac{n}{i}\binom{n-1}{i-1}\left(\frac{c'}{\binom{q}{i-1}} - \eps'\right)\\
    & = \binom{n}{i} - \binom{n}{i}\left(\frac{c'}{\binom{q}{i-1}} - \eps'\right) \\
    & = \left(\alpha_i + \eps' \right)\binom{n}{i},
\end{align*}
where the first line follows from~\cref{line:M_i}, the second from ~\cref{line:eps}, and the third since $n$ is large enough. 
Hence, by~\cref{lem:supersaturation}, there exists a real integer $\gamma' > 0$ such that there are at least $\gamma'  \cdot \binom{v(G')}{v(K_q^{[r]}(t))} = \gamma'  \cdot \binom{n}{q \cdot t}$ copies of $K_q^{[r]}(t)$ in $G'$. 

Note that since $H'$ is $q$-partite and $v(H') \le v(H) \le t$, each copy of $K_n^{[r]}(t)$ contains at least one embedding of $H'$. Therefore, there are at least
$$\gamma'  \cdot \binom{n}{q \cdot t} \ge \gamma' \cdot \frac{1}{(q \cdot t)^{(q\cdot t)}} \cdot n^{v(H')} = \gamma' \cdot \frac{1}{(q \cdot t)^{(q\cdot t)}} \cdot n^{|V(H) \setminus R|}$$
embeddings of $H'$ into $G'$. By taking $\gamma := \frac{\gamma'}{(q \cdot t)^{(q\cdot t)}}$, we see that there are at least $\gamma \cdot n^{|V(H) \setminus R|}$ embeddings of $H$ into $G$, as desired.
\end{lateproof}

Now we are prepared to prove~\cref{lem:EmbedDegenPartite}.

\begin{proof}
Let $c$ be as in~\cref{lem:EmbedOneCounting} for $r$ and $d$. We proceed by induction on $m$ and note that Lemma~\ref{lem:EmbedOneCounting} provides a base case. 

Let $H_1:= H - H[V_m]$. Assume there exists at least $\gamma_1 \cdot n^{|V(H_1) \setminus R|}$ embeddings of $H_1$ into $G$ for some $\gamma_1 \in (0, 1]$. Also let $R_2: = \{v \in V(H) \setminus V_m : \exists e \in E(H) \text{ with } v \in V(e) \text{ and } V(e) \cap V_m \neq \emptyset\}.$ By the $(d, q)$-rooted partite degeneracy of $H$, we know $|R_2| \le d \cdot q$. Let $H_2:= H[V_m \cup R_2]$. For a fixed embedding $\phi$ of $H_1$ into $G$, let $R_{\phi}= \{v \in V(G) : \exists u \in V(H_1)\setminus R_2 \text{ with } \phi(u) = v \}$ and let $G_{\phi}:= G - G[R_{\phi}]$. Note that $|R_{\phi}| \le |R| + \sum_{i =1}^{m-1}|V_i| \le m \cdot t$. Since $n$ is large enough, observe that
\[\delta(G_{\phi}) \ge \delta(G) - |R_{\phi}| \ge \left(1 - \frac{c}{\binom{q}{r-1}}+ \eps \right)n - m\cdot t \ge \left(1 - \frac{c}{\binom{q}{r-1}} +\frac{\eps}{2} \right)(n- m\cdot t) = \left(1 - \frac{c}{\binom{q}{r-1}}+\frac{\eps}{2} \right)v(G_{\phi}). \]
Thus, by Lemma~\ref{lem:EmbedOneCounting}, there are at least $\gamma_2 \cdot n^{|V_m\setminus R_2|}$ embeddings of $H_2$ into $G_{\phi}$ for some $\gamma_2 \in (0, 1]$. Since the concatenation of an embedding $\phi$ of $H_1$ into $G$ and an embedding of $H_2$ into $G_{\phi}$ is an embedding of $H$ into $G$, by taking $\gamma := \gamma_1 \cdot \gamma_2$, we get that there are at least $\gamma_1 \cdot n^{|V(H_1) \setminus R|} \cdot \gamma_2 \cdot n^{|V_m\setminus R_2|} = \gamma \cdot n^{|V(H) \setminus R|}$ embeddings of $H$ into $G$, as desired.
\end{proof}

\section{Counting Absorbers}\label{sec:Absorber}
In this section, we prove~\cref{thm:CountingAbsorbers}. Observe that, having already established~\cref{lem:EmbedDegenPartite}, it suffices to show the existence of rooted partite degenerate edge-intersecting $K_q^r$-absorbers (\cref{thm:PartiteAbsorberExistence}). To accomplish this, we modify the $K_q^r$-absorber construction from Delcourt, Kelly, and the second author in~\cite{DKP25}, which are built by layering \textit{boosters} and \textit{hinges} on an initial \textit{integral decomposition.} At a high level, boosters are non-trivial clique absorbers that allow us to avoid using a particular clique in a decomposition, and hinges are structures that allow us to switch between which clique to use to decompose a given edge. As mentioned in the second author's survey~\cite{P25Survey}, in the absorber construction from~\cite{DKP25}, the gadgets are layered in an edge-intersecting way, so, to ensure the final absorber construction is edge-intersecting, it suffices to prove that the initial integral decomposition satisfies a similar property. The suggested solution in~\cite{P25Survey} is to build such an integral decomposition by induction on uniformity, similar to the Cover Down Lemma from Glock, \Kuhn, Lo, and Osthus~\cite{GKLO16}. We provide a rigorous proof of this in~\cref{subsec:EdgeIntersecting}. Likewise, the layering technique lends itself well to the definition of rooted partite degenerate, and thus, to prove our final absorber is rooted partite degenerate, it suffices to use $q$-partite boosters and hinges. This property is implicit in the construction from~\cite{DKP25}, but we codify this in~\cref{subsec:Boosters&Hinges}, before proving~\cref{thm:PartiteAbsorberExistence} and~\cref{thm:CountingAbsorbers} in~\cref{subsec:EdgeIntAbsorbers}.

\subsection{Edge-Intersecting Integral Decompositions}\label{subsec:EdgeIntersecting}

We first establish the necessary definitions and theorems related to the edge-intersecting integral decomposition our absorber is built upon.

\begin{definition}[Integral $K_q^r$-valuation]
An \emph{integral $K_q^r$-valuation} $\Phi\in \mathbb{Z}^{\binom{J}{K_q^r}}$  of an $r$-graph $J$ is an assignment of integers $\Phi(Q)$ to the $K_q^r$-cliques $Q$ of $J$. For $e\in J$, we define $\partial \Phi (e):= \sum_{Q\in \binom{J}{K_q^r}: e\in Q} \Phi(Q)$.
\end{definition}

\begin{definition}[Integral Decomposition]
An \emph{integral $K_q^r$-decomposition} $\Phi$ of an $r$-graph $G$ is an integral $K_q^r$-valuation of $K_{V(G)}^r$ such that $\partial \Phi(e) = +1$ for all $e\in G$ and $\partial \Phi(e) = 0$ for all $e\in K_{V(G)}^r\setminus G$.
\end{definition}

For the purposes of induction in the proof of the existence of integral $K_q^r$-decompositions, we require the following definition. An \textit{integral hypergraph} is a hypergraph $L$ equipped with an integral valuation of its edges $\Psi\subseteq \mathbb{Z}^{e(L)}$. We let $\Psi(e)$ denote the value of edge $e$ in $L$. The notions of $K_q^r$-divisibility and integral $K_q^r$-decomposability naturally extend to integral $r$-graphs.

In 1973, Graver and Jurkat \cite{GJ73} and, independently, Wilson \cite{W73} showed the existence of integral $K_q^r$-decompositions in large enough $K_q^r$-divisible $r$-graphs. Wilson's proof also holds for integral hypergraphs, as Keevash remarks in~\cite{KeevashShortEC}.

\begin{thm}[Wilson~\cite{W73}]\label{thm:Integral}
Let $q > r\ge 1$ be integers. If $L$ is a $K_q^r$-divisible integral $r$-graph with $v(L)\ge q+r$, then there exists an integral $K_q^r$-decomposition of $L$.
\end{thm}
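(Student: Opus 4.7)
The plan is to prove Wilson's theorem by induction on the uniformity $r$, using link reductions combined with explicit integral null designs (``trades''). The base case $r=1$ is elementary: the $K_q^1$-divisibility condition reduces to $q \mid \sum_{v \in V(L)} \Psi(v)$, and when $v(L) \ge q+1$ any such integer-weighted vertex set can be written as an integer combination of characteristic vectors of $q$-subsets via a direct greedy argument.

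For the inductive step, fix $r \ge 2$, assume the theorem holds for all smaller uniformities, and let $L$ be a $K_q^r$-divisible integral $r$-graph with $v(L) \ge q + r$. First I would pick an arbitrary vertex $v \in V(L)$ and pass to its link $L_v$, the integral $(r-1)$-graph on $V(L) \setminus \{v\}$ defined by $\Psi_{L_v}(e) := \Psi_L(e \cup \{v\})$ for each $(r-1)$-set $e$. A direct check of the divisibility definitions shows that the $K_q^r$-divisibility of $L$ forces $L_v$ to be $K_{q-1}^{r-1}$-divisible, and since $v(L_v) = v(L)-1 \ge (q-1)+(r-1)$, the outer inductive hypothesis produces an integral $K_{q-1}^{r-1}$-decomposition $\Phi_v$ of $L_v$. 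Lifting this, define an integral $K_q^r$-valuation $\Phi_v^{\uparrow}$ on $K_{V(L)}^r$ by $\Phi_v^{\uparrow}(Q \cup \{v\}) := \Phi_v(Q)$ for each $(q-1)$-clique $Q \subseteq V(L) \setminus \{v\}$ and $0$ on $K_q^r$-cliques avoiding $v$; then $\partial \Phi_v^{\uparrow}$ agrees with $\Psi_L$ on every $r$-edge containing $v$.

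The hard part, and the main obstacle, is that after subtracting $\partial \Phi_v^{\uparrow}$ from $L$, the resulting integral $r$-graph $L'$ is zero on all edges containing $v$ but typically fails to be $K_q^r$-divisible when restricted to $V(L) \setminus \{v\}$. To bypass this I would run an inner induction on $v(L)$ and correct $L'$ using integer combinations of \emph{trades}, i.e.~integral $K_q^r$-valuations of $K_{V(L)}^r$ whose boundary is identically zero. The existence of a sufficiently rich family of such trades on any vertex set of size at least $q+r$ follows from a standard rank computation for the clique-to-edge incidence map, since over $\mathbb{Q}$ the space of null designs is the orthogonal complement of the divisibility linear relations, and the image is saturated in $\mathbb{Z}^{\binom{V}{r}}$ by elementary Smith normal form considerations. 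A carefully chosen $\mathbb{Z}$-combination of trades converts $L'\big|_{V(L) \setminus \{v\}}$ into a $K_q^r$-divisible integral $r$-graph, after which the inner inductive hypothesis (on a strictly smaller vertex set) gives an integral $K_q^r$-decomposition.

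Combining this decomposition with $\Phi_v^{\uparrow}$ and the applied trades yields the desired integral $K_q^r$-decomposition of $L$. The tight base case $v(L) = q + r$ of the inner induction, where vertex deletion would violate the hypothesis $v(L) \ge q+r$, is handled directly via the same incidence-map dimension count on a fixed ground set of size $q+r$, which is the technical step where one actually needs the bound $v(L) \ge q+r$ in the statement. The integrality throughout the argument is preserved because every lift, subtraction, and trade operation is defined over $\mathbb{Z}$, so the argument goes through verbatim for integral hypergraphs, which is precisely why Wilson's proof extends beyond the $0/1$-valued setting as Keevash observes.
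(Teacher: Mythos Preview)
The paper does not prove this theorem; it is stated as a classical result of Wilson~\cite{W73} (and independently Graver--Jurkat~\cite{GJ73}) and used as a black box in the proof of Proposition~\ref{Prop:IntegralCone} and Theorem~\ref{thm:EdgeIntersectingIntegral}. So there is no in-paper proof to compare your attempt against.

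As for your sketch itself, there is a concrete error in the inductive step. You assert that after subtracting $\partial\Phi_v^{\uparrow}$, the restriction $L'\big|_{V(L)\setminus\{v\}}$ ``typically fails to be $K_q^r$-divisible''. This is false. Since $\partial\Phi_v^{\uparrow}$ is an integer combination of clique indicators, $L' := L - \partial\Phi_v^{\uparrow}$ is still $K_q^r$-divisible on $V(L)$. Moreover $L'$ vanishes on every $r$-edge through $v$, so for any $S\subseteq V(L)\setminus\{v\}$ with $|S|\le r-1$ one has $\deg_{L'|_{V(L)\setminus\{v\}}}(S) = \deg_{L'}(S)$, which is divisible by $\binom{q-|S|}{r-|S|}$. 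Hence the restriction \emph{is} $K_q^r$-divisible, and the trade correction you invoke (with its appeal to Smith normal form) is unnecessary at this point.

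This actually makes your double induction cleaner than you present it: the inner induction on $v(L)$ runs without obstruction down to $v(L)=q+r$. But then the entire content of the theorem has been pushed into that terminal base case, namely showing that on a ground set of size exactly $q+r$ every $K_q^r$-divisible integral $r$-graph lies in the $\mathbb{Z}$-image of the clique boundary map. Your sketch dismisses this as a ``dimension count'', but over $\mathbb{Z}$ a rank computation alone is insufficient; one must also verify that the cokernel has no torsion beyond what the divisibility conditions already encode, and this is precisely the substantive part of Wilson's argument that you have not supplied.
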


As discussed above, we require our integral decomposition to satisfy an edge-intersecting property:

\begin{definition}[Edge-Intersecting Integral $K_q^r$-Valuation]
Let $L\subseteq K_n^r$ (not necessarily spanning). An integral $K_q^r$-valuation $\Phi$ of $K_n^r$ is \emph{$L$-edge-intersecting} if for every clique $Q\in \Phi$, there exists $f\in L$ such that $V(Q)\cap V(L) \subseteq f$.
\end{definition}

We also require the following definitions. For an integral $r$-graph $L$ and $S\subseteq V(L)$, the \emph{cone of $S$} is the set $\{e\in L: S\subseteq e\}$ and \emph{the link hypergraph of $S$} is $L(S) := \{ e\setminus S: e \text{ is in the cone of } S \}$. 

To prove the existence of edge-intersecting $K_q^r$-absorbers, we first prove the following generalization of Theorem~\ref{thm:Integral}.

\begin{thm}\label{thm:EdgeIntersectingIntegral}
Let $q > r\ge 1$ be integers. If $L$ is a $K_q^r$-divisible integral hypergraph with integral valuation $\Psi \in \mathbb{Z}^{e(L)}$, then there exists an $L$-edge-intersecting integral $K_q^r$-decomposition $\Phi$ of $L$ inside $K_{v(L)+q+r}^r$.
\end{thm}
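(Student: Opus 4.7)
The plan is to prove this by induction on the uniformity $r$. For the base case $r=1$, one fixes a subset $W_0\subseteq W$ of size $q-1$ and uses the $q$-clique $\{v\}\cup W_0$ with weight $\Psi(v)$ for each $v\in V(L)$ to realize the target weight on $V(L)$, then cancels the resulting accumulated weight $T:=\sum_v \Psi(v)$ on each vertex of $W_0$ using $q$-cliques supported entirely in $W$ (of which there are $q+1$ since $|W|=q+1$). The resulting $(q+1)$-variable linear system admits an integer solution precisely because the $K_q^1$-divisibility hypothesis forces $q \mid T$.

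For the inductive step, fix a set $W$ of $q+r$ extra vertices disjoint from $V(L)$ and process the vertices of $V(L)$ one at a time, maintaining an integral $r$-graph residual $R_i$ with $R_0 = L$. A crucial invariant is that $R_i$ remains $K_q^r$-divisible throughout, since $L$ is $K_q^r$-divisible and the boundary of any integral $K_q^r$-valuation is itself $K_q^r$-divisible (each individual $K_q^r$-clique being trivially so as an $r$-graph). At step $i$ processing vertex $v$, the link $R_{i-1}(v)$ is a $K_{q-1}^{r-1}$-divisible integral $(r-1)$-graph on at most $v(L)-1$ vertices, and the inductive hypothesis applied inside $V(L(v))\cup W'$ for a suitable $W'\subseteq W$ of size $q+r-2$ (which fits since $|W|=q+r$) yields an $L(v)$-edge-intersecting integral $K_{q-1}^{r-1}$-decomposition of the link. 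One lifts each $(q-1)$-clique $Q'$ to the $K_q^r$-clique $\{v\}\cup Q'$ with the same weight, which exactly covers the residual weight at $v$; the lifted clique is $L$-edge-intersecting because $V(\{v\}\cup Q')\cap V(L)\subseteq\{v\}\cup f'$ for some $f'\in L(v)$, so $\{v\}\cup f'\in L$. After all of $V(L)$ is processed (with the bookkeeping discussed below handling side effects), the residual is $K_q^r$-divisible and supported entirely in $W$; a single application of Wilson's theorem (\cref{thm:Integral}) inside $K_W^r$, valid since $|W|=q+r$, then produces an integral $K_q^r$-decomposition of the residual using cliques in $W$ alone, which are trivially $L$-edge-intersecting.

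The main obstacle will be controlling the side-effect $r$-edges: each lifted $(q-1)$-clique $Q'$ contributes weight not only to the covered link edge at $v$ but also to every $r$-subset of $Q'$ not containing $v$, and these side-effect $r$-edges may land on $r$-subsets of $V(L)$ or on cross-edges between $V(L)$ and $W$. Such stray weights would both threaten the edge-intersecting property at later steps (since subsequent residual links would then contain phantom edges not present in the original $L$) and threaten the invariant that the final residual is supported in $W$. The resolution is to exploit the $L(v)$-edge-intersecting condition from the inductive application, which forces each $Q'$ to intersect $V(L)$ in a subset of size at most $r-1$ (lying inside a single edge of $L$ through $v$); in particular no side-effect $r$-edge is entirely inside $V(L)$, and the cross-edge side effects that do arise can be canceled using auxiliary correction cliques of the form $U\cup S$ for $U$ a proper subset of an edge of $L$ and $S\subseteq W$, which are themselves $L$-edge-intersecting by construction. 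The $q+r$ extra vertices, rather than the strict minimum needed simply to extend one edge to a $K_q^r$-clique, provide exactly the routing slack required to funnel side effects into $W$ and to carry out the recursive corrections, after which the final residual becomes a $K_q^r$-divisible integral $r$-graph on $W$ to which Wilson's theorem applies.
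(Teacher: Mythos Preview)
Your approach differs substantially from the paper's. You induct on the uniformity $r$ and process vertices of $V(L)$ one at a time, lifting link decompositions. The paper does neither: it works directly in uniformity $r$ and iterates on $i\in[r]_0$, maintaining a valuation $\Phi_i$ with $\partial\Phi_i=\Psi$ on $L$ and $\partial\Phi_i(e)=0$ for every $e\notin L$ with $|e\cap V(L)|\ge r-i$. The base $\Phi_0$ extends each edge of $L$ to a clique via a fixed $(q-r)$-subset of the extra vertices, and the passage from $\Phi_{i-1}$ to $\Phi_i$ applies Wilson's theorem to the link of each $(r-i)$-set $S\subseteq e$ for some $e\in L$ (this is Proposition~\ref{Prop:IntegralCone}), pushing the residual onto edges with strictly smaller intersection with $V(L)$. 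Every clique used meets $V(L)$ inside a single edge of $L$, so the whole construction is $L$-edge-intersecting by design.

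Your plan has a genuine gap at the ``auxiliary correction cliques'' step. You correctly observe that lifted cliques meet $V(L)$ in at most $r$ vertices contained in one edge of $L$, so no side-effect $r$-edge lies entirely in $V(L)$. But you do not explain \emph{how} a cross edge $e$ with $|e\cap V(L)|=j<r$ is cancelled by cliques $U\cup S$ without spawning further side effects at the same level. Doing this correctly requires, for each such $j$-set $U$, an integral valuation on cliques through $U$ (and otherwise in $W$) whose boundary kills the residual at the cone of $U$; that is exactly Proposition~\ref{Prop:IntegralCone}, and processing these $U$'s in decreasing order of $j$ is exactly the paper's iteration. So your correction step is not a cleanup but the heart of the argument, and writing it out reproduces the paper's proof, with your induction on $r$ now redundant. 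There is also an unaddressed bookkeeping issue: after processing $v_1$, side-effect edges through a later $v_i$ may involve vertices of $W'$, so $R_{i-1}(v_i)$ is supported on $V(L(v_i))\cup W'$ rather than $V(L(v_i))$ alone; your recursive call then needs $q+r-2$ fresh extra vertices disjoint from this support, but you only have $|W|=q+r$ in total.
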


The proof of Theorem~\ref{thm:EdgeIntersectingIntegral} proceeds by constructing an edge-intersecting integral decomposition $\Phi$ iteratively where the current remainder ($R:= L-\Phi$) has progressively smaller intersection with $V(L)$; in particular for each $i\in [r]_0$, we apply the Integral Decomposition Theorem (Theorem~\ref{thm:Integral}) to every set $S\subseteq \binom{V(L)}{r-i}$ with $R(S)\ne \emptyset$ to decompose the edges of $R$ containing $S$. The new cliques will be edge-intersecting since all such sets $S$ will be contained in some edge of $L$.

First we require the following proposition. 

\begin{proposition}\label{Prop:IntegralCone}
Let $\Psi\subseteq \mathbb{Z}^{\binom{[n]}{r}}$ be a $K_q^r$-divisible integral $r$-graph where $n\ge q+r$. If $S\subseteq [n]$ with $|S|\le r$, then there exists an integral $K_q^r$-valuation $\Phi$ of $K_n^r$ such that $\partial \Phi(e) = \Psi(e)$ for all $e\in \binom{[n]}{r}$ with $S\subseteq e$. 
\end{proposition}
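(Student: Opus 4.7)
The plan is to reduce to Wilson's Integral Decomposition Theorem (\cref{thm:Integral}) via a link/cone construction at $S$, after separately handling the degenerate case $|S| = r$. If $|S| = r$, the only $r$-edge containing $S$ is $S$ itself, so since $n \ge q + r > q$ I can pick any $q$-subset $Q \subseteq [n]$ with $Q \supseteq S$, assign $\Phi$ the value $\Psi(S)$ on this single $K_q^r$-clique and $0$ to all other $q$-subsets of $[n]$. Then $\partial \Phi(S) = \Psi(S)$, which is all that is required.

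For $|S| < r$, the idea is to project onto the cone of $S$. Define an integral $(r-|S|)$-graph $\Psi'$ on $[n]\setminus S$ by $\Psi'(T) := \Psi(T \cup S)$ for each $T \in \binom{[n]\setminus S}{r-|S|}$, produce an integral $K_{q-|S|}^{r-|S|}$-decomposition $\Phi'$ of $\Psi'$ by Wilson's theorem, and then lift. The main obstacle — essentially the only content of the argument — is verifying that $\Psi'$ is $K_{q-|S|}^{r-|S|}$-divisible, and that the parameter match between the ambient divisibility of $\Psi$ and that required by $\Psi'$ is exact. For any $T' \subseteq [n]\setminus S$ with $|T'| = j$ and $0 \le j \le (r-|S|) - 1$, one has
\[
\sum_{T \supseteq T'} \Psi'(T) \;=\; \sum_{e \supseteq S\cup T'} \Psi(e),
\]
and since $|S \cup T'| = |S| + j \le r - 1$, the $K_q^r$-divisibility of $\Psi$ applied to $S \cup T'$ guarantees this sum is divisible by $\binom{q - (|S|+j)}{r - (|S|+j)} = \binom{(q-|S|) - j}{(r-|S|) - j}$, exactly the divisor needed. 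Also $v(\Psi') = n - |S| \ge q + r - |S| \ge (q-|S|) + (r-|S|)$, so Wilson's theorem indeed yields the desired $\Phi'$ on the $(q-|S|)$-subsets of $[n]\setminus S$.

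To conclude, lift $\Phi'$ to an integral $K_q^r$-valuation $\Phi$ of $K_n^r$ by setting $\Phi(A \cup S) := \Phi'(A)$ for every $(q-|S|)$-subset $A \subseteq [n]\setminus S$, and $\Phi := 0$ on every $q$-subset of $[n]$ not of this form. For an edge $e = S \cup T$ with $T \in \binom{[n]\setminus S}{r-|S|}$, the only nonzero cliques contributing to $\partial \Phi(e)$ are those of the form $A \cup S$ with $A \supseteq T$, and so
\[
\partial \Phi(e) \;=\; \sum_{A \supseteq T}\Phi'(A) \;=\; \partial \Phi'(T) \;=\; \Psi'(T) \;=\; \Psi(e),
\]
as required. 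Nothing is claimed for edges of $\binom{[n]}{r}$ not containing $S$, which is consistent with the statement of the proposition.
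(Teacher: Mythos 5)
Your proof is correct and takes essentially the same approach as the paper: both pass to the link hypergraph of $S$ (your $\Psi'$ is exactly the paper's $\Psi(S)$), apply Wilson's Integral Decomposition Theorem to obtain a $K_{q-|S|}^{r-|S|}$-decomposition, and lift it back by adjoining $S$. You additionally spell out the divisibility transfer from $\Psi$ to $\Psi'$ and separately handle the degenerate case $|S|=r$, both of which the paper leaves implicit, but the underlying argument is the same.
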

\begin{proof}
Let $i:=|S|$. Since $\Psi$ is $K_q^r$-divisible, we find that $\Psi(S)$ is $K_{q-i}^{r-i}$-divisible. By Theorem~\ref{thm:Integral} as $n-i\ge (q-i)+(r-i)$, there exists a $K_{q-i}^{r-i}$-decomposition $\Phi'$ of $\Psi(S)$. For each $P\in \binom{[n]\setminus S}{q-i}$, we let $\Phi(P\cup S) := \Phi'(P)$. Then $\Phi$ is as desired.
\end{proof}

We are now prepared to prove Theorem~\ref{thm:EdgeIntersectingIntegral}.

\begin{proof}[Proof of Theorem~\ref{thm:EdgeIntersectingIntegral}]
We claim that for each $i\in [r]_0$, there exists 
an $L$-edge-intersecting integral $K_q^r$-valuation $\Phi_i$ of $G:=K_{v(L)+q+r}^r$ such that $\partial \Phi(e) =\Psi(e)$ for all $e\in L$ and $\partial\Phi(e) = 0$ for all $e \in G\setminus L$ with $|V(e)\cap V(L)|\ge r-i$. Note then that the theorem follows with $\Phi_r$ as desired.

We proceed by induction on $i$. Let $X := V(G)\setminus V(L)$. First suppose $i=0$. Let $X'\subseteq X$ with $|X'|=q-r$. Let $\Phi_0$ be the integral valuation such that $\Phi_0(V(e)\cup X')=\Psi(e)$ for every $e \in L$ and $0$ otherwise. Then $\Phi_0$ is as claimed.

So we assume $i> 1$. Let $\mc{S} := \bigcup_{e\in L} \binom{V(e)}{r-i}$. By Proposition~\ref{Prop:IntegralCone} as $|X|\ge q+r$, we have that for each $S\in \mc{S}$ there exists an integral $K_q^r$-valuation $\Phi_S$ of the cone of $S\cup X$ in $G$ such that $\partial \Phi_S(f) = -\Phi_{i-1}(f)$ for all $f\in \binom{V(e)\cup X}{r}$ with $S\subseteq f$. Then $\Phi_i:= \Phi_{i-1} + \sum_{S\in \mc{S}} \Phi_S$ is as claimed.
\end{proof}

\subsection{$q$-partite Boosters and Hinges}\label{subsec:Boosters&Hinges}

Here we show the existence of the partite version of the objects used to build a $K_q^r$-absorber. For a $K_q^r$-decomposition $\mc{Q}$ of an $r$-graph $G$ and edge $e\in G$, we use $\mc{Q}[e]$ to denote the (unique) $K_q^r$-clique in $\mc{Q}$ containing $e$.

\begin{definition}[Booster]\label{def:booster}
A \emph{$K_q^r$-booster} for a $K_q^r$-clique $S$ is an $r$-graph $B$ such that $B$ is edge-disjoint from $S$, $B$ has a $K_q^r$-decomposition $\off (B)$, and $B\cup S$ has a $K_q^r$-decomposition $\on (B)$ such that $S\not\in \on(B)$. 
\end{definition}

\begin{definition}[Orthogonal Booster]
A $K_q^r$-booster $B$ for $S$ is \emph{orthogonal} if for all distinct $e, f\in S$, we have $\on(B)[e]\ne \on(B)[f]$.
\end{definition}

Since we require our absorber to be partite, the following is a simple observation that we will refer to in subsequent proofs.

\begin{remark}\label{rem:partite}
    If $H_1$ and $H_2$ are $q$-partite $r$-graphs such that $H_1[V(H_1)\cap V(H_2)] = H_2[V(H_1)\cap V(H_2)]$, then $H_1 \cup H_2$ is also $q$-partite.
\end{remark}

Next, we show the existence of partite boosters and partite orthogonal boosters. 
Recall that $K_{q*n}^r$ denotes the complete $q$-partite $r$-graph with $n$ vertices in each part. The following is the partite analog of Lemma~2.3 from~\cite{DKP25}.

\begin{lem}\label{lem:Booster}
Let $q > r\ge 1$ be integers. If $S$ is a $K^r_q$-clique, then there exists a $K_q^r$-booster $B$ for $S$ such that $B\cup S$ is $q$-partite.
\end{lem}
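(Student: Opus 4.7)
The plan is to apply \cref{thm:EdgeIntersectingIntegral} to the integral hypergraph $\mathbf{1}_S$ (with value $+1$ on every edge of $S$), which is $K_q^r$-divisible, and then convert the resulting integral valuation into an actual booster via a partite blow-up. \Cref{thm:EdgeIntersectingIntegral} yields an $S$-edge-intersecting integral $K_q^r$-valuation $\Phi$ of $K_{2q+r}^r$ with $\partial \Phi = \mathbf{1}_S$; the edge-intersecting condition forces $|V(Q)\cap V(S)|\le r$ for every $Q$ with $\Phi(Q)\ne 0$, so in particular $Q\ne S$ for all such $Q$.

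Next, I would blow up $\Phi$ on fresh partite vertices. For each clique $Q$ in the support of $\Phi$, I would introduce $|\Phi(Q)|$ disjoint copies $Q^{(1)},\ldots,Q^{(|\Phi(Q)|)}$, each preserving $V(Q)\cap V(S)$ and replacing the $q - |V(Q)\cap V(S)|$ outer vertices by fresh vertices that are pairwise disjoint across all copies and all cliques. For each such copy, I would choose an arbitrary bijection from its new outer vertices to the parts $\{P_i : v_i\in V(S)\setminus V(Q)\}$ and assign each fresh outer vertex to its corresponding part (with $v_i\in P_i$). Since each fresh outer vertex is used in a unique copy, the resulting partition of the full vertex set is well-defined and $q$-partite. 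Letting $\on(B)$ be the collection of copies arising from positive-sign cliques, $\off(B)$ the collection from negative-sign cliques, and $B$ the set of non-$S$ edges covered, the $q$-partiteness of $B\cup S$ is then automatic.

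The main obstacle is ensuring that $\on(B)$ and $\off(B)$ are honest (multiplicity-one) $K_q^r$-decompositions rather than multi-decompositions. For edges containing at least one outer vertex this is automatic, since any such edge appears in exactly one clique-copy by the freshness of outer vertices. For edges $e\in S$, however, $S$-edge-intersecting forces every $Q\in\mathrm{supp}(\Phi)$ containing $e$ to satisfy $V(Q)\cap V(S) = V(e)$, and their $\Phi$-values merely sum to $+1$; so $e$ may appear with total multiplicity greater than $1$ in the naive blow-up. To remedy this, I would first preprocess $\Phi$ by locally redistributing its restriction to each family $\mathcal{Q}_e := \{Q : V(e)\subseteq V(Q)\}$ onto a single clique with value $+1$, then absorb the resulting change on non-$S$ edges via a further application of \cref{thm:EdgeIntersectingIntegral} confined to cliques with $|V(Q)\cap V(S)|<r$ (so that the compensating valuation remains $S$-edge-intersecting and affects no $S$-edge). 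This preprocessing, which exploits \cref{Prop:IntegralCone} to redirect the surplus through slack cliques, is the technical heart of the argument; once it is carried out, the blow-up outputs the desired partite booster.
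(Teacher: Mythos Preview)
Your proposal has a fundamental gap: blowing up an integral $K_q^r$-decomposition of $\mathbf{1}_S$ on fresh outer vertices does \emph{not} produce a booster. After the blow-up, every non-$S$ edge contains at least one fresh vertex and therefore lies in exactly one clique-copy. Consequently each such edge is covered by the positive collection or the negative collection, but never by both. If you set $B$ to be the union of all non-$S$ edges, then the edges coming from positive copies are not covered by your $\off(B)$ at all, so $\off(B)$ is not a $K_q^r$-decomposition of $B$. Symmetrically, the edges coming from negative copies are not covered by $\on(B)$. An integral decomposition only witnesses the \emph{signed} identity $(\text{positive cliques})-(\text{negative cliques})=\mathbf{1}_S$; it does not give two honest decompositions of a common edge set differing by $S$, which is exactly what a booster requires.

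This is not a technicality that further preprocessing can repair: the difficulty is that the positive and negative parts of an integral decomposition live on essentially disjoint edge sets once you separate the copies, whereas a booster needs $\off(B)$ and $\on(B)$ to cover the \emph{same} set $B$ (up to the edges of $S$). Converting integral decompositions into genuine decompositions is precisely what boosters (and hinges) are for in this paper, so attempting to build boosters from integral decompositions is circular in spirit. The paper instead constructs a booster directly and algebraically: it takes a prime $n$ with $2q-r<n<2(2q-r)$, realises $S$ inside $K_{q*n}^r$, and uses a $(q-r)\times q$ Cauchy matrix over $\mathbb{F}_n$ to produce a family of $K_q^r$-decompositions of $K_{q*n}^r$ that partition all its $q$-cliques. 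Any two such decompositions are clique-disjoint, so picking the one containing $S$ and any other immediately yields $\off(B)$ and $\on(B)$ with $B=K_{q*n}^r\setminus S$, and $q$-partiteness is automatic from the host.
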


\begin{proof}
Let $n$ be a prime such that $2q-r < n < 2(2q-r)$ (such exists by Bertrand's postulate, first proved in 1852 by Chebyshev~\cite{C52}). We may assume $S$ is a copy of $K_q^r$ in $K_{q*n}^r$. We prove that $K_{q*n}^r\setminus S$ is a $q$-partite $K_q^r$-booster for $S$. Since $K_{q*n}^r$ is $q$-partite by construction, it suffices to show that $K_{q*n}^r$ admits two $K_q^r$-decompositions $\mc{Q}_1,\mc{Q}_2$ that do not share a clique; then, if $S\in \mc{Q}_1$, we have that $\mc{Q}_1\setminus S$ and $\mc{Q}_2$ are as desired. 

Let $M$ be a $(q-r)\times q$ Cauchy matrix over $\mathbb{F}_n$ (specifically, we can take $x_i := i$ for $i\in [q-r]$, $y_j := q-r+j$ for $j\in [q]$ and $M_{ij} := 1/(x_i-y_j)$ for $i\in [q-r], j\in [q]$). By construction, every submatrix of a Cauchy matrix is also a Cauchy matrix; a result of Cauchy~\cite{C1841} implies that every square Cauchy matrix is invertible. A vector $v\in \mathbb{F}_n^q$ naturally corresponds to a copy of $K_q^r$ in $K_{q*n}^r$. For $a \in \mathbb{F}_n^{q-r}$, the set of solutions $v\in \mathbb{F}_n^q$ of $Mv=a$ naturally corresponds to a $K_q^r$-decomposition $\mc{Q}_a$ of $K_{q*n}^r$ (since every $r$-set will be in exactly one such $K_q^r$, or equivalently exactly one such solution vector $v$, as every $(q-r)\times (q-r)$ submatrix of $M$ is invertible). Indeed, the family of decompositions $(\mc{Q}_a: a \in \mathbb{F}_n^{q-r})$ partition the $K_q^r$'s in $K_{q*n}^r$ into $K_q^r$-decompositions of $K_{q*n}^r$, since for every vector $v$, there is exactly one $a$ such that $Mv=a$. Since there are at least two choices $a_1,a_2$ of $a$ as $n > 1$, there exist two decompositions $\mc{Q}_{a_1}, \mc{Q}_{a_2}$ of $K_{q*n}^r$ that do not share a clique.
\end{proof}

The following is the partite analog of Lemma~2.4 in~\cite{DKP25}.

\begin{lem}\label{lem:OrthBooster}
Let $q > r\ge 1$ be integers. If $S$ is a $K^r_q$-clique, then there exists an orthogonal $K_q^r$-booster $B$ for $S$ such that $B\cup S$ is $q$-partite.   
\end{lem}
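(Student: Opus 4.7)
My plan is to strengthen the Cauchy-matrix argument from the proof of~\cref{lem:Booster} and show that, with $n$ chosen sufficiently large and the parameter $a$ chosen to avoid a small bad set, the construction there already yields an orthogonal booster (so no additional gadget is needed).

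To set up, I will recall the framework of~\cref{lem:Booster}. For a prime $n$ and the $(q-r)\times q$ Cauchy matrix $M$ over $\mathbb{F}_n$ used in that proof, each $K_q^r$-clique in $K_{q*n}^r$ corresponds to a vector $v \in \mathbb{F}_n^q$ (one vertex per part), and for every $a \in \mathbb{F}_n^{q-r}$ the fiber $\mathcal{Q}_a := \{v : Mv = a\}$ is a $K_q^r$-decomposition. Letting $v_S \in \mathbb{F}_n^q$ be the vector for the given clique $S$ and $a_S := Mv_S$, the choice $B := K_{q*n}^r \setminus S$, $\off(B) := \mathcal{Q}_{a_S}\setminus\{S\}$, $\on(B) := \mathcal{Q}_a$ gives a $q$-partite booster for any $a \neq a_S$. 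It then remains to show that $a$ can be chosen so that $\mathcal{Q}_a$ is orthogonal with respect to $S$.

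Next I will identify and bound the bad choices of $a$. For an edge $e \in S$ with corresponding $r$-subset of parts $I(e) \subseteq [q]$, the clique $\mathcal{Q}_a[e]$ corresponds to the unique vector $v$ satisfying $v|_{I(e)} = v_S|_{I(e)}$ and $Mv = a$, uniqueness following because the $(q-r)\times(q-r)$ submatrix of $M$ on columns $[q]\setminus I(e)$ is Cauchy and hence invertible. For distinct $e, f \in S$, setting $J := I(e)\cup I(f)$ (so $|J|\geq r+1$) and $K := [q]\setminus J$ (so $|K|\leq q-r-1$), substituting $v = v_S + w$ with $\mathrm{supp}(w)\subseteq K$ shows that $\mathcal{Q}_a[e] = \mathcal{Q}_a[f]$ if and only if $a - a_S \in \mathrm{Im}(M_K)$, where $M_K$ is the $(q-r)\times|K|$ submatrix of $M$ on columns $K$. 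Since every square submatrix of a Cauchy matrix is invertible, $M_K$ has full column rank $|K|$, so $\mathrm{Im}(M_K)$ is a proper subspace of $\mathbb{F}_n^{q-r}$ of size at most $n^{q-r-1}$.

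The finish is a union bound: the set of $a$ that are either equal to $a_S$ or yield a collision $\mathcal{Q}_a[e] = \mathcal{Q}_a[f]$ for some distinct $e, f \in S$ has size at most $1 + \binom{\binom{q}{r}}{2}\cdot n^{q-r-1}$, which is strictly less than $n^{q-r}$ once $n$ exceeds a constant depending only on $q$ and $r$. Iterating Bertrand's postulate to obtain a prime $n$ past this threshold and choosing any good $a$ produces the desired orthogonal $q$-partite booster, since $B\cup S = K_{q*n}^r$ is $q$-partite by construction. I expect the only real obstacle to be the bookkeeping around the image of the rectangular Cauchy submatrix $M_K$; everything else is a direct strengthening of~\cref{lem:Booster}, and no new structural ideas are required.
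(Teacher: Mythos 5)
Your proof is correct, and it takes a genuinely different route from the paper's. The paper proves~\cref{lem:OrthBooster} by an extremal/iterative argument: starting from any $q$-partite booster (from~\cref{lem:Booster}), take one that maximizes the number of cliques of $\on(B)$ intersecting $S$; if it is not yet orthogonal, locate a clique $Q \in \on(B)$ containing two edges of $S$, graft on a fresh booster $B^*$ for $Q$ positioned so that $V(Q)\cap V(S)$ is not contained in any clique of $\on(B^*)$, and derive a contradiction to maximality. That argument treats~\cref{lem:Booster} as a black box and is indifferent to how the booster was built. Your approach instead reopens the Cauchy-matrix construction and shows it already produces an orthogonal booster for a good choice of fiber $a$: the key observation is that $\mathcal{Q}_a[e]=\mathcal{Q}_a[f]$ exactly when $a-a_S$ lies in $\mathrm{Im}(M_K)$ for $K=[q]\setminus(I(e)\cup I(f))$, and since $|K|\le q-r-1$ and rectangular Cauchy submatrices have full column rank, this is a proper affine subspace of size at most $n^{q-r-1}$. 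Union-bounding over the $\binom{\binom{q}{r}}{2}$ pairs and adding the single bad point $a_S$ gives fewer than $n^{q-r}$ bad values once $n > \binom{\binom{q}{r}}{2}+1$ (and $n > 2q-r$ so the Cauchy matrix is defined), so a good $a$ exists for any sufficiently large prime $n$. One detail worth making explicit if you write this up: the paper's proof of~\cref{lem:Booster} confines $n$ to the Bertrand window $(2q-r,2(2q-r))$, but only the lower bound is structural; you are free to discard the upper bound and take $n$ as large a prime as you need, which is exactly what you do. Your route is shorter and avoids the gluing step, at the cost of being tied to the specific algebraic construction; the paper's route is longer but is a generic "bootstrapping to orthogonality" that would survive replacing the Cauchy construction by any other source of partite boosters.
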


\begin{proof}
Let $B$ be a $q$-partite $K_q^r$-booster for $S$ such that $i$, the number of cliques of $\on(B)$ that contain at least one edge of $S$, is maximized. Note such exists by Lemma~\ref{lem:Booster}; furthermore we have that $i\ge 2$ by the definition of a booster. If $i=\binom{q}{r}$, then $B$ is orthogonal as desired. 

So we assume $i < \binom{q}{r}$. Hence there exists a clique $Q\in \on(B)$ such that $Q$ contains at least two edges of $S$. Let $j:= |V(Q)\cap V(S)|$. Note $j\ge r+1$ since $Q$ contains at least two edges of $S$. Since $Q\ne S$, it follows that $j < q$. Let $B^*$ be a $q$-partite $K_q^r$-booster for $Q$ as given by Lemma~\ref{lem:Booster}.  By relabelling vertices, we assume without loss of generality that $V(B^*) \cap V(B) = V(Q)$ and in addition that $V(Q) \cap V(S)$ is not contained in a clique of $\on(B^*)$, as we now proceed to show is possible.

Since $\on(B^*)$ has at least two cliques that contain at least one edge of $Q$, there exists some subset $T_0\subseteq V(Q)$ with $|T_0|=r+1$ such that $T_0$ is not contained in a clique of $\on(B^*)$. Let $T\subseteq V(Q)$ with $T_0\subseteq T$ and $|T|=j$. Note then that $T$ is not contained in a clique of $\on(B^*)$ and hence letting $V(Q)\cap V(S)=T$ is as desired.

Let $B'=B\cup B^*$. Let $\on(B') := (\on(B)\setminus \{Q\})\cup\on(B^*)$ and $\off(B'):= \off(B)\cup \off(B^*)$. Note that $\on(B')$ is a $K_q^r$-decomposition of $B'\cup S$ and $\off(B')$ is a $K_q^r$-decomposition of $B'$. Also, by Remark~\ref{rem:partite}, $B'$ is $q$-partite. Moreover, the number of cliques of $\on(B')$ intersecting $S$ is at least $i+1$, and hence $B'$ contradicts the maximality of $B$.
\end{proof}

We also recall the following object from~\cite{DKP25}.

\begin{definition}[Hinge]
Let $S$ and $S'$ be two $K_q^r$-cliques such that $S\cap S'=\{e\}$ for some $r$-edge $e$. A \emph{$K_q^r$-hinge} for $S$ and $S'$ is an $r$-graph $H$ such that $H$ is edge-disjoint from $S\cup S'$, $H\cup (S\setminus e)$ has a $K_q^r$-decomposition $\hleft(H)$ (the ``left" one), and $H\cup (S'\setminus e)$ has a $K_q^r$-decomposition $\hright(H)$ (the ``right" one). 

Furthermore, we say $H$ is \emph{independent} if $V(S)\cup V(S')$ is independent in $H$.
\end{definition}

We note that if $B$ is an orthogonal, $q$-partite $K_q^r$-booster for $S$ and $e\in S$ and we let $S'=\on(B)[e]$, then $H:=B\setminus (S'\setminus e)$ is a $K_q^r$-hinge for $S$ and $S'$ where $\hleft(H):= \on(B)\setminus \{S'\}$, $\hright(H):= \off(B)$ and $S \cup S' \cup H$ is $q$-partite. 

The following is the partite analog of Lemma~2.5 in~\cite{DKP25}.

\begin{lem}\label{lem:IndHinge}
Let $q > r\ge 1$ be integers. If $S_1$ and $S_2$ are $K^r_q$-cliques such that $S_1 \cap S_2 = \{e\}$ for some $r$-edge $e$, then there exists an independent $K_q^r$-hinge $H$ for $S_1$ and $S_2$ such that $S_1\cup S_2 \cup H$ is $q$-partite.    
\end{lem}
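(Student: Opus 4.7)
The plan is to construct $H$ by gluing two booster-derived hinges through a shared intermediate $K_q^r$-clique $S^*$. Let $W$ be a set of $q-r$ fresh vertices disjoint from $V(S_1)\cup V(S_2)$, and set $V(S^*):=V(e)\cup W$. Fix once and for all a $q$-partition of $V(S_1)\cup V(S_2)\cup W$ in which $V(e)$ occupies $r$ parts (one vertex each) and each of the remaining $q-r$ parts receives exactly one vertex from each of $V(S_1)\setminus V(e)$, $V(S_2)\setminus V(e)$, and $W$; such a partition exists since $V(S_1)\cap V(S_2)=V(e)$. Within this fixed partite framework, apply the Cauchy-matrix construction from \cref{lem:Booster} to produce a $q$-partite $K_q^r$-booster $B_1$ for $S_1$ with $V(B_1)=V(S_1)\cup W\cup W_1$ for some fresh set $W_1$, and with $\on(B_1)[e]=S^*$. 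This is possible because the family $\{\mathcal{Q}_a\}_a$ of Cauchy decompositions of $K_{q*n}^r$ partitions all $K_q^r$-cliques, so there is a unique $a$ with $S^*\in\mathcal{Q}_a$; set $\on(B_1):=\mathcal{Q}_a$, which satisfies $S_1\notin\on(B_1)$ since $\mathcal{Q}_a[e]=S^*\neq S_1$, and take $\off(B_1):=\mathcal{Q}_{a'}\setminus\{S_1\}$ for the $a'$ with $S_1\in\mathcal{Q}_{a'}$. Symmetrically, build $B_2$ for $S_2$ with $\on(B_2)[e]=S^*$ and $V(B_2)=V(S_2)\cup W\cup W_2$, where $W_2$ is disjoint from $V(B_1)\cup V(S_2)$; by construction the $q$-partite structures of $B_1\cup S_1$ and $B_2\cup S_2$ agree on $V(e)\cup W$.

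Define $H_j:=B_j\setminus(S^*\setminus e)$ for $j\in\{1,2\}$ and set $H:=H_1\cup H_2\cup(S^*\setminus e)$. Since $V(B_1)\cap V(B_2)\subseteq V(e)\cup W=V(S^*)$ and $e\notin B_j$, the three pieces $H_1$, $H_2$, and $S^*\setminus e$ are pairwise edge-disjoint. Using the identity $H_j\cup(S^*\setminus e)=B_j$,
\[
H\cup(S_1\setminus e)=\bigl[H_1\cup(S_1\setminus e)\bigr]\sqcup\bigl[H_2\cup(S^*\setminus e)\bigr]=\bigl[(B_1\cup S_1)\setminus S^*\bigr]\sqcup B_2,
\]
which has $K_q^r$-decomposition $(\on(B_1)\setminus\{S^*\})\cup\off(B_2)$; symmetrically, $H\cup(S_2\setminus e)$ decomposes as $(\on(B_2)\setminus\{S^*\})\cup\off(B_1)$. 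Edge-disjointness of $H$ from $S_1\cup S_2$ follows because $B_j$ is edge-disjoint from $S_j$, $V(H_j)\cap V(S_{3-j})\subseteq V(e)$, and every edge of $S^*\setminus e$ contains a vertex of $W$ outside $V(S_1)\cup V(S_2)$.

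For independence, any $r$-edge of $H_j$ lying entirely in $V(S_1)\cup V(S_2)$ must have all $r$ vertices in $V(B_j)\cap(V(S_1)\cup V(S_2))=V(S_j)$, hence be an edge of the clique $S_j$, contradicting edge-disjointness of $B_j$ and $S_j$; every edge of $S^*\setminus e$ contains a vertex of $W$. Hence $V(S_1)\cup V(S_2)$ is independent in $H$. Finally, $H\cup S_1\cup S_2=(B_1\cup S_1)\cup(B_2\cup S_2)$, and since both summands are $q$-partite with matching partitions on their intersection $V(e)\cup W$, Remark~\ref{rem:partite} yields that $H\cup S_1\cup S_2$ is $q$-partite.

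The main obstacle, and the only non-routine step, is simultaneously arranging $\on(B_1)[e]=\on(B_2)[e]=S^*$ with compatible partite structures; this is handled by pre-fixing the $q$-partition on $V(e)\cup W$ and exploiting that the Cauchy parametrization realizes every $K_q^r$ through $e$ in $K_{q*n}^r$ as the $e$-partner of a unique decomposition, so the target clique $S^*$ can be forced on both sides.
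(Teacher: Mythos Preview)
Your proof is correct and shares the paper's overall architecture: introduce an intermediate clique $S^*$ through $e$, build a hinge from $S_1$ to $S^*$ and another from $S^*$ to $S_2$, and glue them along $S^*\setminus e$. The difference lies in how you manufacture the two constituent hinges. The paper invokes Lemma~\ref{lem:OrthBooster} to obtain an \emph{orthogonal} booster $B$ for $S_j$ and then sets $S^*:=\on(B)[e]$; orthogonality is what guarantees $S^*\cap S_j=\{e\}$, which is needed for $H_j\cup(S^*\setminus e)=B_j$. You instead bypass orthogonality entirely: by fixing $S^*$ in advance with $V(S^*)\cap V(S_j)=V(e)$ and then exploiting that the Cauchy decompositions $\{\mathcal{Q}_a\}$ partition all transversal cliques of $K_{q*n}^r$, you can simply \emph{choose} $\on(B_j)$ to be the unique $\mathcal{Q}_a$ containing $S^*$, forcing $\on(B_j)[e]=S^*$. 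This is a genuine shortcut---it renders Lemma~\ref{lem:OrthBooster} unnecessary for the present lemma---at the cost of reaching inside the proof of Lemma~\ref{lem:Booster} rather than using it as a black box. The paper's route is more modular (orthogonal boosters are reused elsewhere in the absorber construction), while yours is more self-contained for this particular statement.
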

\begin{proof} 
Let $S$ be a $K^r_q$-clique where $S_1 \cap S = S_2 \cap S = \{e\}$.
As mentioned above it follows from Lemma~\ref{lem:OrthBooster} that there exists a $K_q^r$-hinge $H_1$ for $S_1$ and $S$ where $S \cup S_1 \cup H_1$ is $q$-partite and a $K_q^r$-hinge $H_2$ for $S$ and $S_2$ where $S \cup S_2 \cup H_2$ is $q$-partite. By relabeling vertices, we assume without loss of generality that $V(H_1) \cap V(H_2) = V(S)$. 
Let $H := H_1\cup (S\setminus \{e\}) \cup H_2$.  Let $\hleft(H) := \hleft(H_1)\cup \hleft(H_2)$ and $\hright(H) := \hright(H_1)\cup \hright(H_2)$. Then $H$ is an independent $K_q^r$-hinge for $S_1$ and $S_2$ where $S_1 \cup S_2 \cup H$ is $q$-partite, as desired. 
\end{proof}

\subsection{Counting Edge-Intersecting Absorbers}\label{subsec:EdgeIntAbsorbers}

Now we are prepared to prove~\cref{thm:PartiteAbsorberExistence}, the existence of a rooted $q$-partite edge-intersecting $K_q^r$-absorber.
Our construction follows that in~\cite{DKP25}, where we additionally verify that it is edge-intersecting and further that it satisfies the definition of rooted partite degenerate in~\cref{def:PartiteDegeneracy}. 

\begin{lateproof}{thm:PartiteAbsorberExistence}
We assume without loss of generality that $v(L)\ge q+r$ since it suffices to prove the theorem for this case (to see this, if $v(L) < q+r$, we let $L'$ be obtained from $L$ by adding isolated vertices such that $v(L')=q+r$; then we find a $K_q^r$-absorber $A$ for $L'$ which is then also a $K_q^r$-absorber for $L$). 

By Theorem~\ref{thm:EdgeIntersectingIntegral}, there exists an $L$-edge-intersecting integral $K_q^r$-decomposition $\Phi$ of $L$ (with integers $w_Q$ for each $Q\in \binom {V(L)}{q}$). We view $\Phi$ as a multi-set of positive cliques $\Phi^+$ and a multi-set of negative cliques $\Phi^-$ (where each clique $Q$ appears with multiplicity $|w_Q|$). 

For each edge $e\in L$, let $M_e$ be a directed matching from the elements of $\Phi^-$ containing $e$ to all but one of the elements of $\Phi^+$ containing $e$. For each $r$-set $f$ in $\binom{V(L)}{r}\setminus L$, let $M_f$ be a directed matching from the elements from $\Phi^-$ containing $f$ to the elements of $\Phi^+$ containing $f$.

We construct a $q$-partite $L$-edge-intersecting $K_q^r$-absorber $A$ for $L$ as follows:
\begin{equation*}
    A := \left.\bigcup_{S \in \Phi^+ \cup \Phi^-}B_S\right. \cup \left.\bigcup_{f \in \binom{V(L)}{r},Q_1Q_2\in M_f}H_{f,Q_1,Q_2}\right.\text{\qquad where}
\end{equation*}
\begin{itemize}\itemsep.05in
    \item for each clique $S \in \Phi^+ \cup \Phi^-$, $B_S$ is an orthogonal $K_q^r$-booster for $S$ such that $B_S \cup S$ is $q$-partite whose existence is guaranteed by Lemma~\ref{lem:OrthBooster}, and
    \item for each $f \in \binom{V(L)}{r}$ and each $Q_1Q_2 \in M_f$,  $H_{f,Q_1,Q_2}$ is an independent $K^r_q$-hinge for $\on(B_{Q_1})[f]$ and $\on(B_{Q_2})[f]$ such that $\on(B_{Q_1})[f] \cup \on(B_{Q_2})[f] \cup H_{f,Q_1,Q_2}$ is $q$-partite whose existence is guaranteed by Lemma~\ref{lem:IndHinge} since 
     $\on(B_{Q_1})[f] \cap \on(B_{Q_2})[f] = \{f\}$, as $B_{Q_1}$ and $B_{Q_2}$ are orthogonal boosters. 
\end{itemize}
\vskip.05in

We assume without loss of generality that $V(B_S)\setminus V(S)$ is disjoint from every other booster. Similarly we assume without loss of generality that $V(H_{f,Q_1,Q_2})\setminus (\on(B_{Q_1})[f] \cup \on(B_{Q_2})[f])$ is disjoint from every other booster and hinge.

First note that $A$ is $L$-edge-intersecting because $\Phi$ is an $L$-edge-intersecting integral decomposition, and so every $B \in \left.\bigcup_{S \in \Phi^+ \cup \Phi^-}B_S\right.$ and every $H \in \left.\bigcup_{f \in \binom{V(L)}{r},Q_1Q_2\in M_f}H_{f,Q_1,Q_2}\right.$ is also $L$-edge-intersecting. Likewise, by repeated applications of \cref{rem:partite}, $A$ is $L$-rooted $q$-partite.

We also argue that $A$ is $(2, q)$-rooted partite degenerate. Note that there is a natural partition 
$$V(A)\setminus V(L) = \bigcup_{S \in \Phi^+\cup \Phi^-} \left( V(B_S)\setminus V(S) \right) \cup \bigcup_{f \in \binom{V(L)}{r}, Q_1Q_2 \in M_f} \left(V(H_{f, Q_1, Q_2}) \setminus \left(\on(B_{Q_1})[f] \cup \on(B_{Q_2})[f] \right)\right).$$
Fix an ordering of the elements of $\Phi^+\cup \Phi^-$ and an ordering of the tuples $(f, Q_1, Q_2)$ where $f \in \binom{V(L)}{r}$ and $Q_1Q_2 \in M_f$. 
Since each booster $B_S$ and hinge $H_{f, Q_1, Q_2}$ is $q$-partite by construction, the first condition of Definition~\ref{def:PartiteDegeneracy} is satisfied. Additionally, observe that each booster $B_S$ is $S$-rooted and each hinge $H_{f, Q_1, Q_2}$ is $\on(B_{Q_1})[f] \cup \on(B_{Q_2})[f]$-rooted where each $S, \on(B_{Q_1})[f],$ and $\on(B_{Q_2})[f]$ are isomorphic to $K_q^r$. Thus, each $B_S$ and $H_{f, Q_1, Q_2}$ has at most $2q$ roots and the second condition of Definition \ref{def:PartiteDegeneracy} is satisfied, as desired. 

Now we proceed to verify that $A$ is a $K_q^r$-absorber for $L$ as follows. Recall that by Definition~\ref{def:booster} each $B_S$ is edge-disjoint from $S$ and each hinge in the above construction is an independent hinge. Therefore, we have that $V(L)$ is independent in $A$ and $A$ is simple (that is the boosters and hinges are all pairwise edge-disjoint).

It remains to show that there exist $K_q^r$-decompositions $\mc{A}_1$ of $A$ and $\mc{A}_2$ of $A\cup L$ as follows:
\begin{align*} &\mc{A}_1: = \bigcup_{S\in \Phi^-} \bigg( \mc{O}{\rm n}(B_S)~\Big\backslash~~~~~\bigcup_{e\in S}~~~~~~\mc{O}{\rm n}(B_S)[e] \bigg) ~&\cup~&\bigcup_{f\in \binom{V(L)}{r}, Q_1Q_2\in M_f} \hleft(H_{f,Q_1,Q_2})~&\cup~&\bigcup_{S\in \Phi^+} \mc{O}{\rm ff}(B_S),\\
&\mc{A}_2 := \bigcup_{S\in \Phi^+} \bigg(\on(B_S)~\Big\backslash~\bigcup_{e\in S: S\in V(M_e)}\on(B_S)[e] \bigg)~&\cup~&\bigcup_{f\in \binom{V(L)}{r}, Q_1Q_2\in M_f} \hright(H_{f,Q_1,Q_2})~&\cup~&\bigcup_{S\in \Phi^-} \mc{O}{\rm ff}(B_S).\end{align*}
In other words, for $\mc{A}_1$, we use the \emph{on} decompositions of the negative clique boosters except we do not use the negative orthogonal cliques themselves; we use the \emph{left} decomposition of the hinges to decompose the edges of those unused cliques; finally we use the \emph{off} decompositions of the positive clique boosters.

For $\mc{A}_2$, we use the \emph{on} decompositions of the positive clique boosters except we do not use the positive orthogonal cliques themselves (except for those unmatched ones which decompose the edges of $L$); we use the \emph{right} decomposition of the hinges to decompose the edges of those unused cliques; finally we use the \emph{off} decompositions of the negative clique boosters.
\end{lateproof}

We remark for the observant reader that Theorem~\ref{thm:PartiteAbsorberExistence} implies that, if $L$ is a $q$-partite $r$-graph, then there exists a $q$-partite $L$-edge-intersecting $K_q^r$-absorber $A$ for $L$ such that $A \cup L$ is also $q$-partite. 

We are now ready to prove~\cref{thm:CountingAbsorbers}.

\begin{lateproof}{thm:CountingAbsorbers}
    This follows from~\cref{thm:PartiteAbsorberExistence} and~\cref{lem:EmbedDegenPartite}.
\end{lateproof}

\section{High Minimum Degree Omni-Absorber Theorem}\label{Sec:Omni}

In this section, we prove~\cref{thm:NWRefinedEfficientOmniAbsorber}. As described in~\cref{Subsec:PfOverviewAbsorbers}, we know there exists an omni-absorber $A$ in $K_n^r$ for our reserves graph $X$ by Theorem \ref{thm:RefinedEfficientOmniAbsorber}, but $A$ may use edges from $K_n^r$ that are not present in $G$.
Thus, we generalize the strategy employed by Delcourt, Lesgourgues, and the authors in~\cite{EMeetsNW} and described by the second author in~\cite{P25Survey} to the hypergraph setting: first, for each edge of $A$ we embed into $G$ a fake-edge, a gadget developed by Delcourt and the second author in~\cite{DPI} with the same divisibility properties as an edge. Then, for every $F\cong K^r_q$ in the decomposition family of $A$, we embed a private absorber into $G$ for the graph $F'$, built from $F$ by replacing the edges of $A$ by the embedding of their respective fake-edges. By the General Embedding Lemma presented in~\cite{EMeetsNW}, it suffices to show that each fake-edge and private absorber has many embeddings into $G$. This is accomplished in~\cite{EMeetsNW} through the low-rooted degeneracy of each gadget. Since the rooted degeneracy of hypergraph fake-edges remains low enough, we directly apply this strategy to embed fake edges. However, we overcome the insufficient rooted degeneracy of hypergraph absorbers by using~\cref{thm:PartiteAbsorberExistence}. In~\cref{subsec:GenEmbed}, we state the General Embedding Lemma from~\cite{EMeetsNW} and its required definitions, before proving~\cref{thm:NWRefinedEfficientOmniAbsorber} in~\cref{subsec:FakeEdges}.

\subsection{A General Embedding Lemma}\label{subsec:GenEmbed}

As mentioned, we require the General Embbedding Lemma from~\cite{EMeetsNW} but in order to state it we need the following definitions. First, recall that a hypergraph $W$ is a \emph{supergraph} of hypergraph $J$ if $V(J)\subseteq V(W)$ and $E(J)\subseteq E(W)$. Next, we require the definition of a supergraph system to describe the relationship between the gadgets we wish to embed and the subgraphs we wish to embed them on.

\begin{definition}[Edge-Intersecting, $C$-bounded Supergraph System]
Let $\mc{H}$ be a family of subgraphs of a hypergraph $J$. A \emph{supergraph system} $\mc{W}$ for $\mc{H}$ is a family $(W_H : H\in \mc{H})$ where for each $H\in \mc{H}$, $W_H$ is a supergraph of $H$ with $(V(W_H)\setminus V(H))\cap V(J)=\emptyset$ and for all $H'\ne H\in \mc{H}$, we have $V(W_H)\cap V(W_{H'}) \setminus V(J)= \emptyset$. We let $\bigcup \mc{W}$ denote $\bigcup_{H\in \mc{H}} W_H$ for brevity. We say $\mc{W}$ is \emph{edge-intersecting} if $W_H$ is $J$-edge-intersection for every $H\in \mc{H}$. For a real $C \geq 1$, we say that $\mc{W}$ is \emph{$C$-bounded} if $\max\{e(W_H),~v(W_H)\}\le C$ for all $H\in \mc{H}$.
\end{definition}

Next we require the notion of embedding a supergraph system into a host graph $G$ as follows.

\begin{definition}[Embedding a Supergraph System]
Let $J$ be a hypergraph and let $\mc{H}$ be a family of subgraphs of $J$. Let $\mc{W}$ be a supergraph system for $\mc{H}$. Let $G$ be a supergraph of $J$. An \emph{embedding} of $\mc{W}$ \emph{into} $G$ is a map $\phi : V(\bigcup \mc{W}) \hookrightarrow V(G)$ preserving edges such that $\phi(v)=v$ for all $v\in V(J)$. We let $\phi(\mc{W})$ denote $\bigcup_{e\in \bigcup W} \phi(e)$ (i.e.~the subgraph of $G$ corresponding to $\bigcup \mc{W}$).
\end{definition}

We are now prepared to state the General Embedding Lemma. The statement restricted to graphs appears in~\cite{EMeetsNW}, but the hypergraph version was provided in the appendix as Lemma~B.3. For the interested reader, we note that the lemma (and the above definitions) easily generalize to the multi-graph setting; we further note that the lemma easily generalizes to the setting where each $W_H$ is instead a family $\mc{W}_H$ of possible $W_H$ (namely by simply applying the lemma with $\gamma':= \frac{\gamma}{C\cdot 2^{C^r}}$ say and where we choose $W_H$ as the most common member of $\mc{W}_H$) but we do not require such generalizations here.

\begin{lem}[General Embedding]\label{lem:EmbedGeneral}
For every $C > r \ge 1,$ and$~\gamma\in (0,1]$, there exists $C'\ge 1$ such that the following holds for large enough $n$. Let $J\subseteq G\subseteq K_n^r$ with $\Delta(J)\le \frac{n}{C'}$. Suppose that $\mc{H}$ is a $C$-refined family of subgraphs of $J$ and $\mc{W}$ is a $C$-bounded edge-intersecting supergraph system of $\mc{H}$. If for each $H\in \mc{H}$ there exist at least $\gamma\cdot n^{|V(W_H)\setminus V(H)|}$ embeddings of $W_H$ into $G\setminus (E(J)\setminus E(H))$,
then there exists an embedding $\phi$ of $\mc{W}$ into $G$ such that $\Delta(\phi(\mc{W})) \le C'\cdot \Delta(J)$.
\end{lem}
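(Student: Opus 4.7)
The plan is to prove this lemma via a random embedding argument combined with the Lovász Local Lemma (LLL) and alteration. For each $H \in \mathcal{H}$, independently sample a uniformly random embedding $\phi_H$ of $W_H$ into $G \setminus (E(J) \setminus E(H))$ from the set of at least $\gamma \cdot n^{|V(W_H) \setminus V(H)|}$ such embeddings guaranteed by hypothesis. Combining these with the identity on $V(J)$ yields a candidate random map $\phi \colon V(\bigcup \mathcal{W}) \to V(G)$; the goal is to show that with positive probability $\phi$ is injective (hence a genuine embedding) and $\Delta(\phi(\bigcup \mathcal{W})) \le C' \cdot \Delta(J)$.

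The key probability estimate is that for any $H \in \mathcal{H}$, non-root vertex $u \in V(W_H) \setminus V(J)$, and target $v \in V(G)$, $\Pr[\phi_H(u) = v] \le 1/(\gamma n)$, since the number of embeddings of $W_H$ with $\phi_H(u) = v$ is at most $n^{|V(W_H)\setminus V(H)|-1}$, while the denominator is at least $\gamma \cdot n^{|V(W_H)\setminus V(H)|}$. This yields the collision bound $\Pr[\phi_H(u) = \phi_{H'}(u')] \le 1/(\gamma n)$ for distinct $H, H'$ by conditioning on $\phi_{H'}$, and likewise $\Pr[\phi_H(u) \in V(J) \setminus V(H)] \le |V(J)|/(\gamma n)$. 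For each edge $e \in W_H \setminus H$ with $t \ge 1$ non-root vertices and each $(r-1)$-set $S \subseteq V(G)$, a short case analysis on how $S$ intersects the root part of $e$ yields $\Pr[S \subseteq \phi_H(e)] \le O_C(1/(\gamma n^{t-1}))$. Combining these with careful double-counting of how many $(H,e)$ pairs can contribute to a fixed $S$, using $J$-edge-intersecting (each $e \cap V(J)$ lies in some edge of $J$) and $C$-refinement (each edge of $J$ is in at most $C$ members of $\mathcal{H}$), one shows that the expected degree at any $(r-1)$-set $S$ in $\phi(\bigcup \mathcal{W})$ is at most $d_J(S) + O_{C,\gamma}(\Delta(J))$.

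Concentration is then achieved by applying the LLL to bad events of three types: (a) pairwise collisions $\phi_H(u) = \phi_{H'}(u')$, (b) $\phi_H$ sending an external vertex to $V(J) \setminus V(H)$, and (c) an $(r-1)$-set whose degree exceeds $C' \cdot \Delta(J)$. The main obstacle is the dependency structure of the LLL, since each $\phi_H$ can in principle participate in bad events with many other $\phi_{H'}$'s --- $|\mathcal{H}|$ may be as large as $\Theta(\Delta(J) n^{r-1})$, so a naive pairwise LLL argument fails. I plan to overcome this by exploiting spatial locality: bad events involving disjoint image-regions of $V(G)$ are independent, each $\phi_H$ affects only $\le C$ vertices of $V(G)$, and the sparsity $\Delta(J) \le n/C'$ controls how many $H$'s can share any given vertex-region, producing a manageable effective dependency degree per bad event. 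An appropriately weighted LLL application (or equivalently, a two-phase alteration that first deletes embeddings causing conflicts and then re-embeds the deleted ones using the remaining slack in the degree budget) then yields the desired $\phi$ for an absolute constant $C' = C'(r, C, \gamma)$.
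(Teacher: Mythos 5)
The paper proves this lemma by a deterministic ``slotting'' construction: it builds an auxiliary bipartite graph whose left vertices are the members of $\mc{H}$ and whose right vertices are the edges of $G$ together with copies of $(r-1)$-sets, computes degree bounds in this bipartite graph using the refinement, edge-intersecting, and $\Delta(J)\le n/C'$ hypotheses, and then invokes a bipartite finishing lemma (a deficiency version of Hall's theorem) to extract an $A$-perfect matching, which gives the embedding. Your proposal takes a genuinely different route via random embeddings and the Local Lemma, but it has a gap that I do not think your sketch resolves.

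The problem is in the LLL dependency analysis. You correctly observe that each $\phi_H$ participates in $\Theta(|\mc{H}|)$ collision events (and $|\mc{H}|$ can be as large as $\Theta(\Delta(J)\cdot n^{r-1})$), so the naive dependency degree is far too large for an event probability of $O(1/(\gamma n))$. Your proposed fix is to ``exploit spatial locality: bad events involving disjoint image-regions of $V(G)$ are independent.'' But the LLL dependency graph must be determined by which \emph{random variables} each event reads, not by the (random) \emph{outcomes}: the collision event $\phi_H(u)=\phi_{H'}(u')$ always reads both $\phi_H$ and $\phi_{H'}$, regardless of where the images happen to land, so it is genuinely dependent on every other event sharing either of those two random variables. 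The image-regions are not fixed before the sampling, so you cannot use them to define a smaller dependency graph without circularity. There are legitimate ways to try to rescue this --- a lopsided LLL with a carefully verified negative-correlation condition, an entropy-compression argument, or partitioning $V(G)$ into blocks and restricting each $\phi_H$ to its own block so that disjointness is enforced by the sample space rather than hoped for as an outcome --- but none of these is what you describe, and each requires real work. Similarly, the ``two-phase alteration: delete and re-embed'' alternative needs an argument that re-embedding does not trigger cascading new conflicts, which is exactly the kind of accounting the matching formulation handles cleanly and your sketch does not. As written, the proof does not go through; the slotting-plus-matching argument avoids this issue entirely by converting the degree budget into explicit right-side capacities and never needing probabilistic concentration at all.
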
 

In~\cite{EMeetsNW}, a proof of~\cref{lem:EmbedGeneral} appears using an ``avoid the bad'' technique. For the interested reader, we include an alternate proof in the appendix using a ``slotting'' technique developed by Delcourt, Kelly, and the second author in~\cite{RefIII}.

\subsection{Embedding Fake Edges and Private Absorbers}\label{subsec:FakeEdges}

Next, we recall the following gadgets from~\cite{DPI}.

\begin{definition}[Anti-Edge and Fake-Edge]\label{def:AntiEdge}
Let $q>r\ge 1$. Let $f$ be a set of vertices of size $r$.\vspace*{0.05in}
\begin{itemize}\itemsep.05in
    \item An \emph{anti-edge on $f$}, denoted ${\rm AntiEdge}_q^r(f)$, is a set of new vertices $x_1,\ldots, x_{q-r}$ together with edges $\binom{f\cup \{x_i:i\in[q-r]\} }{r} \setminus \{f\}$.
    \item A \emph{fake-edge on $f$}, denoted ${\rm FakeEdge}_q^r(f)$, is a set of new vertices $x_1,\ldots, x_{q-r}$ together with a set of anti-edges $\{ {\rm AntiEdge}_q^r(T): T\in \binom{f\cup \{x_i:i\in[q-r]\} }{r} \setminus \{f\} \}$.
\end{itemize}
\end{definition}

Observe that fake edges have the same divisibility properties as actual edges, that is, if $F={\rm FakeEdge}_q(S)$, then for every $0 \le i \le r-1$ and $S' \subseteq S$ with $|S'| = i$, we have $d_F(S') \equiv 1 \mod \binom{q-i}{r-i}$. Therefore, replacing an edge $e$ of a graph $J$, where $V(e)=S$, by a fake edge on $S$ maintains the divisibility properties of $J$. The following properties of fake-edges will also be useful.

\begin{proposition}\label{prop:FakeEdgeProperties}
    Let $q>r\ge 1$. Let $f$ be a set of vertices of size $r$.\vspace*{0.05in}
    \begin{itemize}\itemsep.05in
        \item ${\rm FakeEdge}_q^r(f)$ has at most $(q-r)\cdot \binom{q}{r} \le q^{r+1}$ vertices and at most $q^{r(r+1)}$ edges.
        \item ${\rm FakeEdge}_q^r(f)$ has degeneracy rooted at $f$ at most $\binom{q-1}{r-1}$.
    \end{itemize}
\end{proposition}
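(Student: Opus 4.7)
The plan is to prove both bullets by directly unpacking the recursive construction in~\cref{def:AntiEdge}. Recall that ${\rm FakeEdge}_q^r(f)$ consists of (i) the roots $f$ of size $r$, (ii) a set of $q-r$ ``outer'' new vertices $x_1,\ldots,x_{q-r}$, and (iii) for each $T\in \binom{f\cup\{x_i:i\in[q-r]\}}{r}\setminus\{f\}$, an anti-edge ${\rm AntiEdge}_q^r(T)$, which introduces $q-r$ additional new vertices and exactly $\binom{q}{r}-1$ edges. Crucially, distinct anti-edges use disjoint sets of new inner vertices, so counts add up cleanly.

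For the vertex and edge counts, I would simply tally contributions. The total number of vertices is $r+(q-r)+(\binom{q}{r}-1)(q-r)$, which is bounded above by $(q-r)\binom{q}{r}+r\le q^{r+1}$ using $\binom{q}{r}\le q^r$. The total number of edges is $(\binom{q}{r}-1)^2$, since the $\binom{q}{r}-1$ anti-edges are pairwise edge-disjoint and each contributes $\binom{q}{r}-1$ edges; this is at most $q^{2r}\le q^{r(r+1)}$.

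The only nontrivial part is the rooted degeneracy, where I must exhibit an ordering of $V({\rm FakeEdge}_q^r(f))\setminus f$ under which each vertex incurs at most $\binom{q-1}{r-1}$ back-edges. I would order the outer vertices $x_1,\ldots,x_{q-r}$ first, then process each anti-edge on $T$ in turn, ordering its inner vertices as $y_1^T,\ldots,y_{q-r}^T$. An outer vertex $x_i$ contributes zero back-edges, because every edge of the fake-edge lies inside some anti-edge and therefore contains at least one inner vertex, none of which have been added yet. When adding the inner vertex $y_j^T$, the only edges of the fake-edge that contain $y_j^T$ and are already spanned are those of the anti-edge on $T$ whose remaining $r-1$ vertices lie in $T\cup\{y_1^T,\ldots,y_{j-1}^T\}$, a set of size $r+j-1$; this gives at most $\binom{r+j-1}{r-1}$ back-edges, maximized at $j=q-r$ to yield $\binom{q-1}{r-1}$. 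Note that $T$ itself is excluded from the anti-edge but does not contain $y_j^T$ anyway, so no subtlety arises there. The only point requiring care is to process the anti-edges sequentially so that back-edges do not accumulate across different anti-edges; this is automatic since distinct anti-edges share no edges and their inner vertex sets are disjoint. There is no real obstacle here, as the entire proposition is a straightforward bookkeeping exercise once the ordering is fixed.
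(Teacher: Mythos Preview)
Your proof is correct, and the paper in fact states this proposition without proof, so there is nothing to compare against. One cosmetic remark: your exact vertex count $r+(q-r)+(\binom{q}{r}-1)(q-r)=(q-r)\binom{q}{r}+r$ exceeds the intermediate bound $(q-r)\binom{q}{r}$ asserted in the proposition by exactly $r$; this is because you (correctly) include the $r$ root vertices of $f$, whereas the proposition appears to count only the new vertices. Either way the final bound $q^{r+1}$ holds, and it is only this final bound that is used elsewhere in the paper. The degeneracy argument is the substantive part and your ordering---outer vertices first, then inner vertices anti-edge by anti-edge---together with the observation that every edge of ${\rm FakeEdge}_q^r(f)$ contains at least one inner vertex, gives exactly the claimed $\binom{q-1}{r-1}$.
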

We note for the interested reader that a collection of (well-chosen) fake edges can be viewed as a \textit{refiner}, a key concept in the proof of~\cref{thm:RefinedEfficientOmniAbsorber} from \cite{DPI}, which we do not require here.

The following lemma concerns embedding hypergraphs with small rooted degeneracy into a high minimum degree host hypergraph. This is the hypergraph analog of Lemma~3.6 in~\cite{EMeetsNW}.

\begin{lem}\label{lemma:DegeneracyEmbedding}
    For all $\varepsilon\in (0,1)$ and $C, d\ge 1$, there exists $\gamma \in (0,1)$ such that for all large enough $n$ the following holds: Let $G\subseteq K_n^r$ with $\delta(G) \ge (1-\frac{1}{d}+\varepsilon)n$. Let $H$ be an $r$-graph and $R\subseteq V(H)$ such that $v(H)\le C$, $R$ is independent in $H$, and $H$ has rooted degeneracy at $R$ at most $d$. If $R'\subseteq V(G)$ with $|R|=|R'|$, then there exist at least $\gamma\cdot n^{|V(H)\setminus R|}$ embeddings $\phi$ of $H$ into $G$ with $\phi(R)=R'$. 
\end{lem}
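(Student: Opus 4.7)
The plan is a greedy vertex-by-vertex embedding argument exploiting the rooted degeneracy ordering of $H$ together with the minimum $(r-1)$-degree condition on $G$. First, I would fix any bijection from $R$ to $R'$ to define $\phi$ on the root vertices. By hypothesis there is an ordering $v_1,\dots,v_m$ of $V(H)\setminus R$, with $m := |V(H)\setminus R|\le C$, such that for each $i\in[m]$ the ``back edge'' set $B_i := \{e\in E(H): v_i\in e\subseteq R\cup\{v_1,\dots,v_i\}\}$ has size at most $d$. I would then extend $\phi$ one vertex at a time, counting the number of valid images for $v_i$ at each step.

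At step $i$, a vertex $w\in V(G)$ is a valid choice for $\phi(v_i)$ provided (a) $w$ differs from all previously used vertices (at most $C$ vertices to avoid), and (b) for each $e\in B_i$, the $r$-set $\phi(e\setminus\{v_i\})\cup\{w\}$ lies in $E(G)$. For each such $e$, the set $S_e := \phi(e\setminus\{v_i\})$ is a well-defined $(r-1)$-subset of $V(G)$ (its size being exactly $r-1$ by injectivity of $\phi$ on the previously embedded vertices), so the hypothesis $\delta_{r-1}(G)\ge (1-1/d+\varepsilon)n$ implies at most $(1/d - \varepsilon)n$ choices of $w$ fail to extend $S_e$ to an edge of $G$. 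A union bound over the at most $d$ members of $B_i$ gives at most $(1-d\varepsilon)n + C$ forbidden vertices, so for $n$ sufficiently large at least $(d\varepsilon/2)\cdot n$ valid choices remain for $\phi(v_i)$.

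Multiplying over the $m\le C$ steps produces at least $((d\varepsilon)/2)^m\cdot n^m$ embeddings extending the fixed image of $R$, and setting $\gamma := (\min\{d\varepsilon/2,\, 1\})^C$ yields the claimed bound of $\gamma\cdot n^{|V(H)\setminus R|}$. There is no real obstacle: the degeneracy threshold $d$ in the hypothesis is precisely calibrated to the codegree lower bound $1-1/d$, so the union bound leaves a linear fraction of valid choices at every step. The only minor care needed is that the injectivity of $\phi$ on already-embedded vertices guarantees each $S_e$ has size exactly $r-1$, and that the additive loss of at most $C$ used vertices is absorbed by taking $n$ large enough (specifically, $n \ge 2C/(d\varepsilon)$ suffices).
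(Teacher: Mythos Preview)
Your proof is correct and follows essentially the same greedy one-vertex-at-a-time embedding via the rooted degeneracy ordering as the paper. Your additive union bound (yielding $d\varepsilon n - C$ valid choices per step) is in fact cleaner than the multiplicative expression $(1-\tfrac{1}{d}+\varepsilon)^d n - C$ the paper writes, but the underlying argument is identical.
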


\begin{proof}
    Let $h=|V(H)|$ and $w=|V(W)|$. Let $\gamma := (1-\frac{1}{d} + \frac{\eps}{2})^{d\cdot |V(H)\setminus R|}.$ Fix an ordering $v_1,\ldots, v_{w-h}$ of the vertices $V(W)\setminus V(H)$ such that for all $i\in [w-h]$, we have $|N_W(v_i)\cap (V(H)\cup \{v_j: 1\le j < i\})| \leq d$. We embed the vertices $v_1,\ldots, v_{w-h}$ one at a time in $G$. For every $i\in [w-h]$, there are at least $(1-\frac{1}{d} + \eps)^d\cdot n-C$ choices for where to embed the vertex $v_i$. Thus, since $n$ is large enough, there are at least $((1 - \frac{1}{d} + \eps)^dn-C)^{|V(H)\setminus R|} \ge \gamma \cdot n^{|V(H)\setminus R|}$ embeddings of $W$ in $G$.
\end{proof}

We thus have the required tools to embed a refined omni-absorber into a high minimum degree graph $G$. We are now prepared to prove~\cref{thm:NWRefinedEfficientOmniAbsorber} as follows.

\begin{proof}[Proof of Theorem~\ref{thm:NWRefinedEfficientOmniAbsorber}]
Fix integers $q > r$. Let $C_0$ be an integer such that~\cref{thm:RefinedEfficientOmniAbsorber} (the existence of $C_0$-refined $K^r_q$-omni-absorbers in $K^r_n$) holds. Let $c_1 >0$ and $\gamma_1\in(0,1)$ be such that~\cref{thm:CountingAbsorbers} holds for $t = \binom{q}{r}\cdot q^{r+1}$. Recall from the proof of~\cref{thm:CountingAbsorbers} that $c_1< 1$ and there exists an integer $C_A \ge 1$ depending only on $q, r,$ and $t$ such that the absorbers constructed in the proof of~\cref{thm:CountingAbsorbers} have at most $C_A$ vertices. Thus there exists $\eps > 0$ such that $c_1/\binom{q}{r-1} \le 1/\binom{q-1}{r-1} - \eps.$ Let $\gamma_2\in(0,1)$ be such that~\cref{lemma:DegeneracyEmbedding} holds for $\eps$, $q^{r+1}$, and $d=\binom{q-1}{r-1}$. Let $\gamma:= \min\{\gamma_1, \gamma_2\}$ and $C_E := \max\{q^{r(r+1)},\ q^r + \binom{C_0}{r}\}.$ Let $C'\geq 1$ such that the General Embedding Lemma,~\cref{lem:EmbedGeneral}, holds for $C_E$ and $\gamma$. Finally, let $c:= c_1/2$ and $C:= \binom{q}{r-1} \left(1 + 2 \cdot (C')^2 \cdot C_0 \right)/c$.

Let $n$ be as large as necessary throughout the following argument. Let $X\subseteq  G\subseteq K^r_n$ with $\delta(G)\ge \left(1 - \frac{c}{\binom{q}{r-1}} \right) n$ and $\Delta(X) \leq n/C$. Let $\Delta:=\max\left\{\Delta(X),~n^{1- 1/r}\cdot \log n\right\}$. By~\cref{thm:RefinedEfficientOmniAbsorber}, there exists a $C_0$-refined $K^r_q$-omni-absorber $A_0$ for $X$ in $K_n^r$ with $\Delta(A_0) \leq C_0\cdot \Delta\leq C_0 \cdot n/C$, with decomposition family $\mc{F}_0$ and decomposition function $\mc{Q}_0$. Recall that we aim to prove that there exists a $C$-refined $K_q$-omni-absorber $A$ for $X$ in $G$ with $\Delta(A) \le C\cdot \Delta$. To accomplish this goal, we use the General Embedding Lemma to replace each edge of $A_0$ by a fake-edge in $G$, and therefore each copy of $K^r_q$ in its decomposition family by a hypergraph $F$, and then find a private absorber in $G$ for each such hypergraph~$F$. In order to apply the General Embedding Lemma, we require that $A_0$ be a subgraph of our host graph, though we do not embed upon the edges of $A_0$ at any point in this construction. To that end, let $G_0 := (G\cup A_0)\setminus X$.

First we replace the edges of $A_0$ in $G_0$ with fake-edges in $G\setminus X$. Let $J:=A_0$ and $\mc{H}:=\set*{\{e\}\colon e\in J}$. We define a supergraph system $\mc{W}:=(W_e: \{e\}\in \mc{H})$ where $W_e:= {\rm FakeEdge}^r_q(e)$. We start by embedding $\mc{W}$ into $G_0$. Observe that, by the definition of $C$, we have $(1+C_0)/C\leq c /\binom{q}{r-1}$. Therefore, since $n$ is large enough and $\eps$ satisfies $2c/\binom{q}{r-1} \le 1/\binom{q-1}{r-1} - \eps$, for every edge $e\in J$ we have:

\[ \delta(G_0\setminus(J\setminus \{e\})) \ge \delta(G) - \Delta(X) - \Delta(A_0) \ge \left(1 - \frac{c}{\binom{q}{r-1}} - \frac{1+C_0}{C} \right)n \ge \left(1 - \frac{2c}{\binom{q}{r-1}} \right)n \ge \left(1 - \frac{1}{\binom{q-1}{r-1}} + \eps \right)n.\]

Observe that, by~\cref{prop:FakeEdgeProperties}, each $W_e$ has at most $(q-r) \cdot \binom{q}{r} \le q^{r+1}$ vertices and degeneracy rooted at $V(e)$ at most $\binom{q-1}{r-1}$. Therefore, by~\cref{lemma:DegeneracyEmbedding}, there are at least $\gamma\cdot n^{|V(W_e)\setminus V(e)|}$ embeddings of $W_e$ into $G_0\setminus (J\setminus \{e\})$. Note that $\mathcal{H}$ is an $r$-refined family and, since each $W_e$ has at most $q^{r(r+1)}$ edges, $\mc{W}$ is a $C_E$-bounded, edge-intersecting family. Since $C_0 \le C / C'$ guarantees $\Delta(J) \le n/C'$, by~\cref{lem:EmbedGeneral}, there exists an embedding $\phi$ of $\mc{W}$ into $G_0$ such that 
$$\Delta(\phi(\mc{W})) \le C' \cdot \Delta(A_0)\le C' \cdot C_0 \cdot \frac{n}{C}.$$
In particular, note that $\phi(\mc{W}) \subseteq G \setminus X$. 
\bigskip

 Let $J':=X\cup \phi(\mc{W})$. Observe that since $C\geq 2(C')^2\cdot C_0$, we have 
    $$\Delta(J')\leq \Delta(X)+ C'\cdot \Delta(A_0)\leq 2\cdot C'\cdot C_0\cdot \frac{n}{C}\leq \frac{n}{C'}.$$  
      Let
    \[\mc{H}':=\big\{ (F\cap X)\cup \bigcup_{e\in F\setminus X} \phi(W_e)\colon F\in \mc{F}_0\big\}.\] 
Observe that $\mc{H}'$ is a $C_0$-refined family of subgraphs of $J'$. Also note that by our choice of $C$, we have $(1+C'\cdot C_0)/C \le c/\binom{q}{r-1}.$ Thus, for each $H \in \mc{H}'$, the minimum degree of $(G\setminus X) \setminus (J' \setminus H)$ is at least
$$\delta(G) - \Delta(X) - \Delta(\phi(\mc{W})) \ge \left(1 - \frac{c}{\binom{q}{r-1}} - \frac{1+C' \cdot C_0}{C} \right)n \ge \left(1 - \frac{2c}{\binom{q}{r-1}} \right)n = \left(1 - \frac{c_1}{\binom{q}{r-1}} \right)n.$$
Also note that each $H\in\mc{H}'$ is $K_q^r$-divisible since a fake edge has the same divisibility properties as an edge. Furthermore, each $H$ satisfies $v(H) \le \binom{q}{r}q^{r+1}$. Therefore, for each $H\in\mc{H}'$, let $A_{H}$ be an $H$-edge-intersecting $K^r_q$-absorber for $H$ such that there are at least $\gamma \cdot n^{|V(A_{H})\setminus V(H)|}$ embeddings of $A_{H}$ into $(G\setminus X) \setminus (J' \setminus H')$ as guaranteed by~\cref{thm:CountingAbsorbers}, and such that $A_H$ and $A_{H'}$ are edge-disjoint for distinct $H,H'\in\mc{H}'$. We define a supergraph system $\mc{W}':=(W_H: H\in \mc{H}')$ where $W_H=H\cup A_H$. By construction, $\mc{W}'$ is an edge-intersecting family. Also note that every element of $\mc{W}'$ has at most $q^r + \binom{C_0}{r}$ edges, hence $\mc{W}'$ is $C_E$-bounded. Furthermore, since $e(H) \le \binom{q}{r}q^{r(r+1)}$ for every $H \in \mc{H}'$, we have that $\mc{H}'$ is $\binom{q}{r}q^{r(r+1)}$-refined. Therefore, by~\cref{lem:EmbedGeneral}, there exists an embedding $\phi'$ of $\mc{W}'$ into $G_0$ such that $\Delta(\phi'(\mc{W}')) \le C'\cdot \Delta(J')$.

For each $F \in \mc{F}_0$, let $F'\in\mc{H}'$ such that $F':= (F \cap X) \cup \bigcup_{e \in F \setminus X} \phi(W_e)$. Let $A=\phi(\mc{W})\cup\phi'(\mc{W}')$ for each $F'$. We first define a decomposition family $\mc{F}_A$ for $A$ as follows. We include in $\mc{F}_A$ both the $K^r_q$-decomposition of $\phi'(A_{F'})$ and of $F' \cup \phi'(A_{F'})$. Observe that these decompositions exist because $A_{F'}$ is a $K^r_q$-absorber of $F'$.

We then define a decomposition function $\cQ_A$ for $A$ as follows. Given a $K^r_q$-divisible $L \subseteq X$, for every $F\in \mc{F}_0$, we let $\cQ_{A}(L)$ contain the $K^r_q$-decomposition of $F' \cup \phi'(A_{F'})$ if $F \in \cQ_{A_0}(L)$ and of $\phi'(A_{F'})$ if $F \in \mc{F}_0\setminus \cQ_{A_0}(L)$. By construction, we have $\Delta(A)\leq \Delta(\phi(\mc{W}))+\Delta(\phi'(\mc{W}')) \le 2C'\cdot C_0 \cdot \Delta\leq C\cdot \Delta$, while every edge of $A\cup X$ is in at most $C_0+2$ elements in $\mc{F}_A$. We obtain that $A$ is a $C$-refined $K^r_q$-omni-absorber for $X$ in $G$ with decomposition family $\mc{F}_A$ and decomposition function $\cQ_A$, as desired.
\end{proof}

\section{Regularity Boosting}\label{Sec:boosting}

In this section, we prove~\cref{lem:LowWeightFracDecomposition} and, as a corollary, Theorem~\ref{thm:NWRegBoost}. To achieve this, we generalize the strategy from Delcourt, Lesgourgues, and the authors in~\cite{EMeetsNW} to the hypergraph setting: we first use the Fixed Fractional Decomposition Lemma from~\cite{EMeetsNW} and Lang's Inheritance Lemma for Minimum Degree from~\cite{lang2023tiling} to prove~\cref{lem:LowWeightFracDecomposition} by showing that, if $G$ has minimum degree above the fractional decomposition threshold, every edge is contained in many subgraphs of $G$ that admit a fractional decomposition, whose weight we appropriately rescale. Then~\cref{thm:NWRegBoost} follows by random sampling and Chernoff bounds. 

We require the following additional notation and definitions. Given an $F$-weighting $\psi$ of a hypergraph $G$ and an edge $e$ of $G$, we let $\partial\psi(e)$ denote the total weight of $\psi$ over the edge $e$, that is $\partial\psi(e):= \sum_{F: e\subseteq F} \psi(F)$. Throughout this section, if $\cH$ is a family of copies of $K^r_q$ in a graph $G$, we use notation as if $\cH$ is a hypergraph with $V(\cH) = V(G)$ and $E(\cH) = \cH \subseteq \binom{V(G)}{q}$.

The following is a simplified version of Lemma~4.3 in~\cite{EMeetsNW}.

\begin{lem}[Fixed Fractional Decomposition \cite{EMeetsNW}]\label{lem:Fixed}
    For every $r$-graph $F$ and real $\varepsilon$, there exist an integer $n_0\geq 1$, such that following holds for every $n\geq n_0$: Let $G$ be an $r$-graph on $n$ vertices with minimum degree at least $(\delta^*_F+\varepsilon)n$, and let $\varphi:E(G)\to[1-\frac{1}{e(G)},1]$. Then there exists a fractional $F$-packing $\Phi$ of $G$ such that $\partial\Phi(e)=\varphi(e)$ for every edge $e\in G$.
\end{lem}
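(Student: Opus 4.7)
My plan is a two-step argument. First, reduce the problem to realizing a small non-uniform target as the boundary $\partial\Psi$ of a fractional $F$-packing. Second, use the flexibility of fractional $F$-decompositions --- amplified by the $\varepsilon$-slack in the minimum degree --- to solve the resulting linear feasibility problem.

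Since $\delta(G)\ge(\delta^*_F+\varepsilon)n$, the graph $G$ itself admits a fractional $F$-decomposition $\Phi_0$. Using an inheritance lemma of Lang~\cite{lang2023tiling} to obtain fractional $F$-decompositions on many subgraphs and averaging, I may further arrange $\Phi_0(K)\ge\mu$ for every copy $K$ of $F$ in $G$, where $\mu=\Omega_{F,\varepsilon}(n^{-(v(F)-r)})$. Setting $\eta(e):=1-\varphi(e)\in[0,1/e(G)]$, the task reduces to finding a fractional $F$-packing $\Psi$ with $\partial\Psi(e)=\eta(e)$ for every $e\in G$ and $\Psi(K)\le\mu$ for every copy $K$; then $\Phi:=\Phi_0-\Psi$ is a non-negative fractional $F$-packing of $G$ satisfying $\partial\Phi=\varphi$ as required.

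To construct $\Psi$, I would solve the feasibility LP $\{\sum_{K\ni e} x_K = \eta(e),\ 0\le x_K\le\mu\}$ by exploiting the fact that many local subgraphs of $G$ also admit fractional $F$-decompositions. Specifically, for each $e\in G$, there are many $s$-sets $S$ (for a suitably large constant $s$) with $e\in E(G[S])$ and $\delta(G[S])\ge(\delta^*_F+\varepsilon/2)|S|$; each such $G[S]$ carries its own fractional $F$-decomposition. Taking a judicious convex combination of these local decompositions, with weights tuned by $\eta$, produces a packing whose $\partial$ is approximately $\eta$. The uniform smallness $\eta(e)\le 1/e(G)$, the total mass bound $\sum_e\eta(e)\le 1$, and the abundance of local decompositions together keep each $\Psi(K)$ well below $\mu$.

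The main obstacle is achieving \emph{exact} equality $\partial\Psi=\eta$, since a single convex combination typically yields only an approximate target. I would close this gap iteratively: start with an initial $\Psi_0$ built as above, then repeatedly correct the residual discrepancy $\eta-\partial\Psi_i$ by another fractional packing of the same kind, with the size of successive residuals geometrically shrinking thanks to the $\varepsilon$-slack in the minimum-degree hypothesis (equivalently, the edge-to-copy incidence operator restricted to copies of $F$ in $G$ is effectively invertible in a neighborhood of the uniform decomposition). Pointwise non-negativity of $\Phi=\Phi_0-\Psi$ is secured by the lower bound $\Phi_0(K)\ge\mu$ together with the uniform control $\Psi(K)\le\mu$ that the construction maintains throughout the iteration.
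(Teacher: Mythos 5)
Your proposal is substantially more complicated than the paper's argument, and it contains gaps at exactly the hard points. The paper's proof is short: for each $e\in G$ it uses a fractional $F$-decomposition $\Phi_e$ of $G-e$ (which exists because $\delta(G-e)\ge(\delta^*_F+\varepsilon/2)n$ for $n$ large), sets $\lambda_e:=e(G)\bigl[\varphi(e)-(1-\tfrac{1}{e(G)})\bigr]\in[0,1]$, forms the convex combination $\Phi'_e:=\lambda_e\Phi_0+(1-\lambda_e)\Phi_e$ (a packing with $\partial\Phi'_e(e)=\lambda_e$ and $\partial\Phi'_e(f)=1$ for $f\ne e$), and takes $\Phi:=\frac{1}{e(G)}\sum_{e\in G}\Phi'_e$; a direct computation gives $\partial\Phi(f)=\varphi(f)$ exactly. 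The key trick you are missing is to exploit fractional decompositions of $G-e$ --- this single idea gives you exact equality in one shot, with no iteration and no constraints $\Psi(K)\le\mu$ to police.

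The gaps in your route are genuine, not cosmetic. First, your central assertion that ``the edge-to-copy incidence operator is effectively invertible in a neighborhood of the uniform decomposition'' is precisely the content of the lemma being proved and cannot be taken for granted; establishing it is what the argument has to do, and it is where the slack $1/e(G)$ must actually appear. Second, the iterative correction scheme with ``geometrically shrinking residuals'' presupposes you can solve essentially the same exact-boundary problem at each stage, which is circular, and it gives no control ensuring the intermediate iterates stay non-negative and stay below $\mu$. Third, the detour through Lang's inheritance lemma and the bound $\Phi_0(K)\ge\mu$ is unnecessary for this lemma (it is what the paper uses for Lemma~\ref{lem:LowWeightFracDecomposition}, the \emph{next} statement, not this one). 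The lemma here needs no quantitative lower bound on weights at all --- only the existence of fractional decompositions of $G$ and of each $G-e$, followed by convex averaging.
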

In \cite{EMeetsNW},~\cref{lem:Fixed} is stated only for graphs, but its proof holds in our hypergraph setting. For completeness, we include said proof in the appendix.

Another ingredient in our proof of~\cref{lem:LowWeightFracDecomposition} is the following special case of Lang's Inheritance Lemma~\cite{lang2023tiling}.

\begin{lem}[Inheritance Lemma for minimum degree~\cite{lang2023tiling}]\label{lem:LangInheritance}
    For every $\varepsilon\in(0,1)$, there exists $s_0$ such that for all $s\geq s_0$, the following holds for every $n$ large enough. Let $\delta\geq 0$, and let $G$ be an $n$-vertex $r$-graph with $\delta(G)\geq (\delta+\varepsilon)(n-1)$. Then for every $r$-set $R\subseteq V(G)$, there are at least $(1-e^{-\sqrt{s}})\binom{n-r}{s-r}$ distinct $s$-sets $S\subseteq V(G)$ such that $R\subseteq S$ and $\delta(G[R])\geq (\delta+\varepsilon/2)(s-1)$.
\end{lem}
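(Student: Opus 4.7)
The plan is to prove the inheritance lemma via a straightforward sampling argument together with a Chernoff-type concentration bound for the hypergeometric distribution. I would first observe that the stated conclusion $\delta(G[R])\geq (\delta+\varepsilon/2)(s-1)$ is a typo for $\delta(G[S])\geq (\delta+\varepsilon/2)(s-1)$, since otherwise the statement is independent of $S$. Fix an $r$-set $R\subseteq V(G)$ and let $S$ be drawn uniformly at random from the $\binom{n-r}{s-r}$ supersets of $R$ of size $s$. The goal is then to show that with probability at least $1-e^{-\sqrt{s}}$, every $(r-1)$-set $T\subseteq S$ satisfies $d_{G[S]}(T)\geq (\delta+\varepsilon/2)(s-1)$.

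The key step is to fix an $(r-1)$-subset $T\subseteq V(G)$ and estimate $d_{G[S]}(T)$ conditional on $T\subseteq S$. Conditional on $T\cup R\subseteq S$, the remaining $s-|T\cup R|$ vertices of $S$ are distributed uniformly without replacement in $V(G)\setminus (T\cup R)$. The quantity $d_{G[S]}(T)=|\{v\in S\setminus T:T\cup\{v\}\in G\}|$ is therefore a hypergeometric random variable, and since $d_G(T)\geq(\delta+\varepsilon)(n-1)$ by hypothesis, its expectation is at least $(\delta+\varepsilon)(s-r+1)-O(r)$. For $s$ at least some threshold $s_0(\varepsilon,r)$, this exceeds $(\delta+2\varepsilon/3)(s-1)$ with a constant multiplicative gap, and so Hoeffding's inequality for sampling without replacement yields that the probability $d_{G[S]}(T)<(\delta+\varepsilon/2)(s-1)$ is at most $\exp(-c\cdot s)$ for some $c=c(\varepsilon,r)>0$.

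Finally, I would take a union bound over the at most $\binom{s}{r-1}$ choices of $(r-1)$-subset $T\subseteq S$, which bounds the probability that $S$ is bad by $\binom{s}{r-1}\cdot \exp(-c\cdot s)\leq e^{-\sqrt{s}}$ once $s$ is large enough. Translating this back into a counting statement, at most $e^{-\sqrt{s}}\binom{n-r}{s-r}$ of the $s$-sets $S$ containing $R$ are bad, which is the required bound.

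The main technical care is ensuring that the Chernoff-type bound for hypergeometric sampling gives exponential decay in $s$ with a constant that does not depend on $n$; this is standard but requires choosing $s_0$ large enough that the slack between $(\delta+\varepsilon)(n-1)$, $(\delta+2\varepsilon/3)(s-1)$ and $(\delta+\varepsilon/2)(s-1)$ remains a constant multiplicative gap for all large $n$. A secondary, easier point is the mild case analysis when $T$ and $R$ overlap, but since only $O(r\cdot n^{r-2})$ of the $(r-1)$-sets $T$ meet $R$, this affects the union bound only in lower-order terms.
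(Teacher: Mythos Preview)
The paper does not include its own proof of this lemma; it is quoted as a special case of Lang's Inheritance Lemma from \cite{lang2023tiling} and used as a black box in Section~\ref{Sec:boosting}. So there is no in-paper argument to compare against.

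Your proposed argument is the standard one and is correct. The identification of the typo ($G[R]$ should read $G[S]$) is right. The hypergeometric concentration step is fine: conditional on $T\cup R\subseteq S$, the remaining $s-|T\cup R|$ vertices of $S$ are uniform in $V(G)\setminus(T\cup R)$, and the number of $G$-neighbours of $T$ among them is hypergeometric with mean at least $(\delta+\varepsilon)(s-1)-O_r(1)$, so Hoeffding's inequality for sampling without replacement gives failure probability $\exp(-c(\varepsilon)\,s)$ uniformly in $n$. One small point of presentation: to make the union-bound step airtight, phrase it as a first-moment computation,
\[
\Pr\big[\delta(G[S])<(\delta+\varepsilon/2)(s-1)\big]\ \le\ \sum_{T\in\binom{V(G)}{r-1}}\Pr[T\subseteq S]\cdot\Pr\big[d_{G[S]}(T)<(\delta+\varepsilon/2)(s-1)\ \big|\ T\subseteq S\big],
\]
and use that $\sum_T \Pr[T\subseteq S]=\binom{s}{r-1}$ exactly. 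This sidesteps the informality of taking a ``union bound over $T\subseteq S$'' when $S$ is itself random, and it absorbs the overlap cases $T\cap R\neq\emptyset$ without any separate analysis.
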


Now we are prepared to prove~\cref{lem:LowWeightFracDecomposition}

\begin{proof}[Proof of~\cref{lem:LowWeightFracDecomposition}]
    Observe that we may assume that $\delta^*_F < 1$ as otherwise the statement is vacuously true. Let $s > \frac{2 \delta}{\eps}+1$ such that~\cref{lem:LangInheritance} holds for $\varepsilon$. 
    Let $C:=2\cdot\binom{s-r}{v(F)-r}$. Let $n$ be large enough, and let $G$ be an $n$-vertex graph with $\delta(G)\geq (\delta^*_F+\varepsilon)n$. Let $\cS$ be the family of all $s$-sets in $V(G)$ with minimum degree at least $(\delta^*_F+\varepsilon/2)(s-1)$. For every edge $e\in G$, we define $\varphi(e)$ to be
    \[\varphi(e):=(1-e^{-\sqrt{s}})\cdot \frac{\binom{n-r}{s-r}}{|\{S\in\mc{S}:e\in S\}|}.\] By~\cref{lem:LangInheritance}, every edge $e\in G$ is in at least $(1-e^{-\sqrt{s}})\cdot\binom{n-r}{s-r}$ sets in $\mc{S}$. Therefore, by building, for every $S\in\mc{S}$, an $F$-decomposition $\Phi_S$ of $G[S]$ such that $\partial\Phi_S(e)=\varphi(e)$, and taking the sum of all these decompositions, we obtain an assignment such that the weight on each edge is constant; we then only need a scaling factor to obtain the desired $F$-decomposition. 
    
    Observe that, by~\cref{lem:LangInheritance}, we have $\varphi(e)\in[1-e^{-\sqrt{s}},1]$ and hence as $s$ is large enough, we have $\varphi(e)\geq 1-\frac{1}{s^r}$. Therefore, by~\cref{lem:Fixed}, for each $S\in\cS$, there exists a fractional $F$-packing $\Phi_S$ of $G[S]$ such that $\partial\Phi_S(e) = \phi(e)$ for all $e \in G[S]$. For each $H \in \binom{G}{F},$ we let $\Phi_S(H)=0$ if $V(H)\setminus S\ne\emptyset$ so as to extend the function to all copies of $F$ in $G$.

    Now we define the following fractional $F$-packing of $G$. For every $H\in\binom{G}{F}$, define
    \[\phi(H):= \frac{1}{(1-e^{-\sqrt{s}})\cdot\binom{n-r}{s-r}}\cdot\sum_{S\in\cS}\Phi_S(H).\]
    Since every $H\in \binom{G}{F}$ is in at most $\binom{n-v(F)}{s-v(F)}$ sets $S\in\cS$, we have
    \[\phi(H)\leq \frac{1}{(1-e^{-\sqrt{s}})\cdot\binom{n-r}{s-r}}\cdot \binom{n-v(F)}{s-v(F)}
    = \frac{\binom{s-r}{v(F)-r}}{(1-e^{-\sqrt{s}})\cdot\binom{n-r}{v(F)-r}}
    \leq \frac{C}{\binom{n-r}{v(F)-r}},    \]
    where we used $(1-e^{-\sqrt{s}})\geq 1/2$ for $s\geq 1$. Finally, by~\cref{lem:LangInheritance}, every edge $e\in G$ is in at least $\frac{1-e^{-\sqrt{s}}}{\varphi(e)}\cdot\binom{n-r}{s-r}$ sets in $\cS$. Therefore we obtain 
    \[\partial\phi(e)=\frac{1}{(1-e^{-\sqrt{s}})\cdot\binom{n-r}{s-r}}\cdot\sum_{S\in\cS}\partial\phi_S(e)=\frac{1}{(1-e^{-\sqrt{s}})\cdot\binom{n-r}{s-r}}\cdot\brackets*{\frac{1-e^{-\sqrt{s}}}{\varphi(e)}\cdot\binom{n-r}{s-r}\cdot \varphi(e)} = 1.
    \]
\noindent
    Hence $\phi$ is the desired $C$-low-weight fractional $F$-decomposition of $G$.
\end{proof}

Now~\cref{lem:LowWeightFracDecomposition} will imply~\cref{thm:NWRegBoost} by random sampling as follows.

\begin{lateproof}{thm:NWRegBoost}
    For $q>r \ge 2$ and $\varepsilon\in(0,1)$, let $C\geq 1$ such that~\cref{lem:LowWeightFracDecomposition} holds for $F=K^r_q$. Let $n$ be a large enough integer, and let $J \subseteq K_n^r$ have minimum degree at least $(\max\{\delta_{K^r_q}^*, 1 - \frac{1}{\binom{q-1}{r-1}} \}+\varepsilon)n$. 
    
    Let $\phi$ be a $C$-low-weight fractional $K^r_q$-decomposition of $J$ guaranteed by~\cref{lem:LowWeightFracDecomposition}, and let $d:=\frac{1}{C}\binom{n-r}{q-r}$. Observe that since $\phi$ is $C$-low-weight, we have that $\phi(H)\cdot d \in [0,1]$ for every $H$. Thus, we define $\cH$ to be a random subcollection of copies of $K^r_q$ in $J$, by including every $H\in\binom{J}{K^r_q}$ independently with probability $\phi(H)\cdot d$. Note that for every $e\in J$, we have $\Expect{|\cH(e)|}=\partial\phi(e)\cdot d = d$.

    By a standard application of 
    the Chernoff bound, we see that
    \[\Prob{\Big||\cH(e)|- d\Big|\geq n^{-(q-r)/3}\cdot C\cdot d} \leq 2e^{-(n^{-(q-r)/3}\cdot C)^2 d/3}. \]
    Note that since $n$ is large enough,
    \[2 \cdot \binom{n-r}{q-r} \cdot e^{-(n^{-(q-r)/3}\cdot C)^2 d/3} \le 2 \cdot \binom{n-r}{q-r} e^{- \frac{C (n-r)^{(q-r)/3}}{3 \cdot (q-r)^{(q-r)}}} \le 1.\]
    Therefore, by the Union Bound, there exists a family $\cH$ of copies of $K_q^r$ in $J$ such that every edge $e \in J$ is in $(\frac{1}{C} \pm n^{-(q-r)/3}) \binom{n-r}{q-r}$ copies of $K_q^r$ in $\cH$, as desired. 
\end{lateproof}

\section{The Finish}\label{sec:Finish}

The previous sections contain the necessary tools to perform the second, third, and fourth steps of our proof outline for \cref{thm:main}. In this section, we first include the remaining statements and proofs necessary for the first step of our proof outline, before combining these into a proof of \cref{thm:main}. 

\subsection{Reserves}

First, we prove~\cref{lem:highmindegreserves} using a standard application of the Chernoff bound~\cite{AS16}. 

\begin{lateproof}{lem:highmindegreserves}
    Let $\gamma > 0$ be small enough, in particular such that $\gamma \cdot \binom{q-1}{r-1}<1$ and $\gamma \le \frac{\eps^{q-r}}{2^{q-r}}.$ Let $n$ be a large enough integer. Let $X$ be the spanning subgraph of $G$ obtained by choosing each edge of $G$ independently with probability $p\in [n^{-\gamma},1)$.

    First, for $S \subseteq \binom{v(G)}{r-1}$, we have \(\Expect{|X(S)|}=p\cdot |G(S)| \le p \cdot n.\) Let $A_S$ be the event that $|X(S)| > 2pn$.  By the Chernoff bound and the fact that $n$ is large enough, we find that 
    \[\Prob{A_S} \leq e^{-p\cdot n / 3} < \frac{1}{2 \cdot n^{r-1}}.\]
    Hence, by the Union Bound, 
    \[\Prob{\bigcup_{S \in \binom{v(G)}{r-1}}A_S} < \frac{1}{2}.\]

    For $T \subseteq V(G)$ with $1\le |T|\le q-1$, we define $N_G(T)$ to be the common neighborhood of all $(r-1)$-subsets of $T$ in $G$, that is, 
\[N_G(T):= \left\{ v\in V(G)\setminus T: \{v\} \cup R \in E(G) \text{ for every } R\in \binom{T}{r-1}\right\}.\]
Similarly,
\[N_X(T):= \left\{ v\in V(G)\setminus T: \{v\} \cup R \in E(X) \text{ for every } R\in \binom{T}{r-1}\right\}.\]
Since $\delta(G)\ge (1-\frac{1}{\binom{q-1}{r-1}}+\varepsilon)n$ and $\binom{|T|}{r-1} \le \binom{q-1}{r-1}$, it follows that $|N_G(T)| \ge \eps n$.
Also, for every $v\in N_G(T)$, we have that $\Prob{v\in N_X(T)} = p^{\binom{|T|}{r-1}}$. Thus by Linearity of Expectation, we obtain
\[\Expect{|N_X(T)|} = \sum_{v\in N_G(T)} \Prob{v\in N_G(T)} = p^{\binom{|T|}{r-1}}\cdot |N_G(T)|.\]

Note that $|N_X(T)|$ is the sum of independent Bernoulli $\{0,1\}$-random variables. Let $B_T$ be the event that $|N_X(T)| < p^{\binom{|T|}{r-1}}\cdot |N_G(T)|/2$. By the Chernoff bound, we find that
\[\Prob{B_T} \le \exp \left[-p^{\binom{|T|}{r-1}}\cdot |N_G(T)| / 8\right] < \frac{1}{2\cdot q \cdot n^{q-1}},\]
where the last inequality is because $\gamma \cdot \binom{q-1}{r-1}<1$ and $n$ is large enough. Hence, by the the Union Bound, 
\[\Prob{ \bigcup_{T} B_T} < \frac{1}{2}.\]

Therefore, with positive probability, $\Delta(X)\leq 2pn$ and none of the $B_T$ happen. Fix such an outcome. For $e\in K^r_n\setminus X$, since none of the events $B_T$ occur, the number of copies of $K^r_q$ in $X\cup \{e\}$ containing $e$ is at least
\[
\frac{1}{(q-r)!}\prod_{i=r}^{q-1} \left(p^{\binom{i}{r-1}} \cdot \eps \cdot n \cdot \frac{1}{2} \right) \ge \frac{\eps^{q-r}}{2^{q-r}} \cdot p^{\binom{q}{r}-1} \cdot \frac{n^{q-r}}{(q-r)!} \ge \gamma \cdot p^{\binom{q}{r}-1} \cdot \binom{n}{q-r},
\]
where we use that $\sum_{i = r-1}^{q-1} \binom{i}{r-1} \le \binom{q-1}{r}$ for the first inequality, and the standard bound $\frac{n^k}{k!}\ge \binom{n}{k}$ and the fact that $\gamma \le \frac{\eps^{q-r}}{2^{q-r}}$ for the second inequality. Thus, there exists a spanning subgraph $X$ that satisfies the requirements of~\cref{lem:highmindegreserves}, as desired. 
\end{lateproof}

\subsection{Proof of Theorem~\ref{thm:main}}
 
Now we are prepared to prove~\cref{thm:main}.

\begin{lateproof}{thm:main} 
Note that if $r = 1$, for any integer $q$, the $K_q^r$-divisibility of $G$ is sufficient to guarantee that $G$ admits a $K_q^r$-decomposition. Thus, let $q > r \ge 2$ be integers and $\eps \in (0, 1)$. Let $n$ be as large as necessary throughout the proof. Fix $\eps_0 >0$ such that $\frac{1}{\binom{q}{r-1}} \le \frac{1}{\binom{q-1}{r-1}}+\eps_0$. Let $\eps':= \min \{\eps, \eps_0 \}$. Let $\gamma \in (0, 1)$ be as in~\cref{lem:highmindegreserves} for $q, r,$ and $\eps'$. Let $C \ge 1$ and $c \ge 0$ be as in~\cref{thm:NWRefinedEfficientOmniAbsorber} for $q, r,$ and $\eps'.$ Let $c_b \in (0, 1)$ be as in~\cref{thm:NWRegBoost} for $q, r,$ and $\eps/2$. Let $\rho \in (0,1)$ be such that $\parens*{c_b- n^{-1/3}}\cdot \binom{n-r}{q-r} \le  \parens*{\rho- n^{-1/3}}\cdot \binom{n}{q-r}$ and $\parens*{c_b+ n^{-1/3}}\cdot \binom{n-r}{q-r} \ge  \parens*{\rho+ n^{-1/3}}\cdot \binom{n}{q-r}$ for large enough $n$, and let $\alpha \in (0, 1)$ be as in~\cref{cor:NibbleReservesSpecific} for $\rho$. Finally, let $\sigma:= \min \left\{\gamma, \frac{\alpha - 1}{ \binom{q}{r} - 1} \right\}$, and let $p:= n^{-\sigma}$ and $\Delta:= 2pn$. 

Let $G$ be an $r$-graph such that $\delta(G) \ge \left( \max\left\{ \delta_{K_q^r}^*+ \eps, 1 -\frac{c}{\binom{q}{r-1}} \right\}\right) n$. Recall from the proof of~\cref{thm:NWRefinedEfficientOmniAbsorber} that $c \le 1$. Hence, $\delta(G) \ge \left(1 - \frac{1}{\binom{q-1}{r-1}} + \eps' \right)n$. Therefore, by~\cref{lem:highmindegreserves}, there exists $X \subseteq G$ with $\Delta(X) \leq \Delta$ such that every $e \in G\setminus X$ is in at least $\gamma \cdot p^{\binom{q}{r}-1}\binom{n}{q-r}$ $K_q^r$-cliques of $X \cup \{e\}$.

Observe that since $n$ is large enough, $p = n^{-\sigma} \le 1/2C$. Therefore, $\Delta(X) \le 2 \cdot n^{-\sigma} \cdot n \le \frac{n}{C}$, and hence, by~\cref{thm:NWRefinedEfficientOmniAbsorber}, there exists a $C$-refined $K^r_q$-omni-absorber $A$ for $X$ with $A \subseteq G$ and $\Delta(A) \le C\cdot \Delta$. 

Let $J:= G\setminus (A\cup X)$. Now we have
\begin{align*}
    \delta(J) & \ge \delta(G) - \Delta(A) - \Delta(X) \\
    & \ge \left( \max\left\{ \delta_{K_q^r}^* +\eps, 1 -\frac{c}{\binom{q}{r-1}} \right\}\right) n - (1 +C)n \\
    & \ge \left(\max \left(\delta_{K_q^r}^*, 1 - \frac{1}{\binom{q-1}{r-1}}\right) +\frac{\eps'}{2} \right)n,
\end{align*}
where the last inequality is because $(1+C)\cdot 2 \cdot n^{-\sigma} \le \frac{\eps'}{2}$ since $n$ is large enough. Thus, by~\cref{thm:NWRegBoost}, there exists a family $\mc{H}$ of copies of $K_q^r$ in $J$ such that every $e\in J$ is in $\parens*{c_b\pm n^{-(q-r)/3}}\cdot \binom{n-r}{q-r}$ copies of $K^r_q$ in $\mc{H}$. 

By our choice $p$,
\[\gamma \cdot p^{\binom{q}{r}-1}\binom{n}{q-r} \ge \frac{\gamma}{(q-r)^{q-r}} \cdot n^{- (\alpha-1) \left(\binom{q}{r}-1\right)/\left(\binom{q}{r}-1\right)}n^{q-r} = \frac{\gamma}{(q-r)^{q-r}}n^{q - r - \alpha +1} \ge n^{q - r - \alpha,}\] 
since $n$ is large enough. Therefore, by our choice of $X$, every edge of $J$ is in at least $K_q^r$-cliques of $X \cup \{e\}$. Also, by our choice of $\rho$, every edge $e\in J$ is in $\parens*{\rho\pm n^{-1/3}}\cdot \binom{n}{q-r}$ copies of $K^r_q$ in $\mc{H}$. Hence, by~\cref{cor:NibbleReservesSpecific}, there exists a $K_q^r$-packing $Q_1$ of $J \cup X$ covering all edges of $J$. Observe that since $Q_1$ is $K_q^r$-divisible since it admits a $K_q^r$-decomposition $\mc{Q}_1$. 

Let $L:=X \setminus Q_1$. Also note that $A$ is $K_q^r$-divisible as it admits a $K_q^r$-decomposition by the definition of omni-absorber. Since $G$ is $K_q^r$-divisible by assumption and $A$ and $Q_1$ are edge-disjoint $K_q^r$-divisible subgraphs of $G$, it follows that $L = G\setminus (Q_1 \cup A)$ is $K_q^r$-divisible. By the definition of omni-absorber, it follows that $L\cup A$ admits a $K_q^r$-decomposition $\mc{Q}_2$. Thus, $\mc{Q}_1 \cup \mc{Q}_2$ is a $K_q^r$-decomposition of $G$, as desired.
\end{lateproof}

\section{Further Directions}\label{Sec:Further}

Despite the recent breakthroughs on designs and hypergraph decompositions, many interesting open questions about hypergraph decompositions remain. As mentioned in the survey by the second author~\cite{P25Survey}, it would be interesting to prove a unified theorem of designs incorporating the various generalizations of Existence. In particular, it would be interesting to prove a hypergraph version of the ``\Erdos meets Nash-Williams' Conjecture" (see~\cite{EMeetsNW} for more history on this conjecture) which would thus generalize the High Girth Existence Conjecture which was proved by Delcourt and the second author in~\cite{DPII} using refined absorption. Similarly, it would be interesting to prove a probabilistic threshold (or spread) version of Nash-Williams' Conjecture and more generally of the hypergraph version of Nash-Williams' Conjecture (we refer the reader to~\cite{DKPIV} for further history of probabilistic thresholds for decompositions). 

The interested reader might wonder whether it is possible to formulate a very general set of conditions guaranteeing the existence of $K_q^r$-decompositions - at least in dense hypergraphs. Some notable examples of such general theorems were obtained for hypergraph matchings by Keevash and Mycroft~\cite{KM15} and for hypergraph tilings by Lang~\cite{lang2023tiling}.  

It is not too hard in light of the theory developed in~\cite{DPI} and~\cite{EMeetsNW} to modify the proofs in this paper to yield the following such theorem. (Of course the following would not capture the intricacies of the high girth, spread or random graph settings which require more direct access to the package of tools from refined absorption). Recall from~\cite{EMeetsNW} that a fractional $K_q^r$-decomposition $\psi$ of an $r$-graph $G$ is \textit{$\alpha$-seeded} if every copy of $K_q^r$ has weight at least $\frac{\alpha}{ \binom{n-r}{q-r}}$, and $\psi$ is \textit{$\eps$-almost} if $\partial\psi(e) \in [1-\varepsilon,~1]$ for each edge $e$ of $G$.

\begin{thm}
$\forall~C \ge 1$, $\exists~\varepsilon>0$ such that the following holds for sufficiently large $n$: If $G\subseteq K_n^r$ is $K_q^r$-divisible and satisfies both of the following
\vskip.05in
\begin{itemize}\itemsep.05in
    \item[(i)] \textbf{Robust Fractional:} $\exists$~a $C$-low-weight $\frac{1}{C}$-seeded $\varepsilon$-almost fractional $K_q^r$-decomposition of $G$, and
    \item[(ii)] \textbf{Many Boosters and Hinges:}  $\forall~ K_q^r$-clique $S$ in $K_n^r$, $\exists$~a $K_q^r$-booster $B$ with at least $\frac{1}{C}\cdot n^{v(B)-v(L)}$ copies of $B$ rooted at $V(S)$ in $G$, and $\forall~ K_q^r$-cliques $S_1,S_2$ in $K_n^r$ with $|V(S_1)\cap V(S_2)|=r$, $\exists$~a $K_q^r$-hinge $H$ with at least $\frac{1}{C}\cdot n^{v(H)-v(S_1\cup S_2)}$ copies of $H$ rooted at $V(S_1)\cup V(S_2)$ in $G$,
\end{itemize}
\vskip.05in
then there exists a $K_q^r$-decomposition of $G$.
\end{thm}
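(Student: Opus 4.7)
The plan is to retrace the four-step refined absorption outline used to prove Theorem~\ref{thm:main}, substituting at each step the appropriate piece of (i) or (ii) for the minimum degree hypothesis previously used; the overall shape of the argument is unchanged.

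At the reserves step, (i) forces every edge of $G$ to lie in at least $\frac{(1-\eps)}{C}\binom{n-r}{q-r}$ copies of $K_q^r$ in $G$, since each copy weighs at most $C/\binom{n-r}{q-r}$ while each edge receives total weight at least $1-\eps$. The same Chernoff--union bound as in the proof of Lemma~\ref{lem:highmindegreserves} then yields, after independently including each edge of $G$ in $X$ with probability $p := n^{-\sigma}$ for small $\sigma > 0$, a reserve set $X \subseteq E(G)$ with $\Delta(X) \le 2pn$ such that every $e \in G \setminus X$ lies in at least $n^{q-r-\alpha}$ copies of $K_q^r$ in $X \cup \{e\}$. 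For the omni-absorber step, I would copy the proof of Theorem~\ref{thm:NWRefinedEfficientOmniAbsorber}: produce a $C_0$-refined omni-absorber $A_0 \subseteq K_n^r$ for $X$ via Theorem~\ref{thm:RefinedEfficientOmniAbsorber}, then replace each edge of $A_0$ by a fake-edge and each $K_q^r$ in the decomposition family of $A_0$ by an edge-intersecting private $K_q^r$-absorber built from boosters and hinges as in Theorem~\ref{thm:PartiteAbsorberExistence}, and assemble the whole with Lemma~\ref{lem:EmbedGeneral}. The critical change is how the many-embeddings input to Lemma~\ref{lem:EmbedGeneral} is established: rather than invoking the non-uniform Tur\'an count of Lemma~\ref{lem:EmbedDegenPartite} (which relied on the minimum degree assumption), I would use hypothesis (ii) directly. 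Each layer of the partite absorber is either a booster rooted at a single $K_q^r$-clique or a hinge rooted at a pair of $K_q^r$-cliques sharing an edge, and the rooted embedding counts of these are precisely what (ii) supplies; a layer-by-layer induction mirroring the proof of Lemma~\ref{lem:EmbedDegenPartite} then promotes these per-layer counts to a count for the entire private absorber.

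For the regularity boost, the fractional decomposition $\psi$ from (i), restricted to $K_q^r$-copies lying in $J := G \setminus (A \cup X)$, loses only $o(1)$ weight per edge, since $\Delta(A \cup X) = o(n)$ and each copy weighs $O(n^{-(q-r)})$; it therefore remains $C$-low-weight, $\tfrac{1}{C}$-seeded, and $(\eps + o(1))$-almost on $J$. Sampling each $Q \in \binom{J}{K_q^r}$ independently with probability proportional to $\psi(Q)$, followed by Chernoff concentration as in the proof of Theorem~\ref{thm:NWRegBoost}, then produces a family $\mc{H}$ of $K_q^r$-copies in $J$ with the right regularity for nibble. Step~4 applies Lemma~\ref{cor:NibbleReservesSpecific} essentially verbatim to produce a $K_q^r$-packing of $J \cup X$ covering $J$, and the omni-absorber property of $A$ extends this to a $K_q^r$-decomposition of $G$. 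The main obstacle is the omni-absorber step, and in particular the embedding of fake-edges, which are not directly enumerated by (ii); this should be resolvable either by reinterpreting a fake-edge as a short sequence of rooted booster/hinge layers (so that its embeddings are again counted by (ii)), or by observing that (i) already guarantees sufficient edge-density for the low rooted degeneracy of fake-edges to permit a greedy embedding as in Lemma~\ref{lemma:DegeneracyEmbedding}. A secondary point of care is that the regularity boost must convert the $\eps$-almost regularity supplied by (i) into the $\pm n^{-1/3}$ regularity demanded by Lemma~\ref{cor:NibbleReservesSpecific}, which is what forces the choice of $\eps$ sufficiently small in terms of $C$ in the quantification of the theorem.
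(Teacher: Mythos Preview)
The paper does not actually prove this theorem: it appears in Section~7 (Further Directions) accompanied only by the assertion that ``it is not too hard in light of the theory developed in~[DPI] and~[EMeetsNW] to modify the proofs in this paper'' to obtain it, followed immediately by ``We did not pursue the explicit derivation of such a theorem in this paper.'' Your sketch is therefore already more detailed than anything the paper provides, and the overall four-step plan you outline is precisely what the paper is gesturing at.

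That said, two of the steps in your sketch have genuine gaps that go beyond bookkeeping. First, the fake-edge embedding: neither of your proposed fixes clearly works. Hypothesis~(i) gives global edge-density and per-edge clique counts, but not the minimum $(r-1)$-degree condition that Lemma~\ref{lemma:DegeneracyEmbedding} needs for greedy embedding; and a fake-edge is not obviously decomposable into booster/hinge layers, since an anti-edge on an $r$-set $T$ requires finding $q-r$ vertices whose $r$-subsets with $T$ (except $T$ itself) all lie in $G$, and hypothesis~(ii) supplies boosters rooted at $q$-sets and hinges rooted at $(2q-r)$-sets, not this kind of rooted near-clique. This is plausibly what the authors have in mind when they decline to ``pursue the explicit derivation,'' and a clean statement may in fact require an additional fake-edge-type hypothesis alongside~(ii). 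Second, your regularity-boost step does not actually achieve the $\pm n^{-1/3}$ regularity that Lemma~\ref{cor:NibbleReservesSpecific} demands: sampling from an $\varepsilon$-almost fractional decomposition gives expected degree spread $\Theta(\varepsilon)\cdot d$ across edges, and Chernoff only controls deviation from expectation, so no constant choice of $\varepsilon$ yields polynomial regularity. The seeded hypothesis in~(i) is almost certainly there to enable an edge-gadget boost (as in~[DPI]) that corrects the $\varepsilon$-almost decomposition to an exact one before sampling---and the relevant edge-gadgets are supplied by the hinges of~(ii)---but you do not invoke this mechanism.
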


We did not pursue the explicit derivation of such a theorem in this paper - in part, as its main application would be in generating progress towards the hypergraph version of Nash-Williams' Conjecture, as accomplished by our main results. The other reason is that the second author and Delcourt in a forthcoming work prove a more general version of the above theorem that also works for $F$-decompositions for any hypergraph $F$ (thus adapting the method of refined absorption the case of general $F$). Hence another future direction would be to utilize the method of refined absorption and the surrounding theory to provide bounds on the (fractional) decomposition threshold for a general hypergraph $F$ (see Yuster~\cite{Yuster12} and the work of Glock, K\"uhn, Lo, Montgomery and Osthus~\cite{GKLMO19} for such results about graphs). 

\section*{Acknowledgements}
The authors would like to thank Michelle Delcourt and Thomas Lesgourgues for many helpful comments and discussions during the preparation of this manuscript. 

\bibliographystyle{plain}
\bibliography{bibliography}

\begin{thebibliography}{10}

\bibitem{A94}
Noga Alon.
\newblock Probabilistic methods in coloring and decomposition problems.
\newblock {\em Discrete Mathematics}, 127(1-3):31--46, 1994.

\bibitem{AS16}
Noga Alon and Joel~H. Spencer.
\newblock {\em The Probabilistic Method}.
\newblock John Wiley \& Sons, 2016.

\bibitem{BKLMOFracHGNW}
Ben Barber, Daniela K{\"u}hn, Allan Lo, Richard Montgomery, and Deryk Osthus.
\newblock Fractional clique decompositions of dense graphs and hypergraphs.
\newblock {\em Journal of Combinatorial Theory, Series B}, 127:148--186, 2017.

\bibitem{BKLOr=3}
Ben Barber, Daniela K{\"u}hn, Allan Lo, and Deryk Osthus.
\newblock Edge-decompositions of graphs with high minimum degree.
\newblock {\em Advances in Mathematics}, 288:337--385, January 2016.

\bibitem{C1841}
Augustin~Louis Cauchy.
\newblock {\em Exercices d'analyse et de physique math{\'e}matique}, volume~2.
\newblock Bachelier, imprimeur-libraire, 1841.

\bibitem{C52}
Pafnuty~Lvovich Chebyshev.
\newblock M{\'e}moire sur les nombres premiers.
\newblock {\em J. Math. Pures Appl.}, 17:366--390, 1852.

\bibitem{CPS23}
David Conlon, Sim{\'o}n Piga, and Bjarne Sch{\"u}lke.
\newblock Simplicial {T}ur{\'a}n problems.
\newblock {\em ArXiv preprint arXiv:2310.01822}, 2023.

\bibitem{CH63}
Kereszt{\'e}ly Corr{\'a}di and Andr{\'a}s Hajnal.
\newblock On the maximal number of independent circuits in a graph.
\newblock {\em Acta Mathematica Hungarica}, 14(3-4):423--439, 1963.

\bibitem{dC83}
Dominique De~Caen.
\newblock Extension of a theorem of {M}oon and {M}oser on complete subgraphs.
\newblock {\em Ars Combinatoria}, 16:5--10, 1983.

\bibitem{DHLP25Counter}
Michelle Delcourt, Cicely Henderson, Thomas Lesgourgues, and Luke Postle.
\newblock Beyond {N}ash-{W}illiams: Counterexamples to clique decomposition thresholds for all cliques larger than triangles.
\newblock {\em ArXiv preprint arXiv:2508.20819}, 2025.

\bibitem{EMeetsNW}
Michelle Delcourt, Cicely Henderson, Thomas Lesgourgues, and Luke Postle.
\newblock Erd{\H o}s meets {N}ash-{W}illiams.
\newblock {\em ArXiv preprint arXiv:2507.23624}, 2025.

\bibitem{RefIII}
Michelle Delcourt, Tom Kelly, and Luke Postle.
\newblock Clique decompositions in random graphs via refined absorption.
\newblock {\em ArXiv preprint arXiv:2402.17857}, 2024.

\bibitem{DKPIV}
Michelle Delcourt, Tom Kelly, and Luke Postle.
\newblock Thresholds for $(n,q,2)$-{S}teiner {S}ystems via {R}efined {A}bsorption.
\newblock {\em ArXiv preprint arXiv:2402.17858}, 2024.

\bibitem{DKP25}
Michelle Delcourt, Tom Kelly, and Luke Postle.
\newblock A short proof of the existence of ${K}_q^r$-absorbers.
\newblock {\em ArXiv preprint arXiv:2412.09710}, 2025.

\bibitem{DLP25}
Michelle Delcourt, Thomas Lesgourgues, and Luke Postle.
\newblock Fractional clique decompositions of dense hypergraphs.
\newblock {\em ArXiv preprint arXiv:2510.07225}, 2025.

\bibitem{DELCOURT2021382}
Michelle Delcourt and Luke Postle.
\newblock Progress towards {Nash-Williams}' conjecture on triangle decompositions.
\newblock {\em Journal of Combinatorial Theory, Series B}, 146:382--416, 2021.

\bibitem{DP24}
Michelle Delcourt and Luke Postle.
\newblock Finding an almost perfect matching in a hypergraph avoiding forbidden submatchings.
\newblock {\em ArXiv preprint arXiv:2204.08981}, 2024.

\bibitem{DPII}
Michelle Delcourt and Luke Postle.
\newblock Proof of the {H}igh {G}irth {E}xistence {C}onjecture via {R}efined {A}bsorption.
\newblock {\em ArXiv preprint arXiv:2402.17856}, 2024.

\bibitem{DPI}
Michelle Delcourt and Luke Postle.
\newblock {R}efined {A}bsorption: {A} {N}ew {P}roof of the {E}xistence {C}onjecture.
\newblock {\em arXiv preprint arXiv:2402.17855}, 2024.

\bibitem{DiracThm}
Gabriel~Andrew Dirac.
\newblock Some theorems on abstract graphs.
\newblock {\em Proceedings of the London Mathematical Society}, 3(1):69--81, 1952.

\bibitem{E64}
Paul Erd\H{o}s.
\newblock On extremal problems of graphs and generalized graphs.
\newblock {\em Israel Journal of Mathematics}, 2(3):183--190, 1964.

\bibitem{ApproxExistenceConj}
Paul Erd\H{o}s and Haim Hanani.
\newblock On a limit theorem in combinatorial analysis.
\newblock {\em Publ. Math. Debrecen}, 10:10--13, 1963.

\bibitem{ES82}
Paul Erd\H{o}s and M.~Simonovits.
\newblock Supersaturated graphs and hypergraphs.
\newblock {\em Combinatorica}, 3:181--192, 1982.

\bibitem{ES66}
Paul Erd\H{o}s and Mikl{\'o}s Simonovits.
\newblock A limit theorem in graph theory.
\newblock {\em Studies Scientiarum Mattiematiearum Hungariea}, 1(51-57):51, 1966.

\bibitem{ES46}
Paul Erd\H{o}s and Arthur~H. Stone.
\newblock On the structure of linear graphs.
\newblock {\em Bulletin of the American Mathematical Society}, 52(12):1087--1091, 1946.

\bibitem{FB22}
Asaf Ferber and Matthew Kwan.
\newblock Dirac-type theorems in random hypergraphs.
\newblock {\em Journal of Combinatorial Theory, Series B}, 155:318--357, 2022.

\bibitem{F83}
Peter Frankl.
\newblock On the trace of finite sets.
\newblock {\em Journal of Combinatorial Theory, Series A}, 34(1):41--45, 1983.

\bibitem{GKLMO19}
Stefan Glock, Daniela K{\"u}hn, Allan Lo, Richard Montgomery, and Deryk Osthus.
\newblock On the decomposition threshold of a given graph.
\newblock {\em Journal of Combinatorial Theory, Series B}, 139:47--127, 2019.

\bibitem{GKLO16}
Stefan Glock, Daniela K{\"u}hn, Allan Lo, and Deryk Osthus.
\newblock The existence of designs via iterative absorption: {H}ypergraph ${F}$-designs for arbitrary ${F}$.
\newblock {\em Memoirs of the American Mathematical Society}, 284(1406), 2023.

\bibitem{GKO20Survey}
Stefan Glock, Daniela Kühn, and Deryk Osthus.
\newblock {\em Extremal aspects of graph and hypergraph decomposition problems}, page 235–266.
\newblock London Mathematical Society Lecture Note Series. Cambridge University Press, 2021.

\bibitem{GJ73}
Jack~E. Graver and WB~Jurkat.
\newblock The module structure of integral designs.
\newblock {\em Journal of Combinatorial Theory, Series A}, 15(1):75--90, 1973.

\bibitem{gustavsson1991decompositions}
Torbj{\"o}rn Gustavsson.
\newblock {\em Decompositions of large graphs and digraphs with high minimum degree}.
\newblock PhD thesis, University of Stockholm, 1991.

\bibitem{HS70}
Andr{\'a}s Hajnal and Endre Szemer\'edi.
\newblock Proof of a conjecture of {P}. {E}rd{\H{o}}s.
\newblock {\em Combinatorial Theory and its Applications}, II:601--603, 1970.

\bibitem{H01}
Penny~E Haxell.
\newblock A note on vertex list colouring.
\newblock {\em Combinatorics, Probability and Computing}, 10(4):345--347, 2001.

\bibitem{JL14}
J.~Travis Johnston and Linyuan Lu.
\newblock Tur{\'a}n problems on non-uniform hypergraphs.
\newblock {\em The Electronic Journal of Combinatorics}, 21(4), 2014.

\bibitem{K11}
Peter Keevash.
\newblock {Hypergraph Tur{\'a}n problems}.
\newblock {\em Surveys in combinatorics}, 392:83--140, 2011.

\bibitem{keevashEC}
Peter Keevash.
\newblock The existence of designs.
\newblock {\em ArXiv preprint arXiv:1401.3665}, 2014.

\bibitem{KeevashShortEC}
Peter Keevash.
\newblock A short proof of the existence of designs.
\newblock {\em ArXiv preprint arXiv:2411.18291}, 2024.

\bibitem{KM15}
Peter Keevash and Richard Mycroft.
\newblock A geometric theory for hypergraph matching.
\newblock {\em Memoirs of the American Mathematical Society}, 233(1098), 2015.

\bibitem{KirkmanSTS}
T.~P. Kirkman.
\newblock On a problem in combinatorics.
\newblock {\em The Cambridge and Dublin Mathematical Journal}, 2:191 --204, 1847.

\bibitem{OnIndepSets}
Alexandr Kostochka, Dhruv Mubayi, and Jacques Verstr{\"e}te.
\newblock On independent sets in hypergraphs.
\newblock {\em Random Structures \& Algorithms}, 44(2):224--239, 2014.

\bibitem{lang2023tiling}
Richard Lang.
\newblock Tiling dense hypergraphs.
\newblock {\em Arxiv preprint arXiv:2308.12281}, 2023.

\bibitem{LS24}
Richard Lang and Nicol{\'a}s Sanhueza-Matamala.
\newblock A hypergraph bandwidth theorem.
\newblock {\em ArXiv preprint arXiv:2412.14891}, 2024.

\bibitem{LZ18}
Allan Lo and Yi~Zhao.
\newblock Codegree {T}ur{\'a}n density of complete r-uniform hypergraphs.
\newblock {\em SIAM Journal on Discrete Mathematics}, 32(2):1154--1158, 2018.

\bibitem{LS07}
Linyuan Lu and L{\'a}szl{\'o} Sz{\'e}kely.
\newblock Using {L}ov{\'a}sz {L}ocal {L}emma in the space of random injections.
\newblock {\em The Electronic Journal of Combinatorics}, pages R63--R63, 2007.

\bibitem{mantel}
Willem Mantel.
\newblock Problem 28 (solution by {Gouwentak, Mantel, Teixeira de Mattes, Schuh and Wythoff}).
\newblock {\em Wiskundige Opgaven}, 10:60--61, 1907.

\bibitem{MontgomeryCliqueDecomps}
Richard Montgomery.
\newblock Fractional clique decompositions of dense graphs.
\newblock {\em Random Structures \& Algorithms}, 54(4):779--796, 2019.

\bibitem{MM65}
John~W Moon and Leo Moser.
\newblock On cliques in graphs.
\newblock {\em Israel journal of Mathematics}, 3(1):23--28, 1965.

\bibitem{NWConj}
C.S.J. Nash-Williams.
\newblock An unsolved problem concerning decomposition of graphs into triangles.
\newblock {\em Combinatorial Theory and its Applications}, III:1179 -- 1183, 1970.

\bibitem{PSS21}
Mat{\'\i}as Pavez-Sign{\'e}, Nicol{\'a}s Sanhueza-Matamala, and Maya Stein.
\newblock Towards a hypergraph version of the {P}{\'o}sa-{S}eymour conjecture.
\newblock {\em ArXiv preprint arXiv:2110.09373}, 2021.

\bibitem{P25Survey}
Luke Postle.
\newblock Refined absorption: A new proof of the existence conjecture and its applications to extremal and probabilistic design theory.
\newblock {\em ArXiv preprint arXiv:2510.19978}, 2025.

\bibitem{R54}
Richard Rado.
\newblock Direct decomposition of partitions.
\newblock {\em Journal of the London Mathematical Society}, 1(1):71--83, 1954.

\bibitem{Nibble}
Vojt{\v e}ch R{\"o}dl.
\newblock On a packing and covering problem.
\newblock {\em European Journal of Combinatorics}, 6(1):69--78, 1985.

\bibitem{S81}
A.~F. Sidorenko.
\newblock Systems of sets that have the {T}-property.
\newblock {\em Moscow University Mathematics Bulletin}, 36:22--26, 1981.

\bibitem{S72}
Joel Spencer.
\newblock Tur{\'a}n's theorem for k-graphs.
\newblock {\em Discrete Mathematics}, 2(2):183--186, 1972.

\bibitem{turan1941extremal}
P{\'a}l Tur{\'a}n.
\newblock Eine {Extremalaufgabe} aus der {Graphentheorie}.
\newblock {\em Matematikai és. Fizikai Lapok}, 48:436--452, 1941.

\bibitem{WilsonCasesI}
Richard~M Wilson.
\newblock {An existence theory for pairwise balanced designs I. Composition theorems and morphisms}.
\newblock {\em Journal of Combinatorial Theory, Series A}, 13(2):220--245, 1972.

\bibitem{WilsonCasesII}
Richard~M Wilson.
\newblock {An existence theory for pairwise balanced designs II. The structure of PBD-closed sets and the existence conjectures}.
\newblock {\em Journal of Combinatorial Theory, Series A}, 13(2):246--273, 1972.

\bibitem{W73}
Richard~M. Wilson.
\newblock The necessary conditions for $t$-designs are sufficient for something.
\newblock {\em Utilitas Math}, 4:207--215, 1973.

\bibitem{WilsonCasesIII}
Richard~M Wilson.
\newblock {An existence theory for pairwise balanced designs, III: Proof of the existence conjecture}.
\newblock {\em Journal of Combinatorial Theory, Series A}, 18(1):71--79, 1975.

\bibitem{Yuster12}
Raphael Yuster.
\newblock ${H}$-packing of $k$-chromatic graphs.
\newblock {\em Mosc. J. Comb. Number Theory}, 2(1):73--88, 2012.

\end{thebibliography}

\appendix

\section{Proof of the Edge-Density Inheritance Lemma}

For completeness, here we include a proof of~\cref{lem:e(G[S])large}, using the method suggested by Keevash in~\cite{K11}.
In the following argument, we use ordered $k$-sets to simplify the calculations related to the second-moment method.

\begin{lateproof}{lem:e(G[S])large}
Let $S = (v_1, \ldots, v_k)$ where for each $i \in [k]$, the vertex $v_i$ is chosen uniformly at random from $V(G)$. First, we bound the probability that $S$ is not a set of $k$ distinct vertices.
To that end, observe that
$$\frac{n^k}{\binom{n}{k}\cdot k!} = \prod_{i=1}^{k-1} \left(1 + \frac{i}{n-i}\right) \leq \exp\left( \sum_{i=1}^{k-1}~\frac{2i}{n}\right) \leq \exp \left(\frac{2k^2}{n}\right)\leq e,$$
where we used that $n \ge 2k^2$ for both inequalities. 
Hence the probability that all the $v_i$ are distinct (equivalently, that~$S$ corresponds to an ordered set) is at least $1/e$.  

For each $T \in \binom{[k]}{r}$, let $X_T$ be the indicator variable that is 1 when $\{v_i: i\in T\} \in E(G)$ and 0 otherwise. Observe that $\Pr[X_T = 1]= \frac{e(G)\cdot r!}{n^r}$. Let $X := \sum_{T \in \binom{[k]}{r}} X_T$. We bound the variance of $X$, ${\rm Var}[X]$, as follows:
\begin{align*}
    {\rm Var}[X] \le |\E[X^2] - \E[X]^2| & \leq \sum_{(S, T) \in \binom{[k]}{r}^2}\Big|\Pr[X_T=X_S = 1] - \Pr[X_T=1]\cdot \Pr[X_S = 1]\Big| \\
     & \leq \sum_{\substack{(S, T) \in \binom{[k]}{r}^2, \\ S \cap T = \emptyset}} 0 + \sum_{\substack{(S, T) \in \binom{[k]}{r}^2, \\ S \cap T \neq \emptyset}} 1 \\
     & \leq~\binom{k}{r}\cdot r \cdot \binom{k-1}{r-1},
\end{align*}
where in the first sum of the second line, the value $0$ follows since $X_T$ and $X_S$ are independent; in the second sum on the second line, the value $1$ is an upper bound on the difference of two probabilities; and the final value is an upper bound on the number of $S$ and $T$ that intersect.

By Chebyshev's inequality, we find that 

$$\Pr\left[|X - \E[X]| \geq  \frac{\alpha - \beta}{2}\binom{k}{r}\right] \leq  \frac{ 4 \cdot {\rm Var}[X]}{(\alpha - \beta)^2\binom{k}{r}^2} \leq \frac{4 \cdot r^2}{(\alpha - \beta)^2k}.$$

By linearity of expectation, $\E[X] = \binom{k}{r} \cdot \frac{e(G)\cdot r!}{n^r}$. Since $n$ is large enough compared to $r$ and $\alpha > \beta$, we have that $e(G)\ge \alpha \binom{n}{r} \ge (\frac{\alpha + \beta}{2})\cdot \frac{n^r}{r!}$. Thus we find that $\E[X] \geq (\frac{\alpha + \beta}{2})\binom{k}{r}$. Therefore the probability that $e(G[S]) \leq \beta\binom{k}{r}$ conditioned on the $v_i$ being distinct is at most $\frac{4\cdot r^2}{(\alpha - \beta)^2k} \cdot e$. Since $k$ is large enough, we have $\sqrt{k} \ge \frac{4 \cdot e\cdot r^2}{(\alpha-\beta)^2}$. Thus, the number of $k$-subsets $S$ of $V(G)$ with $e(G[S]) \leq \beta\binom{k}{r}$ is at most $\frac{4\cdot e\cdot r^2}{(\alpha - \beta)^2 k} \cdot \binom{n}{k} \le \frac{1}{\sqrt{k}} \cdot \binom{n}{k}$, as desired.
\end{lateproof}

\section{Proof of the General Embedding Lemma}

Here we provide a ``slotting''-style proof of the General Embedding Lemma,~\cref{lem:EmbedGeneral}. First, we require the following Bipartite Finishing Lemma, which is a simplified version of Lemma~3.4 in~\cite{DP24}. 

\begin{lem}[Bipartite Finishing Lemma \cite{DP24}]\label{lem:BipFinish}
    For every integer $r \ge 1$, there exists $D_0$ such that for all integers $D \ge D_0$ the following holds:
If $G = (A, B)$ is an bipartite $r$-graph such that $d_G(a) \ge 8rD$ for each $a \in A$ and $d_G(b) \le D$ for each $b \in B$, then there exists an $A$-perfect matching of $G.$ 
\end{lem}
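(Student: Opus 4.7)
The plan is to prove the lemma via a probabilistic random-greedy construction, iterated until $A$ is fully matched. The key structural observation (from double counting) is that the degree conditions force $|B| \ge 8r(r-1)|A|$, so $B$-vertices are abundant relative to $A$-vertices, and more refinedly, every $S \subseteq A$ satisfies $|N_G(S) \cap B| \ge 8r(r-1)|S|$.

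The first step is a one-shot random matching: for each $a \in A$, independently pick an edge $f_a$ incident to $a$, uniformly at random. Call $a \in A$ \emph{clean} if $f_a \cap f_{a'} \cap B = \emptyset$ for every $a' \ne a$. Using double counting, for any fixed $b \in B$,
\[\sum_{a' \in A} \Pr[b \in f_{a'}] \;=\; \sum_{a' \in A} \frac{d_G(a',b)}{d_G(a')} \;\le\; \frac{d_G(b)}{8rD} \;\le\; \frac{1}{8r},\]
where we used $\sum_{a'} d_G(a',b) = d_G(b) \le D$ together with $d_G(a') \ge 8rD$. Since each edge $e \ni a$ has exactly $r-1$ vertices in $B$, a union bound gives
\[\Pr[a \text{ not clean}] \;\le\; \sum_{b \in f_a \cap B}\ \sum_{a' \ne a} \Pr[b \in f_{a'}] \;\le\; (r-1)\cdot \frac{1}{8r} \;=\; \frac{r-1}{8r}.\]
By linearity of expectation, at least a $\tfrac{7r+1}{8r}$-fraction of $A$ is clean in expectation, and fixing such an outcome yields a partial matching $M_1$ covering at least a constant fraction of $A$.

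The next step is to iterate on the residual hypergraph $G_1 := G - V(M_1)$. An analogous expectation calculation shows that each unmatched $a \in A$ loses in expectation at most a $\tfrac{r-1}{8r}$-fraction of its edges when the matched $B$-vertices are removed. By Chernoff-type concentration---applicable precisely because $D \ge D_0$ is assumed large---with high probability each unmatched $a$ retains at least $\tfrac{1}{2}d_G(a) \ge 4rD$ edges, while every remaining $b$ still satisfies $d_{G_1}(b) \le D$. Repeating this argument $O(\log |A|)$ times shrinks the unmatched set geometrically, and a final deterministic greedy pass (leveraging the remaining slack in the degree condition) completes an $A$-perfect matching.

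The main obstacle is controlling the degradation of the ratio $d(a)/d(b)$ across rounds: the initial ratio is $\ge 8r$, but each iteration shrinks it by a constant factor, and after enough rounds the bound could become too weak to sustain another random-matching step. The plan is to quantify this carefully, either by replacing $8r$ with a large enough constant so that $O(\log |A|)$ rounds still leave adequate slack, or (more robustly) by combining a bounded number of random-matching rounds with a deterministic finishing step that handles a residual sub-constant fraction of $A$ using the Hall-type lower bound $|N_G(S) \cap B| \ge 8r(r-1)|S|$. The hypothesis $D \ge D_0(r)$ is used precisely to ensure that the Chernoff concentration and union bound over $|A|$ vertices and $O(\log |A|)$ rounds go through.
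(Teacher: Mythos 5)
The paper does not actually prove this lemma; it simply cites Alon~\cite{A94}, whose proof uses the Lov\'asz Local Lemma, and Haxell~\cite{H01}, whose proof goes through the Aharoni--Haxell topological Hall theorem. So what follows compares your proposal against what those proofs need, not against an argument in the paper.

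Your opening expectation calculation is fine, but the plan to pass from it to a full $A$-perfect matching has two genuine gaps. First, the union bound over $|A|$ vertices and $O(\log|A|)$ rounds cannot work here: the lemma allows $D_0$ to depend only on $r$, while $|A|$ is unbounded. Any Chernoff-type estimate for the residual degree of an unmatched $a$ has failure probability on the order of $e^{-cD}$, so a union bound requires $|A|\log|A|\lesssim e^{cD}$, which is not given. This is precisely why Alon uses the Local Lemma --- the bad event for $a$ depends only on choices $f_{a'}$ with $N(a)\cap N(a')\cap B\neq\emptyset$, so the number of dependencies is local --- and even then one must first restrict each $a$ to a subset of $\Theta(rD)$ of its incident edges so that this dependency degree is a function of $r$ and $D$ alone, rather than of $d_G(a)$. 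None of that appears in your write-up, and without it the ``high probability over all $a$'' claim is unjustified.

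Second, the proposed deterministic finish ``using the Hall-type lower bound $|N_G(S)\cap B|\ge 8r(r-1)|S|$'' does not work for $r\ge 3$: neighborhood expansion in a bipartite $r$-graph, however strong, does not imply an $A$-saturating matching (two $A$-vertices can each have huge neighborhoods while every pair of their edges intersects in a single common $B$-vertex). The condition that actually suffices --- and is what Haxell uses --- is a vertex-cover condition: for every $S\subseteq A$, no small set of $B$-vertices covers all edges incident to $S$. Your degree hypotheses do yield that cover bound (at least $\tfrac{8rD|S|}{D}=8r|S|$ $B$-vertices are needed), but deducing a matching from it requires the topological Hall theorem, which is a different and substantially deeper tool than Hall's theorem. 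If you intend to finish that way, you need to invoke Aharoni--Haxell explicitly; if you intend the Local Lemma route, you need to set up the restricted choices and bound the dependency degree. As written, the proposal names tools (Chernoff $+$ union bound $+$ Hall) that cannot carry the argument, so the gap is real rather than cosmetic.
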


Lemma~\ref{lem:BipFinish} is implied by a result of Alon~\cite{A94}, which shows that $d_G(a) \ge 2erD$ suffices, and a result of Haxell~\cite{H01}, who improved this to $d_G(a) \ge 2rD.$

We are now prepared to prove~\cref{lem:EmbedGeneral}.

\begin{proof}[Proof of Lemma~\ref{lem:EmbedGeneral}]
We choose $C'$ large enough as needed throughout the proof. Let $T:= \frac{C'}{C}\cdot \Delta(J)$. For each $H\in \mc{H}$, let $\Phi_H$ be the set of embeddings of $W_H$ into $G\setminus (E(J)\setminus E(H))$. For each $\phi \in \Phi_H$, define 
$$S(\phi):= \bigg\{ S\in \binom{V(G)}{r-1}: \exists e\in E(\phi(W_H))\setminus E(H) \textrm{ with } S\subseteq e\bigg\},$$ that is, the set of $(r-1)$-subsets of $V(G)$ contained in edges of the embedding. Let $b_{\rm max} := C$ and note that $b_{\max} \ge v(W_H)-v(H)$ for all $H\in \mc{H}$ since $\mc{W}$ is $C$-bounded. Similarly let $s_{\max} := Cr$ and note that $s_{\max} \ge |S(\phi)|$ for all $H\in \mc{H}$ and $\phi \in \Phi_H$ as $\mc{W}$ is $C$-bounded.

We define an auxiliary bipartite graph $X=(A,B)$ where $A:= \mc{H}$ and $B:= E(G) \cup \left(\binom{V(G)}{r-1} \times [T]\right)$ as follows:
For each $H\in \mc{H}$, $\phi \in \Phi_H$ and $a=(a_1,\ldots, a_{|S(\phi)|})\in T^{|S(\phi)|}$, define $$e_{H,\phi,a} := \{H\} \cup E(\phi(W_H)) \cup \bigcup_{i\in [|S(\phi)|]} \{(S,a_i) : S \textrm{ is the $i$th element of } S(\phi)\}.$$ 
We let 
$$E(X) := \bigcup_{H\in \mc{H}}~\bigcup_{\phi\in \Phi_H} \bigg\{ e_{H,\phi,a} \times n^{b_{\max}-|V(W_H)\setminus V(H)|}\cdot T^{s_{\max}-|S(\phi)|}: a \in T^{|S(\phi)|} \bigg\}$$
where the times symbols denotes that the edge $e_{H,\phi,a}$ has multiplicity $n^{b_{\max}-|V(W_H)\setminus V(H)|}\cdot T^{s_{\max}-|S(\phi)|}$. Let $r':= C(1+r)+1$. Note $J$ is $r'$-bounded.

We proceed to show there exists an $A$-perfect matching $M$ of $J$ which yields an embedding $\phi = (M[H]: H\in \mc{H})$ of $\mc{W}$ into $G$ where the embedding is edge-disjoint (given $E(G)\subseteq B$ and the construction of $E(X)$) and where each $(r-1)$-subset of $V(G)$ is in at most $T\cdot C = C'\cdot \Delta(J)$ edges (since $\binom{V(G)}{r-1}\times [T]\subseteq B$ and given the construction of $E(X)$), as desired.

To that end, it suffices to use the Finishing Lemma,~\cref{lem:BipFinish}. Let $D':= \frac{1}{8r'} \cdot \gamma \cdot n^{b_{\max}}\cdot T^{s_{\max}}$. Namely, we show $d_X(a)\ge 8rD'$ for all $a\in A$ and $d_X(b)\le D'$ for all $b\in B$.

First we prove the lower bounds for degrees in $A$. By assumption, $|\Phi_H|\ge \gamma\cdot  n^{|V(W_H)\setminus V(H)|}$ for each $H\in \mc{H}$. Hence for each $H\in A=\mc{H}$, we find accounting for multiplicity and slotting that 
$$d_X(H) \ge \sum_{\phi \in \Phi_H} T^{|S(\phi)|}\cdot n^{b_{\max}-|V(W_H)\setminus V(H)|}\cdot T^{s_{\max}-|S(\phi)|} = |\Phi_H|\cdot n^{b_{\max}-|V(W_H)\setminus V(H)|}\cdot T^{s_{\max}} \ge \gamma \cdot n^{b_{\max}}\cdot T^{s_{\max}}.$$

Now we prove the upper bound for degrees in $B$. First consider the case $b=f\in E(G)$. First fix a subset $R\subseteq V(f)$, for which there are at most $2^r$ choices. Then fix an edge $e\in E(J)$ containing $R$, for which there are at most $n^{r-|R|-1}\cdot \Delta(J)$ choices. Note that such an $e$ exists because the system is edge-intersecting. Then fix $H\in \mc{H}$ containing $f$, for which there are most $C$ choices since $\mc{H}$ is $C$-refined. Then there are at most $C!\cdot n^{|V(W_H)\setminus V(H)| - |R|}$ choices of $\phi\in \Phi_H$ such that $f\in E(\phi(W_H))$. Then there are $T^{|S(\phi)|}$ slot choices and multiplicity $n^{b_{\max}-|V(W_H)\setminus V(H)|}\cdot T^{s_{\max}-|S(\phi)|}$. Altogether, then we find that
\begin{align*}
d_X(f) &\le 2^r \cdot n^{r-|R|-1}\cdot \Delta(J) \cdot C\cdot C!\cdot n^{|V(W_H)\setminus V(H)| - |R|} \cdot n^{b_{\max}-|V(W_H)\setminus V(H)|}\cdot T^{s_{\max}-|S(\phi)|}  \\
&= 2^r\cdot C\cdot C! \cdot n^{b_{\max}-1}\cdot T^{s_{\max}} \cdot \Delta(J) \le D'   
\end{align*}
as desired since $\Delta(J)$ is small enough, namely $\Delta(J)\le n/C'$ and $C' \ge 8r'\cdot 2^r\cdot C\cdot C! / \gamma$.

Finally we consider the case $b=(S,i)$ where $S\in \binom{V(G)}{r-1}$ and $i\in [T]$. Then we choose an $f\in E(G)$ with $S\subseteq V(f)$ for which there are $n$ choices; then we choose $R,e,H$ as above. Then we note there are only $T^{|S(\phi)|-1}$ slot choices. Accounting for multiplicity then, we find that 
$$d_X((S,i)) \le 2^r\cdot C\cdot C! \cdot n^{b_{\max}}\cdot T^{s_{\max}-1} \cdot \Delta(J) \le D'$$
as desired since $T=\frac{C'}{C}\cdot \Delta(J)$ and $C'\ge 8r'\cdot 2^r\cdot C^2\cdot C! / \gamma$ is large enough.
\end{proof}

\section{Proof of the Fixed Fractional Decomposition Lemma}

For completeness, here we include the proof of the Fixed Fractional Decomposition Lemma,~\cref{lem:Fixed}. 

\begin{lateproof}{lem:Fixed}
    Note that as $n$ is large enough, we have that $\varepsilon n\geq2$ and hence $\delta(G-e)\geq(\delta^*_F+\varepsilon/2)n$ for every $e\in G$. By the minimum degree of $G$, as $n$ is large enough, there exists a fractional $F$-decomposition $\Phi_0$ of $G$. Similarly, for every $e\in G$, there exists a fractional $F$-decomposition  $\Phi_{e}$ of $G-e$.

For each $e\in G$, let $\lambda_e := e(G)\Big[\varphi(e) - \big(1-\frac{1}{e(G)}\big)\Big]$. Note that since $\varphi(e) \in [1-\frac{1}{e(G)},1]$, we have that $\lambda_e\in [0,1]$. We define 
$$\Phi_e' := \lambda_e \cdot \Phi_0 + (1-\lambda_e) \cdot \Phi_e.$$ 
Note that $\Phi'_e$ is fractional $F$-packing of $G$ such that $\partial \Phi'_e(e) = \lambda_e$ and $\partial \Phi'_e(f) = 1$ for each $f\in G-e$. 
Finally we set
$$\Phi := \frac{1}{e(G)} \cdot \sum_{e\in G} \Phi'_e.$$
Observe that $\Phi$ is a fractional $F$-packing of $G$ (since it is the average of fractional $F$-packings of $G$). Furthermore for each $f\in G$, we have that
\begin{align*}
\partial \Phi(f) = \frac{1}{e(G)} \cdot \sum_{e\in G} \partial \Phi'_e(f) = \frac{1}{e(G)} \cdot \left( e(G)-1 + \lambda_f\right) = 1 - \frac{1}{e(G)} + \frac{1}{e(G)} \cdot \lambda_f = \varphi(f),
\end{align*}
as desired.
\end{lateproof}

\end{document}